\newtheorem{thm}{Theorem}[section]
\newtheorem{lem}[thm]{Lemma}
\newtheorem{cor}[thm]{Corollary}
\theoremstyle{definition}
\newtheorem{ques}[thm]{Question}
\newtheorem{rem}[thm]{Remark}
\newcommand{\bpict}{\begin{picture}}
\newcommand{\epict}{\end{picture}}
\newtheorem{rems}[thm]{Remarks}
\numberwithin{equation}{thm}
\newcommand{\N}{{\mathbb N}}
\newcommand{\Z}{{\mathbb Z}}
\newcommand{\Ext}{{\text{\rm Ext}}}
\newcommand{\Hom}{\text{\rm Hom}}
\newcommand{\ch}{\operatorname{ch}}
\newcommand{\ind}{\operatorname{ind}}
\newcommand{\res}{\operatorname{res}}
\newcommand{\Ker}{\operatorname{Ker}}
\newcommand{\blist}{\begin{list}{\rom{(\roman{enumi})}}{\setlength
{\leftmarg in}{0em} \setlength{\itemindent}{7ex}
\setlength{\labelsep}{2ex}\setlength{\listparindent}{\parindent}
\usecounter{enumi}}}
\newcommand{\elist}{\end{list}}
\newcommand{\opH}{\text{\rm H}}
\newcommand{\gen}{\text{\rm gen}}
\newcommand{\Ind}{\text{\rm Ind}}
\newcommand{\Map}{\text{\rm Map}}
\newcommand{\mF}{{\mathfrak F}}
\newcommand{\sG}{{\mathcal G}}
\begin{document}
\begin{abstract}

The idea that the cohomology of finite groups might be fruitfully approached via the cohomology 
of  ambient semisimple algebraic groups was first shown to be viable in the   papers \cite{CPS75} and 
\cite{CPSK77}. The second paper introduced, through a limiting process, the notion of generic cohomology, as an intermediary between finite Chevalley group and algebraic group cohomology. 

The present paper shows that, for irreducible modules as coefficients, the limits can be eliminated in all but finitely many cases.
These exceptional cases depend
only on the root system and cohomological degree. In fact, we show that, for sufficiently large $r$, depending only on
the root system and $m$, and not on the prime $p$ or the irreducible module $L$, there are isomorphisms
$\opH^m(G(p^r),L)\cong\opH^m(G(p^r), L')\cong \opH^m_\gen(G,L')\cong \opH^m(G,L')$, where
the subscript ``gen" refers to generic cohomology and $L'$ is a constructibly determined irreducible ``shift" of the (arbitrary) irreducible
module $L$ for the finite Chevalley group $G(p^r)$. By a famous theorem of Steinberg, both $L$ and $L'$ extend to irreducible modules
for the ambient algebraic group $G$ with $p^r$-restricted highest weights. This leads to the notion of a module or weight being ``shifted $m$-generic,"
and thus to the title of this paper.  Our approach is based on questions raised by the third author in \cite{SteSL3}, which we answer here in the cohomology cases. We obtain many additional results, often with formulations in the more general context of $\Ext^m_{G(q)}$
with irreducible coefficients.
 \end{abstract}
 
 \title[Shifted generic cohomology]{Shifted generic cohomology}
\author{Brian J. Parshall}
\address{Department of Mathematics \\
University of Virginia\\
Charlottesville, VA 22903} \email{bjp8w@virginia.edu {\text{\rm
(Parshall)}}}
\author{Leonard L. Scott}
\address{Department of Mathematics \\
University of Virginia\\
Charlottesville, VA 22903} \email{lls2l@virginia.edu {\text{\rm
(Scott)}}}
\author{David I. Stewart}
\address{New College, Oxford\\ Oxford, UK} \email{david.stewart@new.ox.ac.uk {\text{\rm(Stewart)}}}

\thanks{Research supported in part by the National Science
Foundation}
\medskip \maketitle
\section{Introduction}

Let $G$ be a simply connected, semisimple  algebraic group defined and split over the prime
field ${\mathbb F}_p$ of positive characteristic $p$. Write $k=\bar{\mathbb F}_p$. For a power $q=p^r$, let $G(q)$ be the
subgroup of ${\mathbb F}_q$-rational points in $G$. Thus, $G(q)$ is a finite Chevalley
group. Let $M$ be a finite dimensional rational $G$-module and let $m$ be a non-negative integer. In \cite{CPSK77}, the
first two authors of this paper, together with Ed Cline and Wilberd van der Kallen, defined the
notion of the generic $m$-cohomology
$$\opH^m_{\text{\rm gen}}(G,M):=\underset{\underset{q}\leftarrow}\lim\,\opH^m(G(q),M)$$
of $M$.
The limit is, in fact, a {\it stable} limit for any given $M$. Moreover, $\opH^m_\gen(G,M)\cong \opH^m(G, M^{[e_0]})$, where $M^{[e_0]}$ denotes the twist of $M$ through some $e_0$th power of the Frobenius endomorphism of $G$. Although the non-negative integer $e_0$  may be chosen independently of $p$ and $M$, it can also be chosen as a function $e_0(M)$ of $M$.
Unfortunately, given a rational $G$-module $M$ for which one wants to compute $\opH^m(G(q),M)$, it is
frequently necessary to take $e_0(M)>0$. This problem has been noted by others \cite[\S1]{Georgia}. Worse, it may be necessary to enlarge $q$ in order to obtain
$\opH^m(G(q),M)\cong \opH_\gen^m(G,M)$. The problem is exacerbated if one is interested in calculations
for an infinite family of modules $M$, such as the irreducible $G$-modules. By a famous result of Steinberg,
all irreducible $kG(q)$-modules are, up to isomorphism, the restrictions to $G(q)$ of the irreducible rational $G$-modules whose highest weights are $q$-restricted. 

We propose here a remedy to this situation. Observe that, for any $q$-restricted
dominant weight $\lambda$ and non-negative integer $e$, there is a unique $q$-restricted dominant weight $\lambda'$ with $L(\lambda)^{[e]}|_{G(q)}
=L(\lambda')|_{G(q)}$. Write $\lambda'=\lambda^{[e]_q}$ and $L(\lambda')=L(\lambda)^{[e]_q}$.
We shall refer to any weight $\lambda'$ of this form as a {\it $q$-shift of }$\lambda$. The 
main result, Theorem \ref{shiftedGeneric}, in this paper shows  that, for $r\gg 0$\footnote{The lower bound on $r$ here depends only on the root system $\Phi$ of $G$ and
the cohomological degree $m$, and not on $p$ or $\lambda$. Moreover, this bound can be recursively determined.}, and 
any $q$-restricted dominant weight $\lambda$  
\begin{equation}\label{main}\opH^m(G(q), L(\lambda))\cong\opH^m_\gen(G, L(\lambda'))\cong \opH^m(G,L(\lambda')),\end{equation}
for some $q$-restricted weight $\lambda'=\lambda^{[e]_q}$ with $e=e(\lambda)=e(\lambda,q)\geq 0$.   Similar results hold for $\Ext^m_{G(q)}(L(\mu),L(\lambda))$ with $\lambda,\mu$ both
$q$-restricted, though with some conditions on $\mu$. The first isomorphism in (\ref{main}) may be viewed
as saying that $L(\lambda)$ is ``shifted $m$-generic at $q$"; see the end of this introduction.\footnote
{We ask in Question \ref{conjecture} below if the $\Ext$-analog of the first isomorphism holds for all $q$-restricted $\lambda$ and
$\mu$, though we know conditions on $\mu$ are needed for the second.}  The map $\lambda\mapsto\lambda^{[e]_q}$ defines an action of the cyclic group ${\mathbb Z}/r{\mathbb Z}$ on the set $X^+_r$ of $q$-restricted
weights, and $\lambda'$ in (\ref{main}) is a ``distinguished" member in the orbit of $\lambda$ under
this action, chosen to optimize the positions of zero terms in its $p$-adic expansion. 

  The origin of these results goes
back to $\Ext^m$-questions raised by the third author in \cite[\S3]{SteSL3}, where the $q$-shift
$\lambda^{[e]_q}$ of $\lambda$ was denoted $\lambda^{\{e\}}$, and $L(\lambda^{\{e\}})$ was called a $q$-wrap of $L(\lambda)$.   While raised for general $m\geq 0$, these questions arose in part  from observations for $m=1,2$, namely, from noting 
a parallel between the $2$-cohomology result \cite[Thm. 2]{SteSL3} and a $1$-cohomology result in \cite[Thm. 5.5]{BNP06}, which also had an $\Ext^1$-analog \cite[Thm. 5.6]{BNP06}---the conclusions of all these
results involve what we now call $q$-shifted weights in their formulation.\footnote{As far as we know, the
\cite[Thm. 5.6]{BNP06} is the first use of $q$-shifted weights in a general homological theorem. However, this shifting (or wrapping) behavior for $SL_2$ had been observed much earlier: see \cite[Cor. 4.5]{AJL83}.} Essentially,
our main Theorem \ref{shiftedGeneric} provides a strong answer to \cite[Question 3.8]{SteSL3} in the cohomology cases, in 
addition to interpreting it in terms of generic cohomology.  Also,
Theorem \ref{thm6.2}(c) proves a similar result for $\Ext^m_{G(q)}(L(\mu),L(\lambda))$ when $p$ is sufficiently large,
and with no requirement on $r$, but with $\lambda$ and $\mu$ required to have a zero digit in common (i.~e.,
$\lambda_i=\mu_i=0$ for some $i<r$, using the terminology below).
Remark \ref{needATwist} gives an example showing this result is near best possible, especially when $\lambda=\mu$, and that the original  \cite[Question 3.8]{SteSL3} must be reformulated. Such a reformulation is given
in Question \ref{conjecture}.

Our investigation yields many other useful results.
  We
mention a few. First, any dominant weight $\lambda$ has a $p$-adic expansion $\lambda=\lambda_0+p\lambda_1 + p^2\lambda_2
+\cdots$, where each $\lambda_i$ is $p$-restricted. We call the pairs $(i,\lambda_i)$ \emph{digits} of
$\lambda$, and we say a digit is 0 if $\lambda_i=0$. Theorem \ref{digitsForG(q)} states that, given
$m\geq 0$, there is a constant $d$, depending only on $\Phi$ and $m$,  such that, for any prime $p$,  any power $q=p^r$, and any pair of $q$-restricted weights $\lambda,\mu$, if $\Ext^m_{G(q)}(L(\mu),L(\lambda))\not=0$, then $\lambda$ and $\mu$ differ by at most  $d$ digits. Thus, in the cohomology case,  
 if $\opH^m(G(q), L(\lambda))\not=0$, then $\lambda$ has at most $d$ nonzero
digits. Versions of these results hold for both rational $G$-cohomology and $\Ext^m$-groups; see Theorem
\ref{digits}. These 
digit bounding results were inspired by \cite[Question 3.10]{SteSL3}, which we answer completely.

Second, combining the main Theorem \ref{shiftedGeneric} with the large prime cohomology results \cite[Thm. 7.5]{BNP01} gives a new
 proof\footnote{The first proof that such a bound exists is unpublished, part of joint work in progress on homological bounds for finite groups of Lie type by the three authors of this paper
 together with C. Bendel, D. Nakano, and C. Pillen. } that there is a bound on $\dim\opH^m(G(q),L(\lambda))$, for $q$-restricted $\lambda$, depending only on $\Phi$ and $m$, and not on $p$ or $r$. In fact, after throwing
 away finitely many values of $q$, Theorem \ref{summary} shows that  $\dim\opH^m(G(q),L(\lambda))$  is bounded by the maximum dimension of the spaces $\opH^m(G,L(\mu))$, with
 $p$ and $\mu\in X^+$ allowed to vary (with only $m$ and $\Phi$ fixed).  The latter maximum has been shown to be finite in \cite[Thm. 7.1]{PS11}. Indeed, apart from finitely many exceptional $q$, the finite group cohomology $\opH^m(G(q),L(\lambda))$ identifies
 with a rational cohomology group $\opH^m(G,L(\mu))$, for an explicitly determined dominant weight $\mu$ (which depends on $\lambda$).

Though the main focus of this paper is on results which hold for all primes $p$,  we collect several
results in Section 6, most formulated in the $\Ext^m_{G(q)}$-context, which are valid in the special case when $p$ is modestly large. One such result is Theorem \ref{thm6.2}(c) discussed above. This theorem, given in
a ``shifted generic" framework, leads also to a fairly definitive treatment of generic cohomology for large
primes in Theorem \ref{lastcor} and the Appendix.

A key ingredient in this work is the
elegant filtration, due to Bendel-Nakano-Pillen, of the induced module ${\mathcal G}_q(k)
:=\ind_{G(q)}^Gk$; see \cite{BNP11} and the other references at the start of Section 4.  This result is, in our
view, the centerpiece of a large collection of results and ideas of these authors, focused on using the induction functor $\ind_{G(q)}^G$ in concert with truncation to smaller categories of rational $G$-modules.  
The filtration of ${\mathcal G}_q(k)$ is described
in Theorem \ref{filtration} below, and we derive some consequences of it in Section 4.  

Also, the specific theorems and ideas establishing generic cohomology, as originally formulated in \cite{CPSK77}, play an important role in Section 5, both directly and as a background motivation for exploring digit bounding. 

Finally, to explain the title of this paper, a finite dimensional, rational $G$-module $M$ may be called ``$m$-generic at $q$"
if $\opH^m(G(q),M)\cong \opH^m_\gen(G,M)$.\footnote{In practice, this often happens when the stable limit defining the generic cohomology has already occurred at $q$; however, we do not make this part of the
definition.}   A natural generalization of this notion is to say that $M$ is
``shifted" $m$-generic at $q$ if there exists a module $M'$ which is $m$-generic at $q$ and such that $M'|_{G(q)}\cong M^{[e]}|_{G(q)}$ for some
$e\geq 0$. Thus, $\opH^m(G(q),M)\cong \opH^m(G(q),M')\cong\opH^m_\gen(G,M')$. 
Our paper shows that many modules may be fruitfully
regarded as shifted $m$-generic at $q$, when it is unreasonable or false that they are $m$-generic
at $q$. The digit bounding results discussed above, which mesh especially well with the generic
cohomology theory, provide the main tool for finding such modules in non-trivial cases,  and this is the strategy for the proof of Theorem \ref{shiftedGeneric}.
In fact, Theorem \ref{shiftedGeneric} shows that often one can obtain the additional isomorphism
$\opH^m_\gen(G,M')\cong\opH^m(G,M')$, an attractive property for computations. 

We thank Chris Bendel, Dan Nakano, and Cornelius Pillen for remarks on an early draft of this paper, and
for supplying several references to the literature.

\section{Some preliminaries}
 Fix an irreducible root system
$\Phi$ with positive (resp., simple) roots $\Phi^+$ (resp., $\Pi$) selected.\footnote{The assumption that
$\Phi$ is irreducible is largely a convenience. The reader can easily extend to the general case, i.~e.,
when the group $G$ below is only assumed to be semisimple. } Let $\alpha_0\in\Phi^+$ be
the maximal short root, and let $h=(\rho,\alpha_0^\vee)+1$ be the Coxeter number of $\Phi$ (where $\rho$ is
the half sum of the positive roots). Write $X$ for the full weight lattice of $\Phi$, and let $X^+\subset X$ be the set of dominant weights determined by $\Pi$.

Now fix a prime $p$. For
a positive integer $b$, let $X^+_b:=\{\lambda\in X^+\,|\, (\lambda,\alpha^\vee)<p^b, \forall \alpha\in\Pi\,\}$
be the set of $p^b$-restricted dominant weights.  At times it is useful to regard the $0$ weight as (the only)
$p^0$-restricted dominant weight.

Let $G$ be a simple, simply connected algebraic group, defined and split over
a prime field ${\mathbb F}_p$ and having root system $\Phi$.  Fix a maximal split
torus $T$, and let $B\supset T$ be the Borel subgroup determined the negative roots $-\Phi^+$.
For $\lambda\in X^+$, $L(\lambda)$ denotes the irreducible rational $G$-module of highest
weight $\lambda$. 
If $F:G\to G$ is the Frobenius morphism, then, for any positive integer $b$, let $G_b=\Ker(F^b)$
be the (scheme theoretic) kernel of $F^b$. Thus, $G_b$ is a normal, closed (infinitesimal) subgroup of $G$.
Similar notations are used for other closed subgroups of $G$. 

The representation and cohomology theory for linear algebraic groups (especially semisimple groups and
their important subgroups) is extensively developed in Jantzen's book \cite{Jantzen}, with which we assume
the reader is familar. We generally
follow his notation (with some small modifications).\footnote{The reader should keep in mind that
$\Ext^m_G(L(\lambda),L(\mu))\cong \Ext^m_G(L(\mu),L(\lambda))$ and a similar statement holds
for $G(q)$. Often we write the $L(\mu)$ on
the left, because $\mu$ sometimes plays a special role (with restrictions of some kind), and taking $\mu=0$
gives $\opH^m(G,L(\lambda))$. But we are not always consistent, as in some places where it is more
convenient to have $L(\mu)$ on the right.}
If $M$ is a rational $G$-module and $b$ is a non-negative integer, write $M^{[b]}$ for the rational $G$-module
obtained by making $g\in G$ act through $F^b(g)$ on $M$. If $M$ already has the form $M\cong N^{[r]}$
for some $r\geq b$, write $M^{[-b]}:=N^{[r-b]}$.

Let $\ind_B^G$ be the induction functor from the category of rational $B$-modules to rational $G$-modules.
(See \S4 for a brief discussion of induction in general.) Given $\lambda\in X$, we denote the corresponding one-dimensional rational $B$-module
also by $\lambda$, and write $\opH^0(\lambda)$ for $\ind_B^G\lambda$. Then $\opH^0(\lambda)\not=0$
if and only if $\lambda\in X^+$; when $\lambda\in X^+$, $\opH^0(\lambda)$ has irreducible socle
$L(\lambda)$ of highest weight $\lambda$, and formal character $\ch\opH^0(\lambda)$ given by Weyl's character formula at the dominant weight $\lambda$. In most circumstances, especially when
regarding $\opH^0(\lambda)$ as a co-standard (i.e., a dual Weyl) module in the highest weight category of rational
$G$-modules, we denote $\opH^0(\lambda)$ by $\nabla(\lambda)$.\footnote{ Similarly, $\nabla_\zeta(\lambda)$ will
denote the standard module of highest weight $\lambda$ in a category of modules for a quantum
enveloping algebra $U_\zeta$; see Lemma \ref{filtrationlemma}.} Given $\lambda\in X$, let $\lambda^*:=-w_0(\lambda)$,
where $w_0$ is the longest element in the Weyl group $W$ of $\Phi$. If $\lambda\in X^+$, then $\lambda^*\in X^+$ 
is just the image of $\lambda$ under the opposition involution. For $\lambda\in X^+$, put $\Delta(\lambda)=
\nabla(\lambda^*)^*$, the dual of $\nabla(\lambda^*)$. In other words, $\Delta(\lambda)$ is the Weyl
module for $G$ of highest weight $\lambda$. Of course, $L(\lambda)^*=L(\lambda^*)$. 

 For $i\geq 0$,
let $R^i\ind_B^G$ be the $i$th derived functor of $\ind_B^G$. Then $R^i\ind_B^G=0$ for $i>|\Phi^+|$.

We will need another notion of the magnitude of a weight. If $b$ is a nonnegative integer, $\lambda\in X$ is called \emph{$b$-small} if $|(\lambda,\alpha^\vee)|\leq b$ for all $\alpha\in\Phi^+$. If $\lambda\in X^+$, $\lambda$ is $b$-small if and only if $(\lambda,\alpha^\vee_0)\leq b$. We say a (rational) $G$-module is \emph{$b$-small} provided all of its all of its weights are $b$-small. Equivalently,
it is $b$-small provided its maximal weights (in the dominance order) are $b$-small.
In particular, if $\lambda\in X^+$ is $b$-small, then any highest weight module $M$ with highest weight $\lambda$, e.~g., $L(\lambda)$, $\nabla(\lambda)$, or $\Delta(\lambda)$,  is also $b$-small. We make some elementary remarks about small-ness.

\begin{lem}\label{LenLem}Let $\nu$ be any dominant weight and let $b,b',r,u$ be non-negative intergers.
\begin{enumerate}\item[(a)] If $b>0$, assume $u\geq [\log_p b]+1$, where $[\,\,]$ denotes the greatest
integer function. Then $b\leq p^u-1$, and, if $\nu$ is $b$-small, $\nu$ is $p^u$-restricted.
\item[(b)] If $\nu$ is $p^r$-restricted, then $\nu$ is $(h-1)(p^r-1)$-small.
\item[(c)] Let $M$ and $N$ be two highest weight modules for $G$ with highest weights $\nu$, $\mu$. If
$\mu$ and $\nu$ are $b$-small and $b'$-small, respectively,   the tensor product $M\otimes N$ is $(b+b')$-small.
\item[(d)] If $\lambda,\mu\in X^+_b$ are both $p^b$-restricted, then all composition factors of $L(\lambda)\otimes L(\mu)$ are $p^{b+[\log_p(h-1)]+2}$-restricted. If, in fact, $p\geq 2h-2$ all the composition factors of $L(\lambda)\otimes L(\mu)$ are $p^{b+1}$-restricted.
\end{enumerate}
\end{lem}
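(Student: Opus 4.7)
My plan is to treat the four parts in order, noting that (d) is a formal consequence of (a)--(c); no homological input is needed, just weight--pairing arithmetic and logarithm bookkeeping.

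For (a), the hypothesis $u\geq[\log_p b]+1$ gives $p^u>b$, so $b\leq p^u-1$, and for any $b$-small $\nu\in X^+$ and any simple $\alpha$, $(\nu,\alpha^\vee)\leq b< p^u$, which is $p^u$-restrictedness. For (b), I would expand $\alpha_0^\vee=\sum_{\alpha\in\Pi}c_\alpha\alpha^\vee$ with $c_\alpha\in\Z_{\geq 0}$, pair with $\rho$ and use $(\rho,\alpha^\vee)=1$ for simple $\alpha$ to identify $\sum_\alpha c_\alpha=(\rho,\alpha_0^\vee)=h-1$, and then compute
\[
(\nu,\alpha_0^\vee)=\sum_\alpha c_\alpha(\nu,\alpha^\vee)\leq (h-1)(p^r-1),
\]
finishing via the equivalence ``$\lambda\in X^+$ is $b$-small iff $(\lambda,\alpha_0^\vee)\leq b$'' stated just before the lemma. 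For (c), the stated equivalence also implies that every weight of $M$ is $b'$-small and every weight of $N$ is $b$-small, so every weight $\omega_M+\omega_N$ of $M\otimes N$ satisfies $|(\omega_M+\omega_N,\alpha^\vee)|\leq b+b'$ for all $\alpha\in\Phi^+$ by the triangle inequality.

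For (d), applying (b) to $\lambda$ and $\mu$ and then (c) shows $L(\lambda)\otimes L(\mu)$ is $2(h-1)(p^b-1)$-small, hence so is every composition factor. By (a), such a factor is $p^u$-restricted for $u=[\log_p(2(h-1)(p^b-1))]+1$. The crude bound $2(h-1)(p^b-1)<2(h-1)p^b$ together with $p\geq 2$ gives
\[
u\;\leq\; b+[\log_p(2(h-1))]+1\;\leq\; b+[\log_p(h-1)]+2,
\]
which is the general statement. When $p\geq 2h-2$, the sharper estimate $2(h-1)(p^b-1)<p\cdot p^b=p^{b+1}$ forces $[\log_p(2(h-1)(p^b-1))]\leq b$, giving $p^{b+1}$-restrictedness.

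The only point that requires real care is the integer-part bookkeeping in (d): one must exploit the strict inequality $p^b-1<p^b$ to save the right number of units of $u$ in each case, and use $p\geq 2$ to absorb the factor of $2$ in $2(h-1)$ into a single extra increment in the floor. Everything else reduces immediately to the definition of small-ness and to the Coxeter-number identity $(\rho,\alpha_0^\vee)=h-1$.
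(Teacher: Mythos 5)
Your proof is correct and follows essentially the same line as the paper's, modulo small presentational differences. In (a) the arguments are identical. In (b) you make explicit what the paper uses implicitly: the paper writes $(\nu,\alpha_0^\vee)\leq((p^r-1)\rho,\alpha_0^\vee)$, which in turn rests on the fact that $\alpha_0^\vee$ is a non-negative integer combination of simple coroots (so pairing against $\alpha_0^\vee$ is monotone with respect to the partial order determined by nonnegativity against all simple coroots); you carry out exactly that expansion and combine it with $(\rho,\alpha_0^\vee)=h-1$. In (c) the paper notes that the highest weight of $M\otimes N$ is $(b+b')$-small and invokes the equivalence ``a highest weight module is $b$-small iff its highest weight is,'' whereas you apply that same equivalence to $M$ and $N$ separately and then finish with the triangle inequality over all pairs of weights; these come to the same thing. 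In (d) the floor-function bookkeeping is slightly repackaged (you bound $2(h-1)(p^b-1)$ directly by $2(h-1)p^b$, the paper splits $[\log_p 2+\log_p(p^b-1)]$ out separately), but the underlying estimates and the $p\geq 2h-2$ case are the same. The only minor thing to flag is the degenerate case $b=0$ of part (d), where $\log_p$ of zero is undefined; both your argument and the paper's quietly skip this, but the claim is trivially true there since $\lambda=\mu=0$.
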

\begin{proof} First we prove (a). The case $b=0$ is clear, so assume $b>0$. If $b\geq p^u$, then
$\log_pb\geq [\log_pb]+1$, which is false. Thus, $b\leq p^u-1$. If $\nu$ is $b $-small, then
$(\nu,\alpha^\vee)\leq b$ for each $\alpha\in\Phi^+$, so $\nu$ is $p^u$-restricted. Hence,  (a) holds.

For (b) we note $(\nu,\alpha_0^\vee)\leq ((p^n-1)\rho,\alpha_0^\vee)\leq (p^r-1)(h-1)$. This proves (b) since $\nu\in X^+$ is dominant.

For (c), the highest weight of $M\otimes N$ is $(b+b')$-small. Since any other weight of $M\otimes N$ is obtained by subtracting positive roots, the statement follows.

To prove (d), note that (b) implies that $\lambda$ and $\mu$ are $(h-1)(p^b-1)$-small. Thus by (c) all composition factors of $L(\lambda)\otimes L(\mu)$ are $2(h-1)(p^b-1)$-small. Then by (a), all composition factors of $L(\lambda)\otimes L(\mu)$ are $p^e$-restricted, where \begin{align*}e&=[\log_p(2(h-1)(p^b-1))]+1\\
&\leq [\log_p2+\log_p(p^b-1)]+[\log_p(h-1)]+2\\
&\leq b+[\log_p(h-1)]+2.\end{align*} The case $p\geq 2h-2$ follows similarly. \end{proof}

\section{Bounding weights} Let $U=R_u(B)$ be the unipotent radical of $B$, and let $u$ be the Lie
algebra of $U$.

\begin{lem}\label{Lem0}For any non-negative integer $m$, the $T$-weights in the ordinary cohomology
space $\opH^m(u,k)$ are $3m$-small (and they are sums of positive roots).
\end{lem}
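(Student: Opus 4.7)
The plan is to compute $\opH^m(u,k)$ using the Chevalley--Eilenberg (Koszul) complex and then read off the possible $T$-weights directly. Recall that the complex has $m$th term $\Lambda^m(u^*)$, and all the differentials are $T$-equivariant. Hence the $T$-weights appearing in $\opH^m(u,k)$ form a subset of the weights appearing in $\Lambda^m(u^*)$.

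Next I would identify the weights of $\Lambda^m(u^*)$. Because $B$ was chosen so that its Lie algebra contains the negative root spaces, the adjoint $T$-weights of $u$ are the negative roots $-\Phi^+$, each with multiplicity one, and so the weights of $u^*$ are the positive roots $\Phi^+$, each with multiplicity one. Therefore every $T$-weight $\mu$ of $\Lambda^m(u^*)$ has the form $\mu=\alpha_{i_1}+\cdots+\alpha_{i_m}$ with $\alpha_{i_1},\ldots,\alpha_{i_m}$ distinct positive roots; in particular $\mu$ is a sum of positive roots, which gives the parenthetical claim immediately.

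Finally, to obtain the $3m$-small bound, I would pair $\mu$ with an arbitrary coroot $\alpha^\vee$ ($\alpha\in\Phi^+$) and use the standard fact that each Cartan integer satisfies $|(\beta,\alpha^\vee)|\leq 3$ (the extremal value occurring only in type $G_2$, and otherwise bounded by $2$). Thus
\[
|(\mu,\alpha^\vee)| \;=\; \Bigl|\sum_{j=1}^m(\alpha_{i_j},\alpha^\vee)\Bigr|\;\leq\;\sum_{j=1}^m|(\alpha_{i_j},\alpha^\vee)|\;\leq\;3m,
\]
giving $3m$-smallness. There is no real obstacle here; the only point requiring any care is making sure the Koszul complex description of $\opH^m(u,k)$ is being used in the correct convention (cohomology versus homology), so that the weights that appear are $+$sums of positive roots rather than $-$sums, matching the statement of the lemma.
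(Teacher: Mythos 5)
Your proof is essentially identical to the paper's: both read off the $T$-weights of $\opH^m(u,k)$ from the $m$th term $\Lambda^m(u^*)$ of the Koszul (Chevalley--Eilenberg) complex, observe they are sums of $m$ positive roots, and then invoke the fact that each positive root is $3$-small (equivalently, that Cartan integers are bounded by $3$ in absolute value). The argument is correct and matches the paper's reasoning step for step.
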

\begin{proof} The $T$-weights in $\opH^m(u,k)$ are included among the  $T$-weights of the exterior power $\bigwedge^m(u^*)$ appearing in the the Koszul complex computing $\opH^\bullet(u,k)$. Hence, they are sums of $m$ positive
roots. Since each positive root is $3$-small, these weights are $3m$-small.\end{proof}

Recall that, for $r\geq 1$, $U_r$ is the Frobenius kernel of $F^r|_U$. 
\begin{lem}\label{Lem1} For any non-negative integer $m$, the $T$-weights of $\opH^m(U_1,k)$ are $3mp$-small.\end{lem}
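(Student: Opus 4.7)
The plan is to bootstrap Lemma \ref{Lem0} through the standard cohomology spectral sequence for the (infinitesimal) group scheme $U_1$, picking up only an extra factor of $p$ from the Frobenius-twisted generators.

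First, identify $\opH^m(U_1,k)$ with the cohomology $\opH^m(u(u),k)$ of the restricted enveloping algebra of $u=\Lie(U)$, and invoke the (Friedlander--Parshall / May) $T$-equivariant spectral sequence
$$E_1^{i,j}=S^i(u^{*(1)})\otimes\Lambda^j(u^*)\Longrightarrow\opH^{2i+j}(U_1,k),$$
where the exterior generators $u^*$ sit in cohomological degree $1$ with $T$-weights the positive roots, and the Frobenius-twisted symmetric generators $u^{*(1)}$ sit in cohomological degree $2$ with $T$-weights equal to $p$ times the positive roots. Since the differentials are $T$-equivariant, every $T$-weight $\nu$ of $\opH^m(U_1,k)$ occurs as a $T$-weight on some $E_1^{i,j}$ with $2i+j=m$, hence on some subquotient thereof.

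Next, carry out the weight bound following the idea of Lemma \ref{Lem0}. Such a $\nu$ is the sum of $j$ positive roots (from the exterior factor) and $p$ times a sum of $i$ positive roots (from the Frobenius-twisted symmetric factor). Since each positive root is $3$-small, one obtains
$$|(\nu,\alpha^\vee)|\le 3j+3ip=3(j+ip)\qquad\text{for every }\alpha\in\Phi^+.$$
Substituting $j=m-2i$ and using $0\le i\le m/2$ gives
$$j+ip=m+i(p-2)\le m+\tfrac{m}{2}(p-2)=\tfrac{mp}{2}\le mp,$$
so $\nu$ is $3mp$-small, as claimed. (In fact this argument shows the sharper bound $\tfrac{3mp}{2}$-small, but $3mp$ suffices for the later applications.)

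The only mildly delicate step is confirming the $T$-equivariant form of the spectral sequence in characteristic $2$, where the odd-degree classes square to Frobenius-twisted classes and the bigrading collapses in the usual way. This is not a real obstacle, however: a basis element $x\in u^*$ of weight $\alpha$ has $x^2\in\opH^2(U_1,k)$ of weight $2\alpha=p\alpha$ when $p=2$, so the weight bookkeeping and the arithmetic $j+ip\le mp$ are unaffected. Should one prefer to avoid the spectral sequence entirely, an equivalent route is to work directly on the Koszul-type resolution of $u(u)$: its $T$-weights in cohomological degree $m$ decompose as above, and the same calculation yields the bound.
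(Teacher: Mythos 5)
Your proof is essentially the same as the paper's: both apply the Friedlander--Parshall/May spectral sequence for $\opH^\bullet(U_1,k)$, bound the weights of the symmetric and exterior (or $\opH^\bullet(u,k)$) factors by sums of positive roots as in Lemma \ref{Lem0}, and maximize over bidegrees summing to $m$. The only difference of substance is cosmetic---you work on the $E_1$-page with $\Lambda^j(u^*)$ while the paper passes to $E_2$ with $\opH^j(u,k)$, but the latter is a $T$-stable subquotient of the former so the weight bounds agree.

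One small inaccuracy in an aside: your parenthetical claim of the sharper bound $\tfrac{3mp}{2}$ relies on the bigrading $2i+j=m$, which holds only for $p\neq 2$. When $p=2$ the spectral sequence is graded by $i+j=m$ (as the paper records explicitly), and the extremal case $i=m$, $j=0$ gives exactly $3mp=6m$, not $3m$. So the stated lemma bound $3mp$ is the correct uniform bound, and the ``sharper'' bound should be asserted only for $p>2$---which is precisely the distinction the paper draws and which you gesture at but do not fully carry through.
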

\begin{proof}By \cite[I.9.20]{Jantzen} and \cite[(1.2)(b)]{FP86} there are spectral sequences
$$\begin{cases}
p=2: E_2^{i,j}:=S^i(u^*)^{[1]}\otimes \opH^i(u,k) \implies \opH^{i+j}(U_1,k);\\
p\not=2: E_2^{2i,j}:=S^i(u^*)^{[1]}\otimes \opH^j(u,k)\implies \opH^{2i+j}(U_1,k).\end{cases}$$
Suppose that $p=2$. A weight $\nu$ in $ \opH^m(U_1,k)$ is a weight in $E^{i,j}_2$ for some
$i+j=m$. Using Lemma \ref{Lem0}, the largest value of $(\nu,\alpha_0)$ clearly occurs when $i=m$ and $j=0$. Since of weights of $S^m(u)^{[1]}$ are given as a sum $\sum_{k=1}^mp\alpha_k$, the weight $\nu$ is $3mp$-small.
Similarly, when $p>2$, the weight $\mu$ is $3(m/2)p$-small, so certainly $3mp$-small also.
\end{proof}
 
\begin{lem}\label{Lem2} For any $r>0$ and non-negative integer $m$, the $T$-weights of $\opH^m(U_r,k)$ are $3mp^r$-small.\end{lem}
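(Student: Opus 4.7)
My plan is to prove Lemma~\ref{Lem2} by induction on $r$. The base case $r=1$ is Lemma~\ref{Lem1}. For the inductive step, I would apply the Lyndon--Hochschild--Serre spectral sequence to the short exact sequence of infinitesimal $T$-group schemes
$$
1 \longrightarrow U_1 \longrightarrow U_r \stackrel{F}{\longrightarrow} U_{r-1}^{[1]} \longrightarrow 1,
$$
where the Frobenius $F$ identifies the quotient $U_r/U_1$ with the Frobenius-twisted copy $U_{r-1}^{[1]}$ of $U_{r-1}$; as a $T$-scheme, $U_{r-1}^{[1]}$ has all $T$-weights multiplied by $p$ relative to $U_{r-1}$. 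The resulting $T$-equivariant spectral sequence
$$
E_2^{i,j} = \opH^i\!\left(U_{r-1}^{[1]},\, \opH^j(U_1,k)\right) \implies \opH^{i+j}(U_r,k)
$$
shows that every $T$-weight of $\opH^m(U_r,k)$ occurs as a weight of some $E_2^{i,j}$ with $i+j=m$.

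I would then assemble the inputs. Lemma~\ref{Lem1} gives that $V:=\opH^j(U_1,k)$ has $3jp$-small weights, all sums of positive roots. By the inductive hypothesis, the weights of $\opH^i(U_{r-1},k)$ are $3ip^{r-1}$-small sums of positive roots, so the Frobenius twist makes the weights of $\opH^i(U_{r-1}^{[1]},k)$ into $3ip^r$-small ones. The key intermediate claim is that every $T$-weight of $\opH^i(U_{r-1}^{[1]}, V)$ can be written as $\nu+\mu$ with $\nu \in \wt(V)$ and $\mu \in \wt(\opH^i(U_{r-1}^{[1]}, k))$. Granting this, pairing with $\alpha_0^\vee$ gives that each weight of $E_2^{i,j}$ is $(3jp + 3ip^r)$-small, hence $3mp^r$-small (since $p^r \geq p$). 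The bound on $\opH^m(U_r,k)$ follows.

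The ``sum-of-weights'' claim can be justified by choosing a $T\ltimes U_{r-1}^{[1]}$-equivariant filtration of $V$ with one-dimensional associated graded factors on which $U_{r-1}^{[1]}$ acts trivially; such a filtration exists because $U_{r-1}^{[1]}$ is unipotent, so $V^{U_{r-1}^{[1]}}\neq 0$ contains a $T$-eigenvector one may split off and iterate. The associated hypercohomology spectral sequence then bounds weights of $\opH^i(U_{r-1}^{[1]}, V)$ by those of $V \otimes \opH^i(U_{r-1}^{[1]}, k)$ as a $T$-module, yielding the claim. The main obstacle I anticipate is the bookkeeping: carefully tracking the Frobenius twist in the $T$-action on $U_{r-1}^{[1]}$ and verifying that every step of the LHS-plus-filtration argument is $T$-equivariant. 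A possible alternative, avoiding Lyndon--Hochschild--Serre entirely, would be to establish a $U_r$-version of the spectral sequence in Lemma~\ref{Lem1}, namely $E_2 = S^{\ast}(u^{\ast})^{[r]} \otimes \opH^{\ast}(u,k) \implies \opH^{\ast}(U_r,k)$ (with the $p=2$ parity adjustment), from which the bound falls out immediately via Lemma~\ref{Lem0} and the observation that each weight of $S^i(u^{\ast})^{[r]}$ is a sum of $i$ terms of the form $p^r\alpha$ with $\alpha$ a positive root.
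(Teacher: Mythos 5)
Your argument is correct and is essentially the paper's proof: the paper also inducts on $r$, applies the Lyndon--Hochschild--Serre spectral sequence for $U_1\triangleleft U_r$ with quotient identified as $U_{r-1}^{[1]}$, and bounds the weights of $E_2^{i,j}$ by those of $\opH^i(U_{r-1},k)^{[1]}\otimes\opH^j(U_1,k)$. The one thing you do more carefully is justify (via a $T$-stable filtration of $\opH^j(U_1,k)$ with $U_{r-1}^{[1]}$-trivial one-dimensional sections) that the weights of $\opH^i(U_{r-1}^{[1]},\opH^j(U_1,k))$ are contained among those of $\opH^i(U_{r-1}^{[1]},k)\otimes\opH^j(U_1,k)$ --- the paper asserts this ``same weights'' identification without comment, and your containment argument is what the bound actually requires.
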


\begin{proof} We use the Lyndon--Hochschild--Serre spectral sequence
$$E_2^{i,j}:=\opH^i(U_r/U_1,\opH^j(U_1,k))\implies \opH^{i+j}(U_r,k).$$
The $E_2^{i,j}$-term has the same weights for $T$ as the $T$-module \[\opH^i(U_r/U_1,k)\otimes \opH^j(U_1,k)\cong
\opH^i(U_{r-1},k)^{[1]}\otimes \opH^j(U_1,k).\] The weights on the left hand tensor factor are $p(3ip^{r-1})=
3ip^r$-small. On the right hand side, the weights are $3jp$-small. Adding these together, the worst case
occurs for $i=m$, and the lemma follows. \end{proof}

The following is immediate:
\begin{cor}\label{Cor1} Suppose the weight $\lambda$ is $b$-small. Then the weights of $\opH^m(B_r,\lambda)\cong
(\opH^m(U_r,k)\otimes\lambda)^{T_r}$ are $(3mp^r+b)$-small. Moreover, the weights of $\opH^m(B_r,\lambda)^{[-r]}$
are $(3m +[b/p^r])$-small, where $[\,\,\,]$ denotes the greatest integer function.\end{cor}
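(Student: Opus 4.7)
The statement is labeled ``immediate'' in the text, so the plan is quite short: everything follows by combining Lemma \ref{Lem2} with the standard description of $B_r$-cohomology and a simple arithmetic observation.

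First I would invoke the standard isomorphism
\[
\opH^m(B_r,\lambda)\ \cong\ \bigl(\opH^m(U_r,k)\otimes\lambda\bigr)^{T_r},
\]
which the corollary already records and which comes from the semidirect decomposition $B_r=U_r\rtimes T_r$ together with the fact that $T_r$ is linearly reductive (Jantzen I.8.21/I.9.10). Any $T$-weight $\mu$ of the right-hand side has the form $\mu=\nu+\lambda$ for some $T$-weight $\nu$ of $\opH^m(U_r,k)$, with the $T_r$-invariance constraint $\mu\in p^rX$.

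For the first assertion, Lemma \ref{Lem2} gives $|(\nu,\alpha^\vee)|\le 3mp^r$ for every $\alpha\in\Phi^+$, while the hypothesis on $\lambda$ gives $|(\lambda,\alpha^\vee)|\le b$. Adding these bounds via the triangle inequality yields $|(\mu,\alpha^\vee)|\le 3mp^r+b$, so $\mu$ is $(3mp^r+b)$-small, which is the first claim.

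For the second assertion, write $\mu=p^r\mu'$ where $\mu'\in X$ is the corresponding weight of $\opH^m(B_r,\lambda)^{[-r]}$. Then $(\mu',\alpha^\vee)=(\mu,\alpha^\vee)/p^r$ is an integer bounded in absolute value by $3m+b/p^r$. Since $|(\mu',\alpha^\vee)|$ is an integer strictly less than $3m+b/p^r+1$, it is at most $3m+[b/p^r]$, giving the second claim. There is no real obstacle here; the main point is simply to combine Lemma \ref{Lem2} with the $B_r$-cohomology description and to observe that an integer bounded by $3m+b/p^r$ is bounded by $3m+[b/p^r]$.
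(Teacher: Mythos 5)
Your proposal is correct and matches the argument the paper implicitly intends (the paper simply labels the corollary ``immediate'' after Lemma \ref{Lem2}). The two ingredients—additivity of $b$-smallness under tensoring and the integrality trick converting $|(\mu',\alpha^\vee)|\le 3m+b/p^r$ into $|(\mu',\alpha^\vee)|\le 3m+[b/p^r]$—are exactly what is required, and your handling of the $T_r$-invariance (forcing weights into $p^rX$, so that the untwist $\opH^m(B_r,\lambda)^{[-r]}$ makes sense) is the right small point to flag.
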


\begin{thm}\label{MsmallToHsmall} Let $m$ be a non-negative integer, and let $r,b$ be positive integers.
Let $M$ be a $b$-small $G$-module. Then the $G$-module $\opH^m(G_r,M)^{[-r]}$ is  $(3m+[b/p^r])$-small.

In particular, if $M$ is $p^r$-restricted, then $\opH^m(G_r,M)^{[-r]}$ is $(3m +[(h-1)(p^r-1)/p^r])\leq (3m+h-2)$-small.\end{thm}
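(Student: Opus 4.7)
The plan is to reduce the theorem to Corollary \ref{Cor1} in two stages: first by extending Corollary \ref{Cor1} from a single $b$-small weight to a general $b$-small $B$-module, and then by transferring the bound from $B_r$-cohomology to $G_r$-cohomology via an Andersen--Jantzen-style spectral sequence. For the first stage, restrict $M$ to $B$ and choose a $B$-composition series $0 = N_0 \subset N_1 \subset \cdots \subset N_k = M$ with one-dimensional quotients $\lambda_j = N_j/N_{j-1}$, each necessarily $b$-small. Iteratively applying the long exact sequence in $\opH^\bullet(B_r,-)$ shows that every $T$-weight of $\opH^m(B_r, M)$ is a $T$-weight of $\opH^m(B_r, \lambda_j)$ for some $j$, hence $(3mp^r + b)$-small by Corollary \ref{Cor1}. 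After the Frobenius untwist, the $T$-weights of $\opH^m(B_r, M)^{[-r]}$ are $(3m + [b/p^r])$-small.

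For the second stage, I will invoke the standard spectral sequence
\begin{equation*}
E_2^{i,j} = R^i\ind_B^G\opH^j(B_r, M) \Longrightarrow \opH^{i+j}(G_r, M)
\end{equation*}
(see \cite{Jantzen}). Since $\ind_B^G$ commutes with Frobenius twist, each composition factor $L(\mu)$ of $\opH^m(G_r, M)^{[-r]}$ arises in some $R^i\ind_B^G N$ with $i + j = m$ and $N = \opH^j(B_r, M)^{[-r]}$ a $B$-module whose $T$-weights are $(3j + [b/p^r])$-small. By the strong linkage principle, such a $\mu$ satisfies $\mu \leq w.\nu$ in the dominance order for some weight $\nu$ of $N$ and some $w \in W$ with $w.\nu$ dominant. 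The identity
\begin{equation*}
(w.\nu, \alpha_0^\vee) = (\nu, w^{-1}\alpha_0^\vee) + (\rho, w^{-1}\alpha_0^\vee) - (\rho, \alpha_0^\vee),
\end{equation*}
combined with $(\rho, \gamma^\vee) \leq (\rho, \alpha_0^\vee) = h-1$ for every coroot $\gamma^\vee$ (as $\alpha_0^\vee$ is the highest coroot) and the smallness of $\nu$, yields $(w.\nu, \alpha_0^\vee) \leq 3j + [b/p^r]$. Because $(\alpha, \alpha_0^\vee) \geq 0$ for every simple root $\alpha$, $b$-smallness is preserved by the dominance order on dominant weights, giving $(\mu, \alpha_0^\vee) \leq 3m + [b/p^r]$, whence $\opH^m(G_r, M)^{[-r]}$ is $(3m + [b/p^r])$-small.

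The ``In particular'' statement then reduces to arithmetic: when $M$ is $p^r$-restricted, Lemma \ref{LenLem}(b) gives that $M$ is $(h-1)(p^r-1)$-small, and $[(h-1)(p^r-1)/p^r] = [(h-1)(1 - p^{-r})] \leq h-2$, so the bound becomes $3m + h - 2$. The main obstacle will be the weight-bound analysis in the second stage: in positive characteristic, the derived inductions $R^i\ind_B^G\nu$ are not simply $\nabla(w.\nu)$ but can have more complicated composition-factor content governed by strong linkage. The crucial cancellation $(\rho, w^{-1}\alpha_0^\vee) - (\rho, \alpha_0^\vee) \leq 0$, which relies on $\alpha_0^\vee$ being the highest coroot, ensures the $\rho$-shifts of the dotted Weyl action do not degrade the smallness bound.
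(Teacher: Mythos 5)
Your proof is correct, but it takes a genuinely different route from the paper's. The paper proves the bound in three stages: first for induced modules $\opH^0(\lambda)=\ind_B^G\lambda$ with $\lambda\in X^+$ a $b$-small dominant weight (so that Jantzen's spectral sequence II.12.2, whose abutment is $\opH^\bullet(G_r,\ind_B^G\lambda)$, applies directly and Kempf vanishing ensures the abutment is concentrated as claimed); then for $L(\lambda)$ by an induction on $m$ using the short exact sequence $0\to L(\lambda)\to\opH^0(\lambda)\to N\to 0$ and the long exact $G_r$-cohomology sequence; and finally for general $M$ by passing to composition factors. You instead apply the spectral sequence once, directly to the $B$-module $M|_B$, after first extending Corollary~\ref{Cor1} to general $b$-small $B$-modules via a $B$-composition series. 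This is more direct and eliminates the induction on $m$ entirely, which is a genuine simplification.

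The one step you should make explicit is why your spectral sequence abuts to $\opH^{i+j}(G_r,M)$. The Jantzen spectral sequence, as stated (and as the paper applies it), has $E_2$-page $R^i\ind_B^G\bigl(\opH^j(B_r,N)^{[-r]}\bigr)$ for a $B$-module $N$, with abutment the total derived functor of $\opH^0(G_r,-)\circ\ind_B^G$ at $N$; one gets $\opH^{i+j}(G_r,\ind_B^G N)$ only when $R^{>0}\ind_B^G N=0$. For $N=\lambda$ dominant this is Kempf vanishing, which is why the paper works with $\opH^0(\lambda)$. For your choice $N=M|_B$ one needs the tensor identity: $\ind_B^G(M|_B)\cong M\otimes\ind_B^G k\cong M$ and, for $i>0$, $R^i\ind_B^G(M|_B)\cong M\otimes R^i\ind_B^G k=0$ since $\opH^{>0}(G/B,\sO_{G/B})=0$. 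With that justification supplied, your abutment is indeed $\opH^{i+j}(G_r,M)$ and the rest of your argument goes through; your strong-linkage and highest-coroot computation of the weight bound is a valid (essentially equivalent) rearrangement of the paper's $w\cdot\mu\le w(\mu)$ argument. You should also note, as the paper does, that if $\nu+\rho$ lies on a wall (no $w$ makes $w\cdot\nu$ dominant) then $R^i\ind_B^G\nu=0$, so that case is vacuous.
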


\begin{proof} We will show the statement holds when $M$ is an induced module with highest $b$-small weight $\lambda$; thence we deduce that the statement holds for $L(\lambda)$, so the statement follows for all $M$, since it holds for its composition factors.

By \cite[II.12.2]{Jantzen}, there is a first quadrant spectral sequence
$$E_2^{i,j}:=R^i\ind_B^G\opH^j(B_r,\lambda)^{[-r]}\implies \opH^{i+j}(G_r,\opH^0(\lambda))^{[-r]}.$$
Any weight in $\opH^m(G_r,\opH^0(\lambda))^{[-r]}$ is a weight of $E_2^{i,j}$ for
some $i,j$ with $i+j=m$, hence a weight of $\opH^i(\mu)$ for some $\mu\in \opH^j(B_r,\lambda)^{[-r]}$. So it suffices to show that any weight of $\opH^i(\mu)$ for $\mu\in \opH^j(B_r,\lambda)^{[-r]}$ is $(3m+[b/p^r])$-small.

By Corollary \ref{Cor1},  a weight $\mu$ of $\opH^j(B_r,\lambda)^{[-r]}$ is $(3j+[b/p^r])$-small; hence it is also $b':=(3m+[b/p^r])$-small.
 Choose $w\in W$ so that $w\cdot\mu\in X^+-\rho$. If $w\cdot\mu$ is not in $X^+$ then $R^i\Ind_B^G\mu=0$. Hence we may assume that $w\cdot\mu\in X^+$.
Now, $w\cdot\mu=w(\mu)+w\rho-\rho\leq w(\mu)$.
 Since $w\cdot\mu\in X^+$, $w\cdot\mu$ is $b'$-small if and only if $(w\cdot\mu,\alpha_0^\vee)\leq b'$. But
 $$(w\cdot\mu,\alpha_0^\vee)\leq(w(\mu),\alpha_0^\vee)=(\mu,w^{-1}\alpha_0^\vee)\leq b'.$$

Now if $L(\nu)$ is a composition factor of $R^i\ind_B^G \mu$, the strong linkage principle \cite[II.6.13]{Jantzen} implies
 $\nu\uparrow w\cdot\mu$ and is in particular $b'$-small.

Thus we have proved the statement in the case $M=\opH^0(\lambda)$.

For the general case, we apply induction on $m$. We have a short exact sequence $0\to L(\lambda)\to \opH^0(\lambda)\to N\to 0$ where the $G$-module $N$ has composition factors whose high weights are less than $\lambda$ in the dominance order and are therefore $b$-small. Associated to this sequence is a long exact sequence of which part is
 \[\opH^{m-1}(G_r,N)^{[-r]}\to \opH^m(G_r,L(\lambda))^{[-r]}\to \opH^m(G_r,\opH^0(\lambda))^{[-r]}\]
 so that any $G$-composition factor of the middle term must be a $G$-composition factor of one of the outer terms. Now, the composition factors of the rightmost term are $(3m +[b/p^r])$-small by the discussion above. Since $N$ has composition factors with high weights less than $\lambda$ in the dominance order, these weights are $(3(m-1) +[b/p^r])$-small by induction, and are in particular $(3m+[b/p^r])$-small. Thus the weights of the middle term are also $(3m +[b/p^r])$-small.

 This proves the statement in the case $M=L(\lambda)$. The  case for all $b$-small modules $M$ now follows since it is true for each of its composition factors.

 For the last statement, we use Lemma \ref{LenLem}(b).
\end{proof}
 
 The following corollary for general $m$ follows from the previous theorem. For $m=1$, it is proved in
 \cite[Prop. 5.2]{BNP04-Frob}. 
 
\begin{cor}\label{extForGr}Let $\lambda,\mu\in X^+_r$. For any $r\geq 1$, the weights of $\Ext^m_{G_r}(L(\lambda),L(\mu))^{[-r]}$ are $(3m+2h-3)$-small. If $m=1$, the weights  of $\Ext^1_{G_r}(L(\lambda),L(\mu))^{[-r]}$ are $(h-1)$-small. \end{cor}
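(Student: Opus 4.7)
The plan is to reduce the $\Ext^m$ statement to an $\opH^m$ statement about a tensor product module, then apply Theorem~\ref{MsmallToHsmall}.

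First I would use the standard isomorphism
\[
\Ext^m_{G_r}(L(\lambda),L(\mu))\cong \opH^m(G_r,\,L(\lambda)^*\otimes L(\mu)),
\]
as $G$-modules. Since $\lambda,\mu\in X^+_r$ are both $p^r$-restricted (and $\lambda^*\in X^+_r$ as well), Lemma~\ref{LenLem}(b) tells me that each of $L(\lambda^*)$ and $L(\mu)$ is $(h-1)(p^r-1)$-small. Applying Lemma~\ref{LenLem}(c) to the tensor product, I conclude that $M:=L(\lambda)^*\otimes L(\mu)$ is $b$-small with $b=2(h-1)(p^r-1)$.

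Next I invoke Theorem~\ref{MsmallToHsmall} with this $b$: the weights of $\opH^m(G_r,M)^{[-r]}$ are $(3m+[b/p^r])$-small. A direct estimate gives
\[
\frac{b}{p^r}=\frac{2(h-1)(p^r-1)}{p^r}=2(h-1)-\frac{2(h-1)}{p^r}<2(h-1),
\]
so $[b/p^r]\le 2h-3$, and the weights of $\Ext^m_{G_r}(L(\lambda),L(\mu))^{[-r]}$ are therefore $(3m+2h-3)$-small. This is the general assertion.

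For the sharper $m=1$ bound, this does not follow from the chain above (which would only give $2h$-smallness), and the stronger $(h-1)$-bound genuinely requires the special structure of $\Ext^1_{G_r}$. The natural approach is simply to cite \cite[Prop.~5.2]{BNP04-Frob}, as the corollary does. Conceptually, the main obstacle avoided by that reference is that passing through a dual Weyl module filtration and the Koszul resolution as in Theorem~\ref{MsmallToHsmall} is not tight enough; the cleaner $(h-1)$-bound comes from recognizing that in degree one the only surviving contribution comes from a narrow piece of the $B_r$-cohomology, and this uses arguments specific to first cohomology that are not available in higher degrees. Thus the real content in my plan is the general $m$ case, which reduces to Theorem~\ref{MsmallToHsmall} and the floor estimate above; the $m=1$ refinement is handled by direct appeal to the cited result.
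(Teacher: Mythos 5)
Your proposal is correct and follows essentially the same route as the paper: the corollary is derived from Theorem~\ref{MsmallToHsmall} applied to $M=L(\lambda)^*\otimes L(\mu)$, whose smallness is read off from Lemma~\ref{LenLem}(b),(c), together with the floor estimate $[2(h-1)(p^r-1)/p^r]\le 2h-3$; the sharper $m=1$ bound is, exactly as you say, not a consequence of this chain and is obtained by citing \cite[Prop.~5.2]{BNP04-Frob}.
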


\begin{rems} (a) The result \cite[Prop 5.2]{BNP04-Frob} quoted above also gives, for $m>1$, an integer $b$ such that the
weights in $\Ext^m_{G_r}(L(\lambda),L(\mu))^{[-r]}$ are $b$-small. However, $b$ is multiplicative in
$h$ and $m$, and so is weaker than Corollary \ref{extForGr} 
for large $m$ and $h$ (for instance if $m,h\geq 4$). For $p>h$ and $m\geq 2$ note that our bound coincides with that given in \cite[Prop. 5.3]{BNP04-Frob} using the improvements from (c) below.

 (b)  It is interesting to ask when $\Ext^m_{G_r}(L(\lambda),L(\mu))^{[-r]}$ for $\lambda,\mu\in X^+_r$ has a good filtration. Even for $r=1$, there are examples due to Peter Sin (for instance, see \cite[Lem. 4.6]{Sin94}) showing that for small $p$ this question can have a negative answer. Obviously, if $p\geq 3m+3h-4$ (or 
 $p\geq 2h-2$ in case $m=1$), then $\Ext^m_{G_r}(L(\lambda),L(\mu))^{[-r]}$ has highest weights in 
 the lowest $p$-alcove, so it trivially has a good filtration.\footnote{The first two authors expect to prove in a later paper that a good filtration of $\Ext^m_{G_r}(L(\lambda),L(\mu))^{[-r]}$ always exists for restricted regular weights when $r=1$, $p\geq 2h-2$ and the Lusztig character formula holds for all irreducible modules with restricted highest weights.}

(c) The reader may check that many of the results in this section can be improved under certain mild conditions. For instance, if $\Phi$ is not of type $G_2$, its roots are all $2$-small. In this instance, wherever we have `$3m$' it can be replaced with `$2m$'. In addition, if $p>2$ the last sentence of the proof of Lemma \ref{Lem2} shows that one can replace $m$ with $[m/2]$. The same statement follows for most formulas in the remainder of the paper, however, we will not elaborate further in individual cases.  \end{rems}

\section{Relating $G(q)$-cohomology to $G$-cohomology} Inspired by work \cite{BNP01}, \cite{BNP04}, \cite{BNP02}, \cite{BNP06}, and \cite{BNP11}, Theorem
\ref{existsAnF}  establishes
an important procedure for describing $G(q)$-cohomology in terms of $G$-cohomology. This result will
be used in \S5 to prove the digit bounding results mentioned in the Introduction.

Before stating the theorem, we review some elementary results. The coordinate algebra $k[G]$ of $G$ is a left $G$-module ($f\mapsto g\cdot f, x\mapsto f(xg)$, $x,g\in G$, $f\in k[G]$) and a right $G$-module ($f\mapsto f\cdot g, x\mapsto f(gx)$, $x,g\in G$,$g\in k[G]$). Given a closed subgroup $H$ of $G$ and a rational $H$-module,
the induced module $\ind_H^G(V):=\Map_H(G,V)$ consists of all morphisms $f:G\to V$ (i.~e., morphisms of
the algebraic variety $G$ into the underlying variety of a finite dimensional subspace of $V$), which are
$H$-equivariant in the sense that $f(h.g)=h.f(g)$ for all $g,h\in G$. If $x\in G, f\in\ind_H^GV$, $x\cdot f\in
\ind_H^GV$, making $\ind_H^GV$ into a rational $G$-module (characterized by  the  property that $\ind_H^G$ is the right adjoint of the restriction functor $\res^G_H:G$--mod $\to$ $H$--mod). If $G/H$ is an affine variety (e.~g., if $H$ is a finite subgroup), $\ind_H^G$ is an exact functor \cite[Thm. 4.3]{CPS77}, which formally takes injective $H$-modules to injective $G$-modules. Thus, 
$\opH^\bullet(H,V)\cong\opH^\bullet(G,\ind_H^GV)$ for any rational $H$-module.  

Let $q=p^r$ for some prime integer $p$ and positive integer $r$, and let $m$ be a fixed non-negative
integer which will serve as the cohomological degree. As in \S2, let $G$ be the simple, simply connected algebraic group
defined and split over ${\mathbb F}_p$ with root system $\Phi$.

 \begin{lem}\label{Kop} (\cite{Kop84})
 Consider the coordinate algebra $k[G]$ of $G$ as a rational $(G\times G)$-module with action
  $$((g,h)\cdot f)(x)=f(h^{-1}xg),\,\,g,h,x\in G,
 f\in k[G].$$
 Then $k[G]$ has an increasing $G\times G$-stable  filtration $0\subset \mF'_0\subset\mF'_1\subset \cdots$ in which, for $i\geq 1$, $\mF'_i/\mF'_{i-1}\cong
  \nabla(\gamma_i)\otimes \nabla(\gamma^*_i)$, $\gamma_i\in X^+$, and $\cup_i\mF'_i=k[G]$. Each dominant weight $\gamma\in X^+$
  appears exactly once in the sequence $\gamma_0,\gamma_1,\gamma_2, \cdots$. \end{lem}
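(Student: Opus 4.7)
The result is due to Koppinen; I sketch my approach. First I enumerate the set of dominant weights $X^+ = \{\gamma_0 = 0, \gamma_1, \gamma_2, \ldots\}$ refining the dominance partial order (so $\gamma_i < \gamma_j$ in dominance forces $i < j$). For each $n \geq 0$, I define $\mF'_n \subset k[G]$ as the sum of the images of all $G \times G$-equivariant matrix-coefficient maps
\[
c_V \colon V \otimes V^* \longrightarrow k[G], \qquad v \otimes \phi \longmapsto \bigl( x \mapsto \phi(xv) \bigr),
\]
where $V$ ranges over finite-dimensional rational $G$-modules whose composition factors $L(\mu)$ all satisfy $\mu \leq \gamma_n$. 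Each $\mF'_n$ is $G \times G$-stable by construction, with $\mF'_n \subseteq \mF'_{n+1}$ and $\bigcup_n \mF'_n = k[G]$; the latter holds because every regular function on $G$ is a matrix coefficient of some finite-dimensional rational $V$ whose (finitely many) composition factor highest weights all lie below some $\gamma_n$.

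To identify the successive quotients, I combine Frobenius reciprocity with the highest weight structure. Viewing $k[G] \cong \ind_1^G k$ as a right $G$-module, Frobenius reciprocity yields $\Hom_G(\Delta(\lambda), k[G]) \cong \Delta(\lambda)^* = \nabla(\lambda^*)$ as vector spaces, and a direct computation (tracking the remaining left $G$-action through the isomorphism) identifies this $\Hom$ with $\nabla(\lambda^*)$ as a rational $G$-module. Regarding the category of rational $G \times G$-modules as a highest weight category with standards $\Delta(\lambda) \otimes \Delta(\mu)$ and costandards $\nabla(\lambda) \otimes \nabla(\mu)$, I would then exploit the universal property of the coefficient maps---together with the vanishing $\Ext^1_G(\Delta(\lambda), \nabla(\mu)) = 0$---to conclude that $\mF'_n / \mF'_{n-1}$ carries a $G \times G$-good filtration whose sections are all isomorphic to $\nabla(\gamma_n) \otimes \nabla(\gamma_n^*)$. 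Multiplicity one then follows from a character count: the total multiplicity of $\nabla(\gamma)$ in a good filtration of $k[G]$ (as a left $G$-module) equals $\dim \nabla(\gamma^*) = \dim \nabla(\gamma)$, matching exactly the contribution from the single $G \times G$-section $\nabla(\gamma) \otimes \nabla(\gamma^*)$.

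\textbf{The main obstacle} is pinning down the section $\mF'_n / \mF'_{n-1}$ on the nose as the specific $G \times G$-module $\nabla(\gamma_n) \otimes \nabla(\gamma_n^*)$, rather than merely as an object with the correct formal character. In characteristic zero the Peter--Weyl theorem yields an outright direct sum decomposition; in positive characteristic, ruling out nontrivial extensions between the various candidate sections requires careful use of the $G \times G$-highest weight category structure and $\Ext$-vanishing between standard and costandard modules.
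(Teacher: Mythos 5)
The paper does not prove this lemma---it is quoted directly from Koppinen \cite{Kop84} (see also Jantzen, \emph{Representations of Algebraic Groups}, II.4.20, and Donkin's coalgebra-filtration papers), so there is no ``paper's proof'' to compare against. Your sketch follows the standard Koppinen--Donkin strategy of filtering $k[G]$ by coefficient spaces of truncated categories, and the Frobenius-reciprocity identification $\Hom_{G\times 1}(\Delta(\lambda), k[G]) \cong \nabla(\lambda^*)$ together with the left-module multiplicity count $[\,k[G] : \nabla(\gamma)\,] = \dim\nabla(\gamma^*)$ is indeed the right consistency check.

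Two points need attention. First, a small definitional slip: taking $\mF'_n$ to be the span of coefficient images $cf(V)$ over all $V$ with composition factors $L(\mu)$, $\mu \leq \gamma_n$ in the dominance order, does not automatically give $\mF'_n \subseteq \mF'_{n+1}$ when $\gamma_n$ and $\gamma_{n+1}$ are incomparable; you want instead to require the composition factors to lie in the saturated set $\pi_n := \{\gamma_0, \dots, \gamma_n\}$ (equivalently, $V \in \mathcal{C}[\pi_n]$), which is downward-closed precisely because your enumeration refines dominance. Second---and you flag this yourself---the genuine gap is in establishing that $\mF'_n/\mF'_{n-1}$ is the honest module $\nabla(\gamma_n)\otimes\nabla(\gamma_n^*)$, rather than just having the right character. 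The $\Ext^1_G(\Delta,\nabla)=0$ vanishing and the character count are necessary but not sufficient; what is really needed is the theorem that $k[G]$ (equivalently, the coefficient coalgebras $cf(\mathcal{C}[\pi])$) admits a $G\times G$-good filtration at all. This in turn rests on serious input: the injectivity of $k[G] \cong \ind_{\Delta G}^{G\times G}k$ combined with the Donkin--Mathieu theorem that injective rational modules have good filtrations, or Kempf vanishing applied through $\ind_{B\times B}^{G\times G}$, or Donkin's direct coalgebra argument. Your sketch correctly locates where the real work lies, but does not yet supply it.
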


\begin{thm}\label{filtration} (\cite[Prop. 2.2 \& proof]{BNP11}) (a) The induced module $\ind_{G(q)}^G(k)$ is isomorphic to the pull-back of the $G\times G$-module
$k[G]$ above through the map $G\to G\times G$, $g\mapsto (g,F^r(g))$.

(b) In this way, $\ind_{G(q)}^G k$ inherits an increasing $G$-stable filtration $0\subset \mF_0\subset \mF_1\subset \cdots$ with $\bigcup \mF_i
=\ind_{G(q)}^Gk$,  in which, for $i\geq 1$,
$\mF_i/\mF_{i-1}\cong\nabla(\gamma_i)\otimes\nabla(\gamma_i^*)^{[r]}$. Moreover,
each dominant weight $\gamma\in X^+$ appears exactly once in the sequence  $\gamma_0,\gamma_1,\cdots$.
\end{thm}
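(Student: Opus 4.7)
For part (a), the plan is to invoke the Lang--Steinberg theorem applied to the Frobenius endomorphism $F^r$. The Lang map $L : G \to G$, $g \mapsto g^{-1}F^r(g)$, is surjective with fibres equal to the $G(q)$-cosets, since $G(q) = G^{F^r}$ is the set of fixed points of $F^r$. Consequently $L$ descends to an isomorphism of varieties $G(q)\backslash G \xrightarrow{\sim} G$, and pulling back along $L$ yields a vector-space isomorphism $k[G] \cong k[G(q)\backslash G] = \ind_{G(q)}^G(k)$. To upgrade to a $G$-module isomorphism, one computes $L(xg) = g^{-1}L(x)F^r(g)$, which shows that the natural (right-translation) $G$-action on the coset space transports to the $G$-action on $k[G]$ in which $g$ acts by $x \mapsto F^r(g)^{-1}\,x\,g$ (up to the inversion $g\mapsto g^{-1}$ needed to convert a right action into a left one). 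This is exactly the pull-back through $g \mapsto (g, F^r(g))$ of the $G\times G$-action $((g,h)\cdot f)(x) = f(h^{-1}xg)$ from Lemma \ref{Kop}.

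For part (b), once (a) is in hand the argument is essentially formal. Take the $G\times G$-stable Kopper filtration $0\subset \mF'_0\subset \mF'_1\subset \cdots$ of $k[G]$ from Lemma \ref{Kop}, and let $\mF_i$ be the subspace corresponding to $\mF'_i$ under the identification of (a). This is automatically an increasing, exhaustive, $G$-stable filtration of $\ind_{G(q)}^G(k)$. For each $i\geq 1$, the subquotient $\mF'_i/\mF'_{i-1}\cong \nabla(\gamma_i)\otimes\nabla(\gamma_i^*)$ pulls back as follows: the first tensor factor receives the action of the first copy of $G$ unchanged, while on the second factor the action of the second copy of $G$ is precomposed with $F^r$, which by the very definition of Frobenius twist converts $\nabla(\gamma_i^*)$ into $\nabla(\gamma_i^*)^{[r]}$. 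The statement that each $\gamma\in X^+$ appears exactly once in the sequence $\gamma_0,\gamma_1,\ldots$ is inherited verbatim from Lemma \ref{Kop}.

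The main obstacle is the bookkeeping in part (a): one must track carefully which variant of the Lang map is used ($g^{-1}F^r(g)$ versus $F^r(g)g^{-1}$), whether $\ind_{G(q)}^G$ produces left- or right-$G(q)$-invariant functions under the paper's convention, and how right translation on the coset space corresponds, under $L$, to a twisted action on $G$ itself. Once the variants are chosen so that the right-hand side of $L(xg) = g^{-1}L(x)F^r(g)$ matches the pulled-back $G\times G$-action, the $G$-equivariance in (a) is automatic and (b) follows purely formally by reading off the Frobenius twist on the second tensor factor.
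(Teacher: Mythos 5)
Your approach via the Lang--Steinberg theorem is the right one, and indeed the only reasonable one; the paper itself does not give a proof but simply cites \cite[Prop.\ 2.2]{BNP11}, whose proof proceeds exactly along these lines. Part (b) is, as you say, a purely formal consequence of part (a) together with Lemma \ref{Kop}.

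There is, however, a concrete mismatch in your bookkeeping for part (a), and it is worth pinning down since you flag it as the delicate point. With the paper's convention that $\ind_{G(q)}^G k$ consists of \emph{left}-$G(q)$-invariant functions $f$ (from $f(hg)=h\cdot f(g)=f(g)$ for $h\in G(q)$) with $G$ acting by right translation, $(x\cdot f)(g)=f(gx)$, and with your Lang map $L(g)=g^{-1}F^r(g)$ (whose fibres are the left cosets $G(q)g$), the identity $L(gx)=x^{-1}L(g)F^r(x)$ transports the $G$-action to $(x\bullet\phi)(z)=\phi\bigl(x^{-1}zF^r(x)\bigr)$. Comparing with $((g,h)\cdot\phi)(z)=\phi(h^{-1}zg)$, this is the pull-back through $x\mapsto(F^r(x),x)$, \emph{not} through $x\mapsto(x,F^r(x))$ as stated. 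To land on $(g,F^r(g))$ with these sign conventions one should instead use the opposite Lang map $L'(g)=F^r(g)g^{-1}$, whose fibres are the right cosets $gG(q)$, paired with right-$G(q)$-invariant functions and the left-translation action $(x\cdot f)(g)=f(x^{-1}g)$; then $L'(x^{-1}g)=F^r(x)^{-1}L'(g)x$ gives precisely $(x\bullet\phi)(z)=\phi\bigl(F^r(x)^{-1}zx\bigr)$. Either variant is ultimately harmless for part (b): pulling back through $(F^r(g),g)$ produces sections $\nabla(\gamma_i)^{[r]}\otimes\nabla(\gamma_i^*)$, and since $\gamma\mapsto\gamma^*$ permutes $X^+$ this is the same set of sections as $\nabla(\gamma_i)\otimes\nabla(\gamma_i^*)^{[r]}$ up to relabelling $\gamma_i\leftrightarrow\gamma_i^*$. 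So your proof is correct in substance; the only real gap is that the sentence claiming $g$ acts on $k[G]$ by $x\mapsto F^r(g)^{-1}xg$ does not actually follow from the displayed identity $L(xg)=g^{-1}L(x)F^r(g)$ (the parenthetical ``up to inversion'' does not rescue it either, since the right-translation action is already a left action), and you should either switch to $L'$ or accept the relabelled filtration.
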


Following \cite{BNP11}, put $\sG_r(k):=\ind_{G(q)}^Gk$, with $q=p^r$. The filtration $\mF_\bullet$ of
the rational $G$-module $\sG_r(k)$ arises from the increasing $G\times G$-module filtration $\mF'_\bullet$ of $k[G]$ with
sections $\nabla(\gamma)\otimes\nabla(\gamma^*)$. Since these latter modules are all co-standard modules for $G\times G$, their order in $\mF'_\bullet$
can be manipulated, using the fact that
\begin{equation}\label{vanishing}
\Ext^1_{G\times G}(\nabla(\gamma)\otimes\nabla(\gamma^*),\nabla(\mu)\otimes\nabla(\mu^*))=0,\quad
{\text{\rm unless $\mu<\gamma$ (and $\mu^*<\gamma^*$).}}\end{equation}
  Thus, for any non-negative integer $b$, there is a (finite dimensional)
$G$-submodule $\sG_{r,b}(k)$ of $\sG_r(k)$ which has an increasing $G$-stable filtration with sections precisely the $\nabla(\gamma)\otimes
\nabla(\gamma^*)^{[r]}$ and $(\gamma,\alpha_0^\vee)\leq b$, and each such $\gamma$ appearing with
multiplicity 1. Such a submodule may be constructed from a corresponding (unique) $G\times G$--submodule of
$k[G]$ with corresponding sections $\nabla(\gamma)\otimes\nabla(\gamma^*)$. With this construction,
the quotient $\sG_r(k)/\sG_{r,b}(k)$ has a $G$-stable filtration with sections $\nabla(\gamma)\otimes\nabla(\gamma^*)^{[r]}$,
$(\gamma,\alpha_0^\vee)>b$. More precisely, using (\ref{vanishing}) again and setting $\sG_{r,-1}(k)=0$,
we have the following result.

\begin{lem}\label{filtrationlemma} For $b\geq 0$,
$$\sG_{r,b}(k)/\sG_{r,b-1}(k)\cong\bigoplus_{\lambda\in X^+, (\lambda,\alpha^\vee_0)=b}\nabla(\lambda)\otimes
\nabla(\lambda^*)^{[r]}.$$
Also, $\bigcup_{b\geq 0}\sG_{r,b}(k) =\sG_r(k).$
\end{lem}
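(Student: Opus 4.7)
The plan is to construct the submodules $\sG_{r,b}(k)$ by pulling back appropriate $G\times G$-submodules of $k[G]$, and then split the associated graded. By Theorem \ref{filtration}(a), $\sG_r(k)$ is the pullback of $k[G]$ along $g\mapsto(g,F^r(g))$. Starting from the $G\times G$-filtration $\mF'_\bullet$ of $k[G]$ from Lemma \ref{Kop}, whose sections are $\nabla(\gamma)\otimes\nabla(\gamma^*)$ with each $\gamma\in X^+$ appearing exactly once, I would reorder $\mF'_\bullet$ using (\ref{vanishing}) so that all sections indexed by the set $\{\gamma\in X^+:(\gamma,\alpha_0^\vee)\leq b\}$ come first. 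This set is downward closed under the dominance order, since $\alpha_0^\vee$ is a dominant coweight (so $(\alpha,\alpha_0^\vee)\geq 0$ for every $\alpha\in\Phi^+$), and therefore $\mu\leq\gamma$ forces $(\mu,\alpha_0^\vee)\leq(\gamma,\alpha_0^\vee)$; the Ext vanishing (\ref{vanishing}) then guarantees the reordering is unobstructed. The reordering produces a $G\times G$-submodule $\sG'_{r,b}\subseteq k[G]$, and its pullback is the desired $G$-submodule $\sG_{r,b}(k)\subseteq\sG_r(k)$, with sections $\nabla(\gamma)\otimes\nabla(\gamma^*)^{[r]}$ for all $(\gamma,\alpha_0^\vee)\leq b$.

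The exhaustion $\bigcup_b\sG_{r,b}(k)=\sG_r(k)$ is immediate: every $\gamma\in X^+$ has some finite value of $(\gamma,\alpha_0^\vee)$, so every section of the full filtration of $\sG_r(k)$ from Theorem \ref{filtration}(b) lies in some $\sG_{r,b}(k)$.

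The crux is the direct-sum identification. The quotient $\sG_{r,b}(k)/\sG_{r,b-1}(k)$ inherits a filtration with sections $\nabla(\lambda)\otimes\nabla(\lambda^*)^{[r]}$ for $\lambda\in X^+$ satisfying $(\lambda,\alpha_0^\vee)=b$. To show this filtration actually splits, I would argue at the $G\times G$ level that $\Ext^1_{G\times G}(\nabla(\lambda)\otimes\nabla(\lambda^*),\nabla(\mu)\otimes\nabla(\mu^*))=0$ for any two distinct $\lambda,\mu$ in this level set. By (\ref{vanishing}), this Ext vanishes unless $\mu<\lambda$ in dominance, in which case $\lambda-\mu$ must be a nonzero $\mathbb{Z}_{\geq 0}$-sum of simple roots all orthogonal to $\alpha_0^\vee$. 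This constrained residual case is the main obstacle; I would resolve it by invoking the uniqueness of the $G\times G$-submodule $\sG'_{r,b}$ noted in the paragraph preceding the lemma, together with Koppinen's explicit realization of the sections in $\mF'_\bullet$ as matrix-coefficient subspaces of $k[G]$, which should force the level-$b$ piece to decompose canonically into its distinct isotypic components $\nabla(\lambda)\otimes\nabla(\lambda^*)$. Pulling back via $g\mapsto(g,F^r(g))$ preserves direct sums, and the claimed $G$-module isomorphism follows.
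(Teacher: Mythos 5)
Your construction of $\sG_{r,b}(k)$ (by reordering $\mF'_\bullet$ at the $G\times G$ level using the downward-closedness of $\{\gamma:(\gamma,\alpha_0^\vee)\leq b\}$, then pulling back) is correct, as is the exhaustion statement. You have also put your finger on the genuinely delicate point: the vanishing (\ref{vanishing}) as stated leaves open the possibility that, among two weights $\lambda,\mu$ on the same level $(\cdot,\alpha_0^\vee)=b$, one has $\mu<\lambda$ via simple roots orthogonal to $\alpha_0^\vee$. That worry is not idle: already in type $A_3$, $(\alpha_2,\alpha_0^\vee)=0$, so e.g.\ $\mu=\rho$ and $\lambda=\rho+\alpha_2$ are both dominant, both on level $h-1$, and $\mu<\lambda$; so the level sets are not antichains in general, and (\ref{vanishing}) alone does not directly kill the relevant $\Ext^1_{G\times G}$.

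However, your proposed resolution of this residual case is not a proof. You write that uniqueness of $\sG'_{r,b}$ together with Koppinen's matrix-coefficient realization of the sections ``should force'' the level-$b$ layer to split into its distinct components, but you give no argument for why uniqueness of a submodule with a prescribed $\nabla$-filtration forces its top layer to decompose. Uniqueness of the filtered submodule is a statement about the filtration, not about a direct-sum decomposition of a subquotient. Moreover, the most tempting way to close the gap---the K\"unneth decomposition
$$\Ext^1_{G\times G}\bigl(\nabla(\lambda)\otimes\nabla(\lambda^*),\,\nabla(\mu)\otimes\nabla(\mu^*)\bigr)\cong
\Ext^1_G(\nabla\lambda,\nabla\mu)\otimes\Hom_G(\nabla\lambda^*,\nabla\mu^*)\ \oplus\ \Hom_G(\nabla\lambda,\nabla\mu)\otimes\Ext^1_G(\nabla\lambda^*,\nabla\mu^*)$$
together with the (false!) belief that $\Hom_G(\nabla\lambda,\nabla\mu)=0$ when $\lambda\neq\mu$---does not work as stated: for $SL_2$ with $p=2$ one has a nonzero surjection $\nabla(2)\twoheadrightarrow L(0)=\nabla(0)$, so $\Hom_G(\nabla(2),\nabla(0))\neq 0$. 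So $\Hom$ between distinct costandards can be nonzero, and the K\"unneth factors require a finer analysis (one must show that at least one of the two tensor factors in each summand vanishes for $\lambda,\mu$ on the same $\alpha_0^\vee$-level, which is not obvious). As it stands, the crucial splitting step in your proposal is not justified, and you should either prove the vanishing of the specific $\Ext^1_{G\times G}$ groups that arise, or replace the appeal to ``canonical decomposition'' with an actual argument (e.g.\ relating $\sG'_{r,b}$ to the decomposition of $k[G]$ into injective hulls $I(L(\gamma)\otimes L(\gamma^*))$ and tracking where the level-$b$ sections sit).
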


 We will usually abbreviate $\sG_r(k)$ to $\sG_r$ and $\sG_{r,b}(k)$ to $\sG_{r,b}$ for $b\geq -1$. We
 remark that $\sG_r(k)$ is in some sense already  an abbreviation, since it depends on the characteristic $p$ of $k$.

For $\lambda,\mu\in X_r^+$, set $\Ext^m_{G(q)}(L(\lambda),L(\mu))$. Because the induction functor
$\ind_{G(q)}^G$ is exact from the category of $kG(q)$-modules to the category of rational $G$-modules,
\begin{equation}\label{induction} \Ext^m_{G(q)}(L(\lambda)\otimes L(\mu^*),k)\cong \Ext^m_G(L(\lambda)\otimes L(\mu^*),\sG_r),\end{equation}
where $q$ is $p^r$.

The following result provides an extension beyond the $m=1$ case treated in \cite[Thm. 2.2]{BNP02}.   A similar result in the cohomology case is given with a bound on $p$ (namely, $p\geq  (2m+1)(h-1)$) for any $m$ by \cite[proof of Cor. 7.4]{BNP01}. Our result, for $\Ext^m$, does not require
any condition on $p$. 

\begin{thm}\label{existsAnF} Let $b\geq 6m+6h-8$, independently of $p$ and $r$, or, more generally,
 \[b\geq b(\Phi,m,p^r):=\left[\frac{3m+3h-4}{1-1/p^r}\right],\]
 when $p$ and $r$ are given. Then, for any $\lambda,\mu\in X^+_r$, we have
 \begin{equation}\label{decomposition}
\Ext^m_{G(q)}(L(\lambda),L(\mu))\cong\Ext^m_G(L(\lambda),L(\mu)\otimes \sG_{r,b}).\end{equation}
(Recall $q=p^r$.) In addition,
\begin{equation}\label{vanishing}
\Ext^n_G(L(\lambda),L(\mu)\otimes\nabla(\nu)\otimes\nabla(\nu^*)^{[r]})=0,\quad\forall n\leq m,  \forall\nu\in X^+
\,\,{\text{\rm satisfying}}\, (\nu,\alpha_0^\vee)>b.\end{equation}
 \end{thm}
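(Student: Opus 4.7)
The approach combines the Shapiro-type isomorphism (\ref{induction}) with the Bendel--Nakano--Pillen filtration $\sG_{r,\bullet}$ from Theorem \ref{filtration} and Lemma \ref{filtrationlemma}. From the short exact sequence $0\to\sG_{r,b}\to\sG_r\to\sG_r/\sG_{r,b}\to 0$ and the associated long exact sequence in $\Ext_G^\bullet(L(\lambda),L(\mu)\otimes -)$, the isomorphism (\ref{decomposition}) follows once we verify that $\Ext_G^n(L(\lambda),L(\mu)\otimes(\sG_r/\sG_{r,b}))=0$ for $n\leq m$. Writing $\sG_r/\sG_{r,b}$ as a filtered colimit of submodules possessing filtrations with sections $\nabla(\nu)\otimes\nabla(\nu^*)^{[r]}$ for $(\nu,\alpha_0^\vee)>b$, and using that $\Ext_G^\bullet(L(\lambda),-)$ commutes with filtered colimits (as $L(\lambda)$ is finite dimensional) and is additive on short exact sequences, this reduces to the vanishing statement (\ref{vanishing}).

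To establish (\ref{vanishing}), recast the Ext as $\opH^n(G,M\otimes\nabla(\nu^*)^{[r]})$ with $M:=L(\lambda)^*\otimes L(\mu)\otimes\nabla(\nu)$, and apply the Lyndon--Hochschild--Serre spectral sequence for $G_r\triangleleft G$, identifying $G/G_r\cong G$ via $F^r$. Because $\nabla(\nu^*)^{[r]}$ is trivial on $G_r$, after the appropriate $[-r]$-untwist the $E_2$-page becomes
\[E_2^{i,j}=\opH^i\bigl(G,W_j\otimes\nabla(\nu^*)\bigr)\Longrightarrow\opH^{i+j}\bigl(G,M\otimes\nabla(\nu^*)^{[r]}\bigr),\]
where $W_j:=\opH^j(G_r,M)^{[-r]}$. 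By Lemma \ref{LenLem}(b,c), $M$ is $b_M$-small with $b_M=2(h-1)(p^r-1)+(\nu,\alpha_0^\vee)$, so by Theorem \ref{MsmallToHsmall}, $W_j$ is $(3j+[b_M/p^r])$-small---a bound essentially of the form $3j+2(h-1)+[(\nu,\alpha_0^\vee)/p^r]$ after absorbing lower-order terms.

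One concludes the vanishing $E_2^{i,j}=0$ for $i+j\leq m$ by combining the weight bound on $W_j$ with standard vanishing results for $\Ext_G^i(L(\tau^*),\nabla(\nu^*))$: strong linkage together with the inequalities $(\alpha_i,\alpha_0^\vee)\geq 0$ for simple $\alpha_i$ (or equivalently, the tensor identity $W_j\otimes\nabla(\nu^*)\cong\ind_B^G(W_j\otimes\nu^*)$ combined with Kempf vanishing and a $B$-cohomology weight analysis) forces every composition factor $L(\tau)$ of $W_j$ to lie outside the linkage class of $\nu^*$ whenever $(\nu,\alpha_0^\vee)>b$. The resulting requirement takes the form $(\nu,\alpha_0^\vee)>3m+O(h)+[(\nu,\alpha_0^\vee)/p^r]$, which rearranges to $b(1-1/p^r)\geq 3m+O(h)$, giving the sharper bound $b\geq[(3m+3h-4)/(1-1/p^r)]$; the uniform $b\geq 6m+6h-8$ then follows from $1/(1-1/p^r)\leq 2$ when $p^r\geq 2$. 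The main obstacle is precisely this self-referential inequality: since the weight bound for $W_j$ itself grows with $(\nu,\alpha_0^\vee)/p^r$, one must invert a recursion with $b$ appearing on both sides, and the interplay between the estimates of Theorem \ref{MsmallToHsmall} (which supplies the $1/p^r$ factor) and the linkage/weight obstruction on $\nabla(\nu^*)$ is what makes the argument uniform in $p$, $r$, $\lambda$, $\mu$, and $\nu$.
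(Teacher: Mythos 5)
Your proposal follows essentially the same route as the paper: reduce to showing $\Ext^n_G$ vanishes against each section $\nabla(\nu)\otimes\nabla(\nu^*)^{[r]}$ with $(\nu,\alpha_0^\vee)>b$, run the Lyndon--Hochschild--Serre spectral sequence for $G_r\triangleleft G$, bound the weights of the $G_r$-cohomology via Theorem \ref{MsmallToHsmall}, and then invoke the highest weight category fact $\Ext^\bullet_G(\Delta(\nu),L(\omega))\neq 0\implies\nu\leq\omega$ (which is what your ``linkage class'' and Kempf/$B$-cohomology remarks are gesturing at, and should be stated as such rather than as strong linkage) to obtain the self-referential inequality in $x=(\nu,\alpha_0^\vee)$. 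The one organizational difference --- keeping $\nabla(\nu)$ wholesale inside $M$ rather than first passing, as the paper does, to a Steinberg-factored composition factor $L(\xi_0)\otimes L(\xi')^{[r]}$ of $\nabla(\nu)$ --- is harmless and in fact yields the marginally tighter intermediate bound $3m+2h-2$ in place of $3m+3h-4$, so your final assertion of the constant $3m+3h-4$ does not match your own bookkeeping (though either constant suffices for the theorem as stated).
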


\begin{proof}It suffices to show that
\begin{equation}\label{vanishingresult} b\geq b(\Phi,m,p^r)\implies \Ext^n_G(L(\lambda),L(\mu)\otimes
\sG_r/\sG_{r,b})=0,\,\forall n\leq m.\end{equation}
Suppose that (\ref{vanishingresult}) fails. Then for some $\nu$ with $(\nu,\alpha^\vee_0)>b$ and some
non-negative integer $n\leq m$, we must have
$\Ext^n_G(L(\lambda),L(\mu)\otimes\nabla(\nu)\otimes\nabla(\nu^*)^{[r]})\not=0$. (That is, (\ref{vanishing})
fails.)
For some composition factor $L(\xi)\cong L(\xi_0)\otimes L(\xi')^{[r]}$ ($\xi_0\in X^+_r$, $\xi'\in
X^+$) of $\nabla(\nu)$, we obtain
\begin{equation}\label{first}\Ext^n_G(\Delta(\nu)^{[r]}\otimes L(\lambda), L(\mu)\otimes L(\xi_0)\otimes L(\xi')^{[r]})\not=0\end{equation}
by rearranging terms. Here we use the fact that $\nabla(\nu^*)^*\cong\Delta(\nu)$, the Weyl module
of highest weight $\nu$. To compute the left-hand side of (\ref{first}) we use a Lyndon-Hochschild-Serre spectral sequence
involving the normal subgroup $G_r$. The $E_2$-page is given by
\begin{equation}\label{spectralsequence} E_2^{i,j}=\Ext_G^i(\Delta(\nu),\Ext_{G_r}^j(L(\lambda)\otimes L(\mu^*), L(\xi_0))^{[-r]}\otimes
L(\xi')),\quad i+j=n.\end{equation}

 Using Lemma \ref{LenLem}(b)(c), we see that the module $L(\lambda^*)\otimes L(\mu)\otimes L(\xi_0)$ is $3(h-1)(p^r-1)$-small; thus by Theorem \ref{MsmallToHsmall} we have $\Ext^j_{G_r}(L(\lambda)\otimes L(\mu^*),L(\xi_0))^{[-r]}=\Ext^j_{G_r}(k,L(\lambda^*)\otimes
L(\mu)\otimes L(\xi_0))^{[-r]}$ is $(3j+3h-4)$-small, and, in particular, is $(3m+3h-4)$-small.

Put $x=(\nu,\alpha_0^\vee)>b$. Then, as $L(\xi)$ is a composition factor of $\nabla(\nu)$, $\xi=\xi_0+p^r\xi'$ is $x$-small. Clearly $p^r\xi'$ is $x$-small also, thus $\xi'$ is $[x/p^r]$-small. So, the composition factors of  $\Ext^j_{G_r}(L(\lambda)\otimes L(\mu^*),L(\xi_0))^{[-r]}\otimes L(\xi')$ are $([x/p^r]+3m+3h-4)$-small. Recall
the fact, for general $\nu,\omega\in X^+$, that $\Ext^\bullet_G(\Delta(\nu),L(\omega))\not=0$ implies
that $\nu\leq \omega$. For $\nu$ as above and $L(\omega)$ a
composition factor of $\Ext^j_{G_r}(L(\lambda)\otimes L(\mu^*),L(\xi_0))^{[-r]}\otimes L(\xi')$, it follows that
$\nu\leq \omega$. Hence, $x=(\nu,\alpha_0^\vee)\leq (\omega,\alpha_0^\vee)\leq 3m +3h-4+[x/p^r]$.
Rearranging this gives
$$ x\leq\left[\frac{3m+3h-4}{1-1/p^r}\right]\leq b,$$
a contradiction. This proves (\ref{decomposition}) and (\ref{vanishing}).

For the remaining part of the theorem, just note that the smallest value of $p^r$ is $2$. Hence, the
largest value of  $v(\Phi,m,p^r)$ is $6m+6h-8$. 
\end{proof}



\section{Digits and cohomology}

Any $\lambda\in X^+$ has a $p$-adic expansion $\lambda=\lambda_0+p\lambda_1+\dots+p^r\lambda_r+\dots$ where each $\lambda_i$ is $p$-restricted. We refer to each pair $(i,\lambda_i)$ as a digit of $\lambda$. We say the $i$th digit of $\lambda$ is $0$ if $\lambda_i=0$. Clearly $\lambda$ has finitely many nonzero digits. Let also $\mu\in X^+$. We say $\lambda$ and $\mu$ agree on a digit if there is a natural number $i$ with $\lambda_i=\mu_i$. We say $\lambda$ and $\mu$ differ on $n$-digits if $|\{i:\lambda_i\neq\mu_i\}|=n$.

The theorem below requires the following result.
\begin{lem}[{\cite[Prop. 3.1]{BNP06}}] \label{tensors}Let $\lambda,\mu\in X_r^+$ and $M$ a finite dimensional rational $G$-module whose (dominant) weights are $(p^r-1)$-small.  Then \[\Hom_{G_r}(L(\lambda),L(\mu)\otimes M)= \Hom_G(L(\lambda),L(\mu)\otimes M).\]
Hence, the left-hand side has trivial $G$-structure.\end{lem}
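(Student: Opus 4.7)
The inclusion $\Hom_G(L(\lambda),L(\mu)\otimes M)\subseteq \Hom_{G_r}(L(\lambda),L(\mu)\otimes M)$ is tautological. For the reverse, the plan is to apply the standard identity $\Hom_G(A,B)=\Hom_{G_r}(A,B)^{G/G_r}$, where $G/G_r\cong G$ (via the Frobenius $F^r$) acts on the $G_r$-Hom-space through the quotient. Thus it suffices to prove the $G$-module $\Hom_{G_r}(L(\lambda),L(\mu)\otimes M)$ has trivial $G$-structure; both the main equality and the concluding ``Hence'' assertion of the lemma then follow.

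Identifying $\Hom_{G_r}(L(\lambda),L(\mu)\otimes M)\cong (L(\lambda)^*\otimes L(\mu)\otimes M)^{G_r}$, this is a $G$-submodule of $V:=L(\lambda)^*\otimes L(\mu)\otimes M$ on which $G_r$ acts trivially and thus has the form $N^{[r]}$ for some rational $G$-module $N$. The goal reduces to showing $N$ is trivial. Lemma~\ref{LenLem}(b) applied to $\lambda,\mu\in X_r^+$ gives that the weights of $L(\lambda)^*$ and $L(\mu)$ are $(h-1)(p^r-1)$-small, and the weights of $M$ are $(p^r-1)$-small by hypothesis. Summing, every weight of $V$ is $((2h-1)(p^r-1))$-small; since weights of $V^{G_r}$ lie in $p^r X$, every weight of $N$ is $(2h-2)$-small per positive coroot.

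Suppose for contradiction that $N$ has a nonzero dominant highest weight $\xi$. The corresponding $B$-highest weight vector of $V^{G_r}$ is a $U^+$-fixed, $G_r$-invariant (hence $U_r^-$-invariant) weight-$p^r\xi$ vector $v\in V$, giving by the universal property of $\Delta(\xi)^{[r]}$ a nonzero $G$-homomorphism $\Delta(\xi)^{[r]}\to V$. Tensor-hom adjunction then yields $\Hom_G(L(\lambda)\otimes\Delta(\xi)^{[r]},L(\mu)\otimes M)\neq 0$, and since $L(\lambda)\otimes\Delta(\xi)^{[r]}$ is a quotient of $\Delta(\lambda)\otimes\Delta(\xi)^{[r]}\cong\Delta(\lambda+p^r\xi)$ (Donkin's Steinberg tensor product theorem for Weyl modules, valid for $\lambda\in X_r^+$), we conclude $\Hom_G(\Delta(\lambda+p^r\xi),L(\mu)\otimes M)\neq 0$: there exists a nonzero $U^+$-fixed vector in $L(\mu)\otimes M$ of dominant weight $\lambda+p^r\xi$ satisfying the Weyl module relations.

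The hard part is to extract a contradiction from this Hom-nonvanishing. The weight inequality $\lambda+p^r\xi\leq\mu+\eta_0$ (where $\eta_0$ is the highest weight of $M$) is compatible with nonzero $\xi$ within the $(2h-2)$-small range, so weight considerations alone do not suffice. The argument of \cite[Prop.~3.1]{BNP06} combines these bounds with the divided-power structure: the $f_\alpha^{[n]}$-actions on both $L(\mu)$ (simple $p^r$-restricted) and $M$ ($(p^r-1)$-small) are compressed within the $(p^r-1)$-range, which is structurally incompatible with the higher Weyl module relations of $\Delta(\lambda+p^r\xi)$ whenever $\xi$ contributes a nonzero $p^r$-digit. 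Ruling out nonzero $\xi$ forces $N$ trivial, completing the proof.
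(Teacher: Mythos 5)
The paper does not actually prove this lemma; it is quoted as \cite[Prop. 3.1]{BNP06}, so there is no internal proof to compare against. What you have written is therefore evaluated as a stand-alone proof attempt.

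Your reduction to showing that $N:=\Hom_{G_r}(L(\lambda),L(\mu)\otimes M)^{[-r]}$ has trivial $G$-action is the right framing, and the $(2h-2)$-smallness bound on $N$ via Lemma~\ref{LenLem}(b),(c) and divisibility of weights by $p^r$ is correct. But there are two serious problems downstream.

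First, the assertion that $\Delta(\lambda)\otimes\Delta(\xi)^{[r]}\cong\Delta(\lambda+p^r\xi)$ for $\lambda\in X_r^+$ is false in general (and is not what Donkin's results say). Already for $G=SL_2$, $p=2$, $\lambda=0$, $\xi=1$: $\Delta(1)^{[1]}$ is $2$-dimensional, while $\Delta(2)$ is $3$-dimensional. What is true is that the tensor product of two Weyl modules admits a $\Delta$-filtration in which $\Delta(\lambda+p^r\xi)$ appears once (as the top section), which is a much weaker statement and does not give the isomorphism you use. Your intermediate chain therefore collapses. (The weaker conclusion you are after, $\Hom_G(\Delta(\lambda+p^r\xi),L(\mu)\otimes M)\neq 0$, is in fact true, but by a different route: a $B^+$-highest weight vector $f$ of weight $p^r\xi$ in $\Hom_{G_r}(L(\lambda),L(\mu)\otimes M)$ is $U^+$-equivariant, so $f(v_\lambda^+)$ is a nonzero $U^+$-fixed vector of weight $\lambda+p^r\xi$ in $L(\mu)\otimes M$, and the universal property of $\Delta(\lambda+p^r\xi)$ applies.)

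Second, and decisively, the last paragraph concedes rather than closes the argument. As you yourself observe, the weight bound $\lambda+p^r\xi\leq\mu+\eta_0$ is compatible with nonzero $\xi$ in the $(2h-2)$-small range (indeed one can sharpen to $(\xi,\alpha_0^\vee)\leq h-1$ and still not reach $\xi=0$), so ``weight considerations alone do not suffice.'' The appeal to a ``divided-power structure'' being ``structurally incompatible with the higher Weyl module relations'' is not an argument. What is needed, and what \cite{BNP06} supplies, is a genuine structural statement — essentially that, because every dominant weight of $M$ pairs with each simple coroot below $p^r$, the $G_rT$-socle of $\hat L_r(\mu)\otimes M$ can only involve $\hat L_r(\sigma)$ with $\sigma\in X_r^+$, i.e., no nonzero $p^r$-shifted component in the top digit. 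That is precisely the step your proposal identifies as ``the hard part'' and then leaves unproved, so the proof as written has a genuine gap.
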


We can now prove the following ``digit bounding" theorem. It both answers the open question
\cite[Question 3.10]{SteSL3} in a strong way, and paves the way for for this rest of this section.

\begin{thm}\label{digits} Given an irreducible root system $\Phi$ and a non-negative integer $m$, there is a constant $\delta=\delta(\Phi,m)$, so that if $\lambda,\mu, \nu\in X^+$, and $\nu$ is $(3m+2h-2)$-small, then \begin{enumerate}\item\[\Ext_G^n(L(\lambda),L(\nu)\otimes L(\mu))\neq 0\] for some $n\leq m$ implies $\lambda$ and $\mu$ differ in at most $\delta$ digits; and
\item \[\Ext_G^m(L(\lambda),L(\mu))\neq 0\] implies $\lambda$ and $\mu$ differ in at most $\delta-\phi$ digits,\end{enumerate}
where $\phi=[\log_p(3m+2h-2)]+1$.
\end{thm}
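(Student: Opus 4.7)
I intend to prove both parts jointly by induction on $m$. Put $b=3m+2h-2$; by Lemma~\ref{LenLem}(a), $\nu$ is $p^\phi$-restricted where $\phi=[\log_p b]+1$. Split the $p$-adic expansions $\lambda=\lambda^{(0)}+p^\phi\tilde\lambda$ and $\mu=\mu^{(0)}+p^\phi\tilde\mu$ with $\lambda^{(0)},\mu^{(0)}$ being $p^\phi$-restricted, so Steinberg's tensor product theorem gives $L(\lambda)=L(\lambda^{(0)})\otimes L(\tilde\lambda)^{[\phi]}$ (and likewise for $\mu$).

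The key reduction is the Lyndon--Hochschild--Serre spectral sequence for $G_\phi\lhd G$ applied to $\Ext^\bullet_G(L(\lambda),L(\nu)\otimes L(\mu))\cong \opH^\bullet(G,L(\lambda)^*\otimes L(\nu)\otimes L(\mu))$. Since $G_\phi$ acts trivially on the $F^\phi$-twisted factors, these pull out of the inner cohomology, and after untwisting by $F^\phi$ the $E_2$-term becomes
\[
E_2^{i,j}\cong \Ext^i_G\bigl(L(\tilde\lambda),\, Y_j\otimes L(\tilde\mu)\bigr),\qquad Y_j:=\bigl[\opH^j(G_\phi,\,L(\lambda^{(0)})^*\otimes L(\nu)\otimes L(\mu^{(0)}))\bigr]^{[-\phi]}.
\]
By Lemma~\ref{LenLem}(b,c) the inner module is $[2(h-1)(p^\phi-1)+b]$-small, so Theorem~\ref{MsmallToHsmall} shows that $Y_j$ is $(3j+2h-2)$-small; the composition factors $L(\nu')$ of $Y_j$ therefore satisfy the theorem's hypothesis.

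If $\Ext^n_G(L(\lambda),L(\nu)\otimes L(\mu))\neq 0$ for some $n\leq m$, the spectral sequence produces a composition factor $L(\nu')$ of some $Y_j$ with $\Ext^i_G(L(\tilde\lambda),L(\nu')\otimes L(\tilde\mu))\neq 0$ for $i+j=n$. When $j\geq 1$, $i\leq m-1$, and the inductive hypothesis on $m$ bounds the digit differences between $\tilde\lambda$ and $\tilde\mu$; adding the at most $\phi$ possible differences in the low parts gives the bound for part~(1). When $j=0$, Lemma~\ref{tensors} applies because $L(\nu)$ is $(p^\phi-1)$-small (as $b<p^\phi$): $Y_0$ is a trivial $G$-module whose nonvanishing forces $\Hom_G(L(\lambda^{(0)}),L(\nu)\otimes L(\mu^{(0)}))\neq 0$, controlled by the base case $m=0$, which is handled by direct Steinberg analysis combined with Lemma~\ref{LenLem}(d) to pin down the digit support of composition factors of $L(\nu)\otimes L(\mu^{(0)})$. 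A secondary induction on the maximum $p$-adic digit position of $\lambda,\mu$ deals with the recurrence on $\tilde\lambda,\tilde\mu$ (which has strictly smaller maximum digit position) via part~(2).

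For part~(2), the same spectral sequence is run with $\nu=0$. The decisive gain is that Lemma~\ref{tensors} with $\nu=0$ forces $\lambda^{(0)}=\mu^{(0)}$ whenever $Y_0$ contributes, eliminating all $\phi$ low-order digit positions as sites of possible disagreement---this is exactly the refinement that transforms the bound $\delta$ of part~(1) into $\delta-\phi$ of part~(2). \textbf{The main obstacle} is the case $j=0$, $i=m$ of the spectral sequence, where the cohomological degree does not decrease; handling it requires the secondary induction on maximum digit position in concert with the trivializing effect of Lemma~\ref{tensors}, and careful bookkeeping is needed to ensure that the constant $\delta(\Phi,m)$ can be chosen independently of $p$ even though the intermediate quantity $\phi$ depends on $p$.
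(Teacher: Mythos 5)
Your proposal is correct and takes essentially the same approach as the paper: both run a joint induction on $m$, split $\lambda,\mu$ at position $\phi$, apply the Lyndon--Hochschild--Serre spectral sequence for $G_\phi\lhd G$, bound the weights of the inner cohomology via Lemma~\ref{LenLem} and Theorem~\ref{MsmallToHsmall}, invoke Lemma~\ref{tensors} to trivialize the $j=0$ layer, and use a secondary minimality/induction argument (the paper takes a minimal counterexample in $X^+_s$, you phrase it as induction on maximum digit position, which is the same device) to close the $E_2^{m,0}$ case where cohomological degree does not drop.
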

\begin{proof}We prove both statements together by induction on $m$. Set $b:= 3m+2h -2$ and $u:=\phi$.
Thus, $u=[\log_p b]+1$.  Let $\lambda^\circ
:=\lambda_0+\cdots + p^{u-1}\lambda_{u-1}$, so that $\lambda=\lambda^\circ + p^u\lambda'$, for $\lambda'\in X^+$; write $\mu$ similarly. By Lemma \ref{LenLem}, the $b$-small weight $\nu$ is $p^u$-restricted. In fact, $b\leq p^u-1$.

The case $m=0$ follows easily from Lemma \ref{tensors}, with $\delta(\Phi,0):=\phi$:

For statement (i), we have
$$\opH:=\Hom_G(L(\lambda),L(\mu)\otimes L(\nu))=\Hom_G(L(\lambda'),\Hom_{G_u}(L(\lambda^\circ),L(\mu^\circ)\otimes L(\nu))^{[-u]}\otimes L(\mu'))$$
with $\opH$ assumed to be nonzero. As $\nu$ is $(p^u-1)$-small,  Lemma \ref{tensors} implies that the module $\Hom_{G_u}(L(\lambda^\circ),L(\mu^\circ)\otimes L(\nu))\cong \Hom_{G}(L(\lambda^\circ),L(\mu^\circ)\otimes L(\nu))$ and so has a trivial $G$-structure. Thus,
 \[\opH=\Hom_G(L(\lambda'),L(\mu'))\otimes \Hom_{G}(L(\lambda^\circ),L(\mu^\circ)\otimes L(\nu)).\]
 If this expression is nonzero, then $\lambda'=\mu'$ and $\lambda^\circ$, $\mu^\circ$ can differ in at most all of their $u=\phi$ places, so that $\lambda$ and $\mu$ can differ in at most $\phi$ places. Statement (ii) is trivial.
This completes the $m=0$ case.

Assume we have found $\delta(\Phi,i)$ for all $i<m$ such that the theorem holds when $i$ plays the role of $m$ and $\delta(\Phi,i)$ plays the role of $\delta(\Phi,m)$.
We claim that the theorem holds at $m$ if we set $\delta=\delta(\Phi,m):=2\phi+\max_{i<m}\delta(\Phi,i)$.

Suppose otherwise. Let $b=(3m+2h-2)$. Then either (i) fails with $n=m$, namely,
\begin{equation}\label{failureofi}\begin{cases}
\Ext^m_G(L(\lambda),L(\nu)\otimes L(\mu))\not=0,\,\,{\text{\rm for some $\lambda,\mu,\nu\in X^+$,}}\\
{\text{\rm with $\nu$ $b$-small and $\lambda$ and $\mu$ differing in more than $\delta$-digits,}}\end{cases}
\end{equation}
or (ii) fails, namely,
\begin{equation}\label{failureofii}
\Ext^m_G(L(\lambda),L(\mu))\not=0,\,\,{\text{\rm $\lambda$ and $\mu$ differing in more than $\delta-\phi$ digits.}}
\end{equation}
Let $\lambda,\mu,\nu\in X^+_s$ be a such a counterexample with $s$ minimal (where in (\ref{failureofii}),
we take $\nu=0$ and, in (\ref{failureofi}), $\nu$ is $b$-small).  We continue to write $\lambda=\lambda^\circ+p^u \lambda'$,
where $\lambda^\circ\in X^+_{u}$, $\lambda'\in X^+$. Similarly for $\mu$.

We investigate $\lambda$ and $\mu$ using the Lyndon--Hochschild--Serre spectral sequence for the normal (infinitesimal)
group $G_{u}\triangleleft G$.  First, suppose
that
\[
 E_2^{m-i,i}:=\Ext^{m-i}_{G}(L(\lambda'),\Ext_{G_u}^{i}(L(\lambda^\circ),L(\nu)\otimes L(\mu^\circ))^{[-u]}\otimes L(\mu'))\not=0,\]
 for
some positive integer $0<i\leq m$.  Then $\Ext^{m-i}_G(L(\lambda'), L(\tau)\otimes L(\mu'))\not=0$ for
some composition factor $L(\tau)$ of $\Ext_{G_u}^{i}(L(\lambda^\circ),L(\nu)\otimes L(\mu^\circ))^{[-u]}$. By Lemma \ref{LenLem}(b),(c), all composition factors of $L(\nu)\otimes L(\mu^\circ)\otimes L(\lambda^\circ)^*$ are $(p^u-1)(2h-2)+b\leq (p^u-1)(2h-1)$-small. (Note that $\nu$ is $(p^u-1)$-small.) Thus, by Theorem \ref{MsmallToHsmall}, $\tau$ is $(3i+2h-2)\leq (3(m-1)+2h-2)$-small.

By induction, $\mu'$ differs from $\lambda'$ in at most $\delta(\Phi,m-i)$ digits. (Apply (i) with $n=m-i$ and $m-1$ playing the role of $m$.)
So the number of digits where $\lambda$ differs from $\mu$ is at most $\phi+\delta(\Phi,m-i)\leq \delta-\phi$
digits. This is a contradiction to (\ref{failureofi}) or to (\ref{failureofii}) in the $\nu=0$ case, as $\lambda$ was assumed
to differ from $\mu$ by more than $\delta-\phi$ digits.
Hence, we may assume that the terms $E_2^{m-i,i}=0$, for all positive integers $i\leq m$.

By assumption, $\Ext^m_G(L(\lambda),L(\nu)\otimes L(\mu))\neq 0$, so,
  $$E_2^{m,0}\cong \Ext_G^{m}(L(\lambda'),\Hom_{G_r}(L(\lambda^\circ),L(\nu)\otimes L(\mu^\circ))^{[-r]}\otimes L(\mu'))\neq 0.$$
  Now by Lemma \ref{tensors}, \[\Hom_{G_r}(L(\lambda^\circ),L(\nu)\otimes L(\mu^\circ))\cong \Hom_G(L(\lambda^\circ),L(\nu)\otimes L(\mu^\circ))\]
has trivial $G$-structure, and
$$E_2^{m,0}\cong \Ext_G^{m}(L(\lambda'), L(\mu')^{\oplus t})\cong (\Ext_G^{m}(L(\lambda'), L(\mu')))^{\oplus t}\neq 0,$$
where $t=\dim \Hom_G(L(\lambda^\circ),L(\nu)\otimes L(\mu^\circ))$ and $\lambda',\mu'\in X_{s-u}^+$.

By minimality of $s$ we have that $\lambda'$ and $\mu'$ differ in at most $\delta-\phi$ places. But $\mu^\circ$ and $\lambda^\circ$ differ in at most all their $u=\phi$ digits. So $\lambda$ and $\mu$ differ in at most $\delta$ places.

This is a contradiction when $\nu\neq 0$. So we may assume $\nu=0$ and
$$E_2^{m,0}\cong \Ext_G^{m}(L(\lambda'),\Hom_{G_u}(L(\lambda^\circ), L(\mu^\circ))^{[-u]}\otimes L(\mu'))\neq 0.$$
The non-vanishing forces $\lambda^\circ=\mu^\circ$. Since $u=\phi>0$, we have, by minimality of $s$,
that 
$$\Ext_G^{m}(L(\lambda'),L(\mu'))\neq 0 \implies \lambda' {\text{\rm and $\mu'$ differ in at most $\delta-u$ places}}.$$
 But as $\lambda$ and $\mu$ agree on their first first $u$ places, so $\lambda$ and $\mu$ differ in at most $\delta-u=\delta-\phi$ places.

This is a contradiction, and completes the proof of the theorem.
\end{proof}

\begin{rem}\label{special} The proof of the theorem implies that if $\Ext^1_G(L(\lambda),L(\mu))\neq 0$, then $\lambda$ and $\mu$ differ in at most $2+2[\log_p(h-1)]$ digits; indeed, following the proof carefully, one sees these digits can be found in a substring of length $2+2[\log_p(h-1)]$:

Let $r=[\log_p(h-1)]+1$. Write $\lambda=\lambda^\circ+p^r\lambda'=\lambda^\circ+p^r\lambda'^\circ+p^{2r}\lambda''$, with $\lambda^\circ,\lambda'^\circ\in X_r^+$, $\lambda',\lambda''\in X_r^+$ and take a similar expression for $\mu$.  If $\Ext^1_G(L(\lambda),L(\mu))\neq 0$ then either: $$\Ext^1_G(L(\lambda'),\Hom_{G_r}(L(\lambda^\circ),L(\mu^\circ))^{[-r]}\otimes L(\mu'))\neq 0,$$  which implies $\lambda^\circ=\mu^\circ$ (and we are done by induction, say on the maximum number of digits of $\lambda$ and $\mu$). Or,  the space $\Hom_G(L(\lambda'),\Ext^1_{G_r}(L(\lambda^\circ),L(\mu^\circ))^{[-r]}\otimes L(\mu'))\neq 0$. Now the weights of $\Ext^1_{G_r}(L(\lambda^\circ),L(\mu^\circ))^{[-r]}$ are $(h-1)$-small by Corollary \ref{extForGr}. By
Lemma \ref{LenLem}(a), $h-1\leq p^r-1$. By Lemma \ref{tensors}, the $G$-structure on $\Hom_{G_r}(L(\lambda'^\circ),L(\tau)\otimes L(\mu'^\circ))$ is trivial for a composition factor $L(\tau)$ of $\Ext^1_{G_r}(L(\lambda_0),L(\mu_0))^{[-r]}$. Hence, we must have $\Hom_G(L(\lambda''),L(\mu''))\neq 0$ and we can identify $\lambda''$ and $\mu''$. Thus, $\lambda$ and $\mu$ differ in their first $2r$ digits, as required. 
\end{rem}

\begin{thm}\label{digitsForG(q)}
For every $m\geq 0$ and irreducible root system $\Phi$, choose any
constant $m'$ so that $3m'+2h+2\geq 6m+6h-8$, and let $d=d(m)=d(\Phi,m)$ be an integer $\geq \delta(\Phi,m')$. Then for all prime powers $q=p^r$, and all $\lambda,\mu\in X_r^+$,
 $$\Ext^m_{G(q)}(L(\lambda),L(\mu))\not=0$$
  implies that $\lambda$ and $\mu$ differ in at most $d$ digits.
 \end{thm}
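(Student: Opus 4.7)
The strategy is to reduce the $G(q)$-Ext to a $G$-Ext via Theorem \ref{existsAnF} and the $\sG_{r,b}$ filtration, pass to composition factors of the filtration sections, and then feed the result into the digit-bounding Theorem \ref{digits}. The role of the constant $m'$ in the statement is to ensure the smallness bound furnished by Theorem \ref{existsAnF} matches the smallness hypothesis of Theorem \ref{digits}.

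Given $\lambda,\mu\in X^+_r$ with $\Ext^m_{G(q)}(L(\lambda),L(\mu))\neq 0$, I would set $b=6m+6h-8$. Theorem \ref{existsAnF} yields
$$\Ext^m_{G(q)}(L(\lambda),L(\mu))\cong \Ext^m_G(L(\lambda),L(\mu)\otimes\sG_{r,b}),$$
so the right-hand side is nonzero. The $G$-module $\sG_{r,b}$ carries the filtration of Lemma \ref{filtrationlemma} with sections $\nabla(\gamma)\otimes\nabla(\gamma^*)^{[r]}$ for $(\gamma,\alpha_0^\vee)\leq b$. Running long exact sequences along this filtration produces some such $\gamma$ for which
$$\Ext^m_G(L(\lambda),L(\mu)\otimes\nabla(\gamma)\otimes\nabla(\gamma^*)^{[r]})\neq 0.$$
Taking composition series of $\nabla(\gamma)$ and $\nabla(\gamma^*)$ and chasing long exact sequences again, one extracts $b$-small dominant weights $\xi_0\leq\gamma$ and $\eta_0\leq\gamma^*$ with
$$\Ext^m_G(L(\lambda),L(\mu)\otimes L(\xi_0)\otimes L(\eta_0)^{[r]})\neq 0.$$

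Since $\mu$ is $p^r$-restricted, Steinberg's tensor product theorem identifies $L(\mu)\otimes L(\eta_0)^{[r]}$ with $L(\mu'')$, where $\mu'':=\mu+p^r\eta_0$; note that $\mu''_i=\mu_i$ for $i<r$, while $\eta_0$ contributes digits only in positions $\geq r$. The hypothesis $3m'+2h+2\geq 6m+6h-8$ is arranged so that the $b$-small weight $\xi_0$ satisfies the smallness input of Theorem \ref{digits}(i) at parameter $m'$. Applying that result to $\Ext^m_G(L(\lambda),L(\xi_0)\otimes L(\mu''))$ (with $n=m\leq m'$) gives that $\lambda$ and $\mu''$ differ in at most $\delta(\Phi,m')$ digits. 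Since $\lambda\in X^+_r$ has zero digits in positions $\geq r$ and $\mu''$ agrees with $\mu$ in positions $<r$, any digit position where $\lambda$ and $\mu$ differ is also a position where $\lambda$ and $\mu''$ differ, and so $\lambda$ and $\mu$ differ in at most $\delta(\Phi,m')\leq d$ digits.

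The hardest part will be the bookkeeping in the composition-factor reduction: one must check that the smallness $(\gamma,\alpha_0^\vee)\leq b$ really does propagate to the irreducible sections $L(\xi_0)$, $L(\eta_0)$, and then that the slack built into the choice of $m'$ absorbs exactly the bound $b=6m+6h-8$ coming from Theorem \ref{existsAnF}, so that Theorem \ref{digits} can be invoked without loss. Everything after that is formal manipulation with Steinberg's theorem and the observation that twisted shifts by $p^r\eta_0$ affect only digit positions $\geq r$.
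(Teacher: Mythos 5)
Your proposal is correct and follows essentially the same path as the paper's proof: apply Theorem \ref{existsAnF} with $b=6m+6h-8$, reduce to composition factors $L(\zeta)\otimes L(\zeta')^{[r]}$ of $\sG_{r,b}$ with $\zeta,\zeta'$ $b$-small, absorb the Frobenius-twisted factor via Steinberg's tensor product theorem, and invoke Theorem \ref{digits}(i) at parameter $m'$. The only cosmetic differences are that you absorb the twisted factor into $L(\mu)$ rather than into $L(\lambda)$ as the paper does, and you make explicit the final digit-position bookkeeping (using that $\lambda$ is $p^r$-restricted and $\mu''$ agrees with $\mu$ below position $r$) that the paper leaves to the reader.
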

 \begin{proof} Let $b=6m+6h-8$.
 By Theorem \ref{existsAnF},
 \[E=\Ext^m_{G(q)}(L(\lambda),L(\mu))\cong \Ext^m_{G}(L(\lambda),L(\mu)\otimes \sG_{r,b}).\]
Also, $\sG_{r,b}$ has composition factors $L(\zeta)\otimes L(\zeta')^{[u]}$ with $\zeta, \zeta'$  in the set  $\Xi$ of $b$-small weights.

 Then we have $\Ext^m_{G}(L(\lambda)\otimes L(\zeta')^{[r]},L(\mu)\otimes L(\zeta))\neq 0$ for some $\zeta,\zeta'\in \Xi$. By our choice of $m'$, $m'\geq m$ and the weight $\zeta$ is $(3m'+2h-2)$-small.
The result now follows from Theorem \ref{digits}.
  \end{proof}

Next, define some notation in order to quote the main result of \cite{CPSK77}:
\begin{enumerate}
\item Let $t$ be the torsion exponent of the index of connection $[X:{\mathbb Z}\Phi]$.
\item For a weight $\lambda$, define $\bar\lambda=t\lambda$. Also let $t(\lambda)$ be the order of $\lambda$ in the abelian group $X/\Phi$. Let $t_p(\lambda)$ be the $p$-part of $t(\lambda)$. Of course one has $t_p(\lambda)\leq t(\lambda)\leq [X:{\mathbb Z}\Phi]$.
\item Let $c(\mu)$ for $\mu$ in the root lattice be the maximal coefficient in an expression of $\mu$ as a sum of simple roots; for $\mu=2\rho$ these values can be read off from \cite[Planches I--IX]{Bourb82}.
\item Let $c=c(\tilde\alpha)$ where $\tilde\alpha$ is the highest long root. One can also find the value of $c$ from [{\it loc.~cit.}].
\item For any $r\in \Z$ and any prime $p$, define $e(r)=[(r-1)/(p-1)]$; clearly $e(r)\leq r-1$.
\item For any $r\in \Z$ and any prime $p$, let $f(r)=[\log_p(|r|+1)]+2$; clearly $f(r)\leq [\log_2(|r|+1)]+2$.\end{enumerate}

Then the main result of \cite{CPSK77} is as follows:

\begin{thm}[{\cite[6.6]{CPSK77}}]\label{CPSK} Let $V$ be a finite dimensional rational $G$-module and let $m$ be a non-negative integer. Let $e$, $f$ be non-negative integers with $e\geq e(ctm)$, $f\geq f(c(\bar\lambda))$ for every weight $\lambda$ of $T$ in $V$. If $p\neq 2$ assume also $e\geq e(ct_p(\lambda)(m-1))+1$.

Then for $q=p^{e+f}$, the restriction map $\opH^n(G,V^{[e]})\to \opH^n(G(q),V)$ is an isomorphism for $n\leq m$ and an injection for $n=m+1$.\end{thm}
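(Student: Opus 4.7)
The plan is to follow the original CPSK strategy, now streamlined using Theorem \ref{filtration}. Since $G/G(q)$ is affine, induction from $G(q)$ to $G$ is exact, so $\opH^n(G(q),V) \cong \opH^n(G, V\otimes\sG_r)$ with $\sG_r = \ind_{G(q)}^G k$. The field automorphism $F^e$ of $G(q)$ yields an isomorphism $V|_{G(q)} \cong V^{[e]}|_{G(q)}$, and under this identification the restriction map of the theorem becomes the map
\[
\opH^n(G, V^{[e]}) \longrightarrow \opH^n(G, V^{[e]} \otimes \sG_r)
\]
induced by the inclusion $\mF_0 = k \hookrightarrow \sG_r$ of the bottom piece of the filtration.

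First, I would feed the filtration of $\sG_r$ (with sections $\nabla(\gamma)\otimes\nabla(\gamma^*)^{[r]}$) into the long exact cohomology sequence to reduce to the vanishing $\opH^n(G, V^{[e]} \otimes \nabla(\gamma) \otimes \nabla(\gamma^*)^{[r]}) = 0$ for all $\gamma \in X^+\setminus\{0\}$ and $n \leq m$, together with injectivity at $n=m+1$. Next, I would attack this vanishing via the Lyndon--Hochschild--Serre spectral sequence for $G_e \triangleleft G$. Since $V^{[e]}$ is $G_e$-trivial and $r = e+f$, untwisting by $F^{-e}$ rewrites the $E_2$-page as
\[
E_2^{i,j} = \opH^i\!\bigl(G,\; V\otimes\nabla(\gamma^*)^{[f]}\otimes \opH^j(G_e,\nabla(\gamma))^{[-e]}\bigr).
\]
Weight estimates in the spirit of Theorem \ref{MsmallToHsmall} show that the weights of $\opH^j(G_e,\nabla(\gamma))^{[-e]}$ are $O(j)$-small, independent of $\gamma$. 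Combined with the coefficient bounds encoded by $c$ and the torsion $t$, this should force every composition factor of the argument of $\opH^i$ to have a highest weight that Kempf vanishing and the strong linkage principle push out of the range supporting non-trivial $G$-cohomology in degrees $\leq m+1$.

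The main obstacle will be transforming this qualitative picture into the precise arithmetic bounds $e \geq e(ctm)$ and $f \geq f(c(\bar\lambda))$ appearing in the statement. The integer $t$ absorbs the torsion of $X/\Z\Phi$, so that $\bar\lambda = t\lambda$ lies in the root lattice with simple-root coefficients controlled by $c$; the function $f(c(\bar\lambda)) = [\log_p(c(\bar\lambda)+1)] + 2$ sets the minimal $p$-adic length so that $p^f$ exceeds the relevant coefficient range; and $e(ctm) = [(ctm-1)/(p-1)]$ measures the length of the Frobenius twist needed so that $p^e\lambda$, minus all degree-$\leq m$ contributions from the spectral sequence, stays strictly dominant at each stage. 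The extra condition $e \geq e(ct_p(\lambda)(m-1))+1$ in odd characteristic compensates for the odd-degree terms in the Koszul-type description of $\opH^\bullet(U_e,k)$ (compare the dichotomy in the proof of Lemma \ref{Lem1}), which appear only when $p\neq 2$ and must be absorbed by one additional unit of $e$. Once these bounds are in place, iterating the vanishing through all filtration quotients of $\sG_r$ yields the isomorphism for $n \leq m$ and the injection for $n = m+1$.
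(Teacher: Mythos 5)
The paper does not prove this theorem: it is quoted verbatim as \cite[6.6]{CPSK77} (see the sentence ``Then the main result of \cite{CPSK77} is as follows'' immediately preceding it). There is therefore no ``paper's own proof'' to compare against, and you should flag that clearly rather than present a reconstruction as if it were verifying the paper's argument.

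That said, your proposal is worth assessing on its own terms, and it differs fundamentally from the original CPSK77 argument. CPSK77 (1977) proceeds via a comparison with $B$- and $B(q)$-cohomology, using injectivity of the restriction $\opH^\bullet(G,-)\to\opH^\bullet(B,-)$ under suitable vanishing hypotheses and a weight-by-weight analysis of the $U$- and $U(q)$-cohomology; the Bendel--Nakano--Pillen filtration of $\sG_r=\ind_{G(q)}^G k$ that underpins your plan (Theorem \ref{filtration}) did not exist until 2011. Your strategy is closer in spirit to Theorem \ref{existsAnF} of the present paper, and it is a sensible modern reorganization: the Eckmann--Shapiro identification $\opH^n(G(q),V^{[e]})\cong\opH^n(G,V^{[e]}\otimes\sG_r)$, the reduction via the filtration of $\sG_r$ to the vanishing of $\opH^n(G,V^{[e]}\otimes\nabla(\gamma)\otimes\nabla(\gamma^*)^{[r]})$ for $\gamma\neq 0$ and $n\leq m$, and the choice of $G_e\triangleleft G$ (so that both $V^{[e]}$ and $\nabla(\gamma^*)^{[r]}$ are $G_e$-trivial) are all correct, and your $E_2$-page is computed correctly.

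The genuine gap is the last step, where all the arithmetic lives. You assert that weight estimates ``should force'' the relevant composition factors out of the cohomologically nontrivial range, but you never deal with the fact that both $\nabla(\gamma^*)^{[f]}$ and $\opH^j(G_e,\nabla(\gamma))^{[-e]}$ grow without bound as $\gamma$ ranges over $X^+\setminus\{0\}$: the weights of the former are on the order of $p^f(\gamma^*,\alpha_0^\vee)$ while those of the latter are on the order of $3j+(\gamma,\alpha_0^\vee)/p^e$ by Theorem \ref{MsmallToHsmall}, and showing that no dominant weight of the full tensor product (including the weights of $V$, controlled by $c(\bar\lambda)$ and $t$) is $W_p$-linked to $0$ for all $\gamma\neq 0$ requires a delicate balance between these two contributions that your sketch does not carry out. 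In particular it is not shown how the precise thresholds $e\geq e(ctm)=[(ctm-1)/(p-1)]$, $f\geq f(c(\bar\lambda))=[\log_p(c(\bar\lambda)+1)]+2$, and the extra $+1$ when $p\neq 2$ emerge; the connection you suggest between the odd-$p$ correction and the Koszul dichotomy in Lemma \ref{Lem1} is a plausible guess but is not verified, and the original CPSK77 constants arise from a different calculation (the $B$-cohomology comparison), so even if your approach succeeds it would likely produce different numerical bounds. As written this is an outline of an alternative proof, not a proof, and it should not be attributed to the paper.
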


We  alert the reader that this result will be applied by first determining $e_0,f_0$ satisfying the inequalities required
of $e$ and $f$ above, respectively, and then checking $e\geq e_0$ and $f\geq f_0$, respectively, for actual
values of $e$ and $f$. which arise in our applications.

\begin{rems}\label{adjustment} (a) As pointed out in \cite[Remark 6.7(c)]{CPSK77}, it is not necessary to
check the numerical conditions in the theorem for each weight $\lambda$ of $V$. It is sufficient to check these
conditions for the highest weights of the composition factors of $V$.

(b) Let $e, f$ be as in Theorem \ref{CPSK}. For any $e'\geq e$, twisting induces a semilinear
isomorphism $\opH^m(G,V^{[e]})\overset\sim\to \opH^m(G,V^{[e']})$. In fact, the theorem implies there are
isomorphisms
\[\begin{CD}
\opH^m(G(p^{e'+f}),V^{[e]}) @<\sim<< \opH^m(G, V^{[e]}) @>\sim>> \opH^m(G(p^{e+f}),V^{[e]}),\end{CD}\]
where on the left-hand side we write $e'+f=e+f'$, $f'=f+e'-e$. Now consider the diagram (of semilinear
maps)
\[\begin{CD}
\opH^m(G, V^{[e']}) @>\sim>> \opH^m(G(p^{e'+f}), V^{[e']})\\
@AA^\text{twisting} A
\\
\opH^m(G,V^{[e]}) @>\sim>> \opH^m(G(p^{e+f}),V^{[e]})\end{CD}\]
The left-hand vertical map is an injection \cite{CPS83}, so since the two cohomology groups on the
right-hand side have the same dimension (notice $V^{[e']}\cong V^{[e]}\cong V$ as $G(q)$-modules
for any $q$), it must be an isomorphism. Compare \cite[Cor. 3.8]{CPSK77}.

In particular, $H^m(G,V^{[e]})\cong \opH_\gen^m(G,V)$ (essentially, by the definition of generic cohomology.)
\end{rems}

\begin{lem}\label{coarseCPSK}\begin{enumerate}\item[(a)]If $\lambda\in X^+_{r'}$, then $f(c(\bar\lambda))\leq r'+[\log_p(tc(2\rho)/2)]+2$. In particular, setting $f_0=f_0(\Phi)=\log_2(tc(2\rho)/2)+2$, we have $f(c(\bar\lambda))\leq r'+f_0$.
\item[(b)]Set $e_0=e_0(\Phi,m)=ctm$. Then, for $\lambda\in X^+$, $e_0\geq e(ctm)$ and, for all $p\neq 2$, $e_0\geq e(ct_p(\lambda)(m-1))+1$.\end{enumerate}\end{lem}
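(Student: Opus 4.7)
For part (a), I would begin by using that $\lambda\in X^+_{r'}$ gives $\lambda=\sum_i c_i\omega_i$ with $0\leq c_i\leq p^{r'}-1$. Expanding each $t\omega_i$ as a nonnegative integer combination of simple roots (the coefficients coming from the inverse Cartan matrix scaled by $t$), and using $\rho=\sum_i\omega_i$, one gets $c(\bar\lambda)\leq (p^{r'}-1)c(t\rho)$. Since $2\rho$ already lies in the root lattice, the simple-root coefficients of $t\cdot 2\rho$ are precisely $t$ times those of $2\rho$, so $c(t\rho)=c(t\cdot 2\rho)/2=tc(2\rho)/2$. Combining these,
\[
c(\bar\lambda)+1\leq \frac{t(p^{r'}-1)c(2\rho)}{2}+1\leq \frac{tp^{r'}c(2\rho)}{2},
\]
where the last step uses $tc(2\rho)\geq 2$. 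Taking $\log_p$, then floors (the integer $r'$ passes through the floor), and adding $2$ yields $f(c(\bar\lambda))\leq r'+[\log_p(tc(2\rho)/2)]+2$. The ``in particular'' statement then follows from $[\log_p x]\leq\log_p x\leq\log_2 x$ for $x\geq 1$ and $p\geq 2$.

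For part (b), both inequalities are routine arithmetic. For the first, $e(ctm)=[(ctm-1)/(p-1)]\leq ctm-1<ctm=e_0$ when $m\geq 1$, and in the edge case $m=0$ one has $e(0)=[-1/(p-1)]=-1\leq 0=e_0$. For the second, I would invoke that $t$ is the exponent of the finite abelian group $X/{\mathbb Z}\Phi$, so $t(\lambda)$ divides $t$ and in particular $t_p(\lambda)\leq t$. For $p\geq 3$ (so $p-1\geq 2$) and $m\geq 1$, this gives
\[
e(ct_p(\lambda)(m-1))+1\leq \frac{ct(m-1)}{p-1}+1\leq \frac{ct(m-1)}{2}+1,
\]
and comparing with $ctm=e_0$ reduces to $2\leq ct(m+1)$, which holds since $c,t\geq 1$. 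The $m=0$ case is checked directly: $e(-ct_p(\lambda))\leq -1$ for any $p\geq 3$ whenever $ct_p(\lambda)\geq 1$.

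The only substantive step is the uniform bound $tc(2\rho)\geq 2$, which I would verify by a brief case check from Bourbaki's tables: the only borderline case is $A_1$, where $t=2$ and $c(2\rho)=1$; for every other irreducible root system one has $c(2\rho)\geq 2$ (indeed, for $A_n$ with $n\geq 2$ the coefficient of the middle simple root in $2\rho$ is $k(n+1-k)\geq 2$, and the other types are even larger). Everything else is direct manipulation of logarithms, floor functions, and the defining formulas for $e$, $f$, and $t$.
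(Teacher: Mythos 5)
Your proposal is correct and follows essentially the same chain of inequalities as the paper's own proof of part (a) — namely $c(\bar\lambda)\leq c(t(p^{r'}-1)\rho)=t(p^{r'}-1)c(2\rho)/2$, then $c(\bar\lambda)+1\leq tp^{r'}c(2\rho)/2$, then take $[\log_p(\cdot)]+2$ — with the added (and welcome) explicit verification of the bound $tc(2\rho)\geq 2$ that the paper uses silently. For part (b) the paper only remarks $t_p(\lambda)\leq t$ and "leaves the details to the reader," and your filled-in arithmetic, including the $m=0,1$ edge cases, supplies exactly those details correctly.
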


\begin{proof}(a) Note that
$c(\bar\lambda)= c(t\lambda)
\leq c(t(p^{r'}-1)\rho)
=t(p^{r'}-1)c(2\rho)/2$ and
$c(\bar\lambda)+1\leq tp^{r'}c(2\rho)/2$.
Thus, $f(c(\bar\lambda))=[\log_p(c(\bar\lambda)+1)]+2\leq  r'+[\log_p(tc(2\rho)/2)]+2$. The second statement is clear.

(b) Note that $t_p(\lambda)\leq t$. We leave the details to the reader.\end{proof}

As an overview for the proof of the following theorem, which becomes quite technical, let us outline the basic strategy. We show that if there is a non-trivial $m$-extension between two  $L(\lambda)$ and $L(\mu)$ which are  $q=p^r$-restricted, one can insist that $r$ is so big that Theorem \ref{digitsForG(q)} implies that the digits of the weights of the two irreducible modules must agree on a large contiguous string of zero digits. Since the cohomology for a finite Chevalley group is insensitive to twisting (as noted above), one can replace the modules with Frobenius twists. The resulting modules are still simple; so wrapping the resulting non-$p^r$-restricted  factors to the beginning we may assume they are $p^r$-restricted high weights $\lambda'$ and $\mu'$, respectively. In particular we can arrange that a large string of zero digits occurs at the end of $\lambda'$ and $\mu'$. This forces $\lambda'$ and $\mu'$ to be bounded away from $q$. The result is that we can apply Theorem \ref{CPSK} above.

  We recall some notation from the introduction. Let $q=p^r$ be a $p$-power. For $e\geq 0$ and $\lambda\in X$, there is a unique $\lambda'\in X^+_r$ such that $\lambda'|_{T(q)}=p^e\lambda|_{T(q)}$. Denote $\lambda'$ by
  $\lambda^{[e]_q}$.

\begin{thm}\label{shiftedGeneric} Let $\Phi$ be an irreducible root system and let $m\geq 0$ be given.

(a)
There exists a non-negative integer $r_0=r_0(\Phi,m)$ such that, for all $r\geq r_0$ and $q=p^r$ for any prime $p$, if $\lambda\in
X^+_r$, then, for some $e\geq 0$, there are  isomorphisms
$$\opH^n(G(q),L(\lambda))\cong\opH(G(q), L(\lambda)^{[e]})\cong \opH^n(G,L(\lambda')),\quad n\leq m,$$
where $\lambda'=\lambda^{[e]_q}$.  (The first isomorphism is semilinear.)  In addition, these isomorphisms can
be factored as
\[\opH^n(G(q),L(\lambda))\cong\opH^n(G(q),L(\lambda'))\cong \opH^n_\text{\gen}(G,L(\lambda'))\cong \opH^n(G,L(\lambda')).\]
Also, for any $\ell\geq 0$, the restriction maps
$$\opH^n(G(p^{r+\ell}),L(\lambda'))
\to \opH^n(G(q),L(\lambda'))\quad n\leq m$$
 are isomorphisms.  

(b) More generally, given a non-negative integer $\epsilon$, there is a constant $r_0= r_0(\Phi,m,\epsilon)
\geq \epsilon$
such that, for all $r\geq r_0$, if $\lambda\in X^+_r$ and $\mu\in X^+_\epsilon$, there exists an $e\geq 0$
and a semilinear isomorphism
$$\Ext^n_{G(q)}(L(\mu),L(\lambda))\cong \Ext^n_G(L(\mu'),L(\lambda')), \quad n\leq m,$$
where $\lambda'=\lambda^{[e]_q}$, $\mu'=\mu^{[e]_q}$. In addition,

 \begin{align*}
 \Ext^n_{G(q)}(L(\mu),L(\lambda)) &\cong \Ext^n_{G(q)}(L(\mu)^{[e]},L(\lambda)^{[e]})
  \cong \Ext^n_{G(q)}(L(\mu'),L(\lambda'))\\
  &\cong \Ext^n_{G,\gen}(L(\mu'),L(\lambda'))
 \cong  \Ext^n_{G}(L(\mu'),L(\lambda'))\\ \end{align*}
 for all $n\leq m$.  Also, for any $\ell\geq 0$, the restriction map $$\Ext^n_{G(p^{r+\ell})}(L(\mu'),L(\lambda'))
\to \Ext^n_{G(q)}(L(\mu'),L(\lambda'))$$ is an isomorphism.    
 \end{thm}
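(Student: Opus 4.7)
The plan is to combine the digit bound of Theorem \ref{digitsForG(q)} with the classical stability Theorem \ref{CPSK}, glued together by the cyclic $q$-shift $\lambda\mapsto\lambda^{[e]_q}$, which acts on the $r$-term $p$-adic digit expansion simply as cyclic rotation by $e$ positions. Part (a) is the specialization $\mu=0$, $\epsilon=0$ of (b), so I describe the argument for (b).

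\emph{Step 1 (Digit bound).} When $\Ext^m_{G(q)}(L(\mu),L(\lambda))\neq 0$, Theorem \ref{digitsForG(q)} yields a constant $d=d(\Phi,m)$ such that $\lambda$ and $\mu$ differ in at most $d$ digits. Since $\mu\in X^+_\epsilon$ has its digits at positions $\geq\epsilon$ equal to zero, the joint set $N\subset\Z/r\Z$ of positions at which either weight is nonzero satisfies $|N|\leq d+\epsilon$. (If the $\Ext$ vanishes, Theorem \ref{digits} for $\Ext^*_G$ ensures the right-hand side also vanishes, so any $e$ is adequate.)

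\emph{Step 2 (Pigeonhole and choice of shift).} Put $e_0:=ctm$ as in Lemma \ref{coarseCPSK}(b), and let $f_0'$ be a mild enlargement of $f_0$ from Lemma \ref{coarseCPSK}(a) by an additive term depending only on $h$, to accommodate composition factors of the tensor product in Step 3 via Lemma \ref{LenLem}(d). Define
\[r_0=r_0(\Phi,m,\epsilon):=(d+\epsilon+1)(e_0+f_0'+1)+\epsilon.\]
For $r\geq r_0$, pigeonhole on $\Z/r\Z$ forces a gap in $\Z/r\Z \setminus N$---consecutive positions where both $\lambda$ and $\mu$ have zero digits---of length at least $e_0+f_0'$. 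Choose $e$ so that the cyclic shift places this zero gap straddling the boundary between positions $r-1$ and $0$, providing $\lambda':=\lambda^{[e]_q}$ and $\mu':=\mu^{[e]_q}$ simultaneously with a zero head of length $f\geq e_0$ at $[0,f-1]$ and a zero tail of length $\geq f_0'$ at $[f+c,r-1]$. Writing $\lambda'=p^f\lambda_0''$ and $\mu'=p^f\mu_0''$, we obtain $\lambda_0'',\mu_0''\in X^+_c$.

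\emph{Step 3 (CPSK and chaining).} Apply Theorem \ref{CPSK} with base module $V:=L(\mu_0'')^*\otimes L(\lambda_0'')$ and twist $f$. By Lemma \ref{LenLem}(d) the composition factors of $V$ are $p^{c'}$-restricted for $c'=c+O_\Phi(1)$, and Lemma \ref{coarseCPSK}(a) bounds $f(c(\bar{\,\cdot\,}))\leq c+f_0'$ for their highest weights. The conditions $f\geq e_0$ and $r-f\geq c+f_0'$ prepared in Step 2 coincide with the hypotheses of Theorem \ref{CPSK}, yielding
\[\Ext^n_G(L(\mu_0'')^{[f]},L(\lambda_0'')^{[f]})\,\cong\,\Ext^n_{G(q)}(L(\mu_0''),L(\lambda_0''))\qquad(n\leq m).\]
Since $L(\lambda_0'')^{[f]}\cong L(\lambda')$ (and similarly for $\mu$), the left side equals $\Ext^n_G(L(\mu'),L(\lambda'))$. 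On the right, successively twisting by $F^f|_{G(q)}$ and then $F^e|_{G(q)}$---each an automorphism of $G(q)$---gives the semilinear isomorphisms $\Ext^n_{G(q)}(L(\mu_0''),L(\lambda_0''))\cong\Ext^n_{G(q)}(L(\mu'),L(\lambda'))\cong\Ext^n_{G(q)}(L(\mu),L(\lambda))$. The identification with $\Ext^n_{G,\gen}(L(\mu'),L(\lambda'))$ follows from Remark \ref{adjustment}(b), since writing $L(\lambda')=L(\lambda_0'')^{[f]}$ with $f\geq e_0$ already places us at the stable value. The restriction iso for $\ell\geq 0$ follows by running the same argument at $q$ and at $p^{r+\ell}$ and noting both sides compute the same stable value.

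\emph{Main obstacle.} Step 2 is the heart of the argument: a \emph{single} shift $e$ must serve both $\lambda$ and $\mu$ at once. The trick is to bound the \emph{joint} set $N$ rather than $\lambda$ alone, which is what makes $r_0$ linear in $\epsilon$. A secondary subtlety is the CPSK adjustment in Step 3, where composition factors of $L(\mu_0'')^*\otimes L(\lambda_0'')$ force an enlargement of $f_0$ to $f_0'=f_0+[\log_p(h-1)]+2$ via Lemma \ref{LenLem}(d); keeping careful track of these constants is what pins down the precise dependence of $r_0$ on $\Phi$, $m$, and $\epsilon$.
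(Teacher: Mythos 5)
Your proposal follows the same blueprint as the paper's proof and is essentially correct: bound the number of digits where $\lambda$ and $\mu$ can differ via Theorem \ref{digitsForG(q)}, pigeonhole to find a long common string of zero digits, choose a cyclic $q$-shift $e$ so that both shifted weights simultaneously have a zero head (of length $\geq e_0=ctm$) and a zero tail, then apply Theorem \ref{CPSK} (via Lemma \ref{coarseCPSK}) to the tensor product $L(\mu^\circ)^\ast\otimes L(\lambda^\circ)$ to obtain $\Ext^n_{G(q)}\cong\Ext^n_G$; the identification with generic $\Ext$ and the stability of the restriction maps for $\ell\geq 0$ follow exactly as you indicate, since the CPSK hypotheses remain satisfied at all larger powers of $p$. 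The paper phrases the pigeonhole linearly (finding a gap after position $\epsilon-1$ and then rotating it to the tail, with $r_0=(d'+1)(e_0+f_0+g+1)+\epsilon-1$ and $g=[\log_2(h-1)]+2$), whereas you phrase it cyclically on $\Z/r\Z$ via the joint nonzero-position set $N$; these are equivalent and give $r_0$ of the same shape. One minor omission: the digit-bound constant should be $d'=\max_{n\leq m}d(\Phi,n)$ so the bound applies simultaneously for all degrees $n\leq m$, which the paper does explicitly; and you should also record the trivial case where $\lambda$ and $\mu$ already differ in more than $d'$ digits (all relevant $\Ext$ groups on both sides then vanish, so $e=0$ works). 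Neither affects the substance.
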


\begin{proof}Clearly, (a) is a special case of (b), so it suffices to prove (b).

By Theorem \ref{digitsForG(q)}, there is a constant $d=d(\Phi,n)$ so that, given $\lambda,\mu\in X^+_r$,
then $\lambda$ and $\mu$ differ in at most $d$ digits if $\Ext_{G(q)}^n(L(\mu),L(\lambda))\not=0$. 
We  may take $d(\Phi,n)\geq \delta(\Phi,n)$, where the latter constant satisfies Theorem \ref{digits}, so that $\lambda$
and $\mu$ differ in at most $d$ digits if $\Ext_G^n(L(\mu),L(\lambda))\not=0$.\footnote{For convenience, we are quoting
Theorems \ref{digitsForG(q)} and \ref{digits} with $\lambda$ and $\mu$ in reverse order. This does not
cause any problem.} Of course, we may also take $d\geq\epsilon$. If $e'>0$, these comments apply equally well to $p^{e'}\lambda$ and $p^{e'}\mu$.
 So
$\lambda$ and $\mu$, whose digits correspond naturally to those of $p^{e'}\lambda$ and $p^{e'}\mu$, differ in at most
$d$ digits if $\Ext^n_G(L(\mu)^{[e']}, L(\lambda)^{[e']})\not=0$. 
Therefore, if $\lambda$ and $\mu$ differ in more than $d=d(\Phi,n)$ digits, the claims of (b) hold.

In the same spirit, $\lambda$ and $\mu$ differ in at most $d$ digits, if $\Ext^n_{G(q')}(L(\mu),L(\lambda))
\not=0$ for some larger power $q'$ of $p$.
Otherwise, the relevant $\Ext$-groups all vanish, the isomorphism of (b) hold with
  $\lambda=\lambda'$, $\mu=\mu'$ and $e=0$.  

Put $d'=\max_{n\leq m} d(\Phi,n)$. By the discussion above, we can assume that  $\lambda$ and $\mu$ differ by at most $d'$ digits.  Recall also the constants $e_0=e_0(\Phi,m),f_0=f_0(\Phi)$ from Lemma \ref{coarseCPSK}. Set
$$r_0:=r_0(\Phi,m,\epsilon)=(d'+1)(e_0+f_0+g+1)+\epsilon-1,$$
where $g=g(\Phi):=[\log_2(h-1)]+2\geq [\log_p2(h-1)]+1$. We claim that (b) holds for $r_0$.
The hypothesis of (b) guarantees that $\mu\in X^+_\epsilon$, and $r\geq r_0$.  Observe
$r_0\geq\epsilon$, so that $\mu\in X_\epsilon^+$ is $p^r$-restricted. Also, $\lambda\in X^+_r$ under the
hypothesis of (b). In particular,  every composition factor $L(\tau)$ of $L(\mu^*)\otimes L(\lambda)$ is $p^{r+g}$-restricted by Lemma \ref{LenLem}.

For the remainder of the proof, we may assume that $\lambda$ and $\mu$ differ by at most $d'$ digits. 
By the ``digits" of $\lambda$ and $\mu$, we will mean just the first $r$ digits, the remainder being zero.
By hypothesis, $\mu\in X^+_\epsilon$, so all of its digits after the first $\epsilon$ digits are zero digits.
We claim that $\lambda$ and $\mu$ have a common string of at least $(r-\epsilon+1)/(d'+1)-1$ zero digits.
To see this, let $x$ denote the
longest string (common to both $\lambda$ and $\mu$) of zero digits after the first $\epsilon$ digits. Our
claim is equivalent (by arithmetic) to the assertion that 
 \begin{equation}\label{equivalent}
 (x+1)(d'+1)\geq r-\epsilon+1.\end{equation} 
 To see (\ref{equivalent}), call a digit position which is not zero for one of $\lambda$ or $\mu$ an
 exceptional position. Thus, 
every digit position past the first
$\epsilon$ positions is either one of the (at most) $d'$ exceptional digits, or occurs in a common string in $\lambda$ and $\mu$ of at
most $x$ zero digits, either right after an exceptional position, or right before the first exceptional position
(after the $\epsilon-1$ position). So, after the first $\epsilon-1$ positions,  there are
at most $d'+1$ strings of common zero digits, each of length at most $x$. Hence,  $(d'+1)x + d'\geq r-\epsilon$.
That is, $(d'+1)(x+1)\geq r-\epsilon+1$. This proves the inequality (\ref{equivalent}) and, thus, the claim.

Also, $x\geq (r-\epsilon+1)/(d'+1)-1\geq f_0+e_0+g_0$ since, by hypothesis,
$r\geq r_0=(d'+1)(f_0+e_0+g+1)+\epsilon-1$.

  We can take Frobenius twists $L(\lambda)^{[s]}$ and $L(\mu)^{[s]}$, with $s$ a non-negative
  integer, so that, up to the $r$th digit, the last $e_0+f_0+g$ digits of $\lambda^\circ:=\lambda^{[s]_q}$ and $\mu^\circ:=\mu^{[s]_q}$
  are zero.
   In particular, $\lambda^\circ$ and $\mu^\circ$ both belong to $X^+_{r-e_0-f_0-g}$. We are going to use for
  $e=e(\lambda,\mu)$ in (b) the integer $s+e_0$.
  We have
\[\Ext^n_{G(q)}(L(\mu^\circ),L(\lambda^\circ))\cong \opH^n(G(q),L(\mu^\circ)^*\otimes L(\lambda^\circ)),\]
and, by Lemma \ref{LenLem} again, the composition factors of
$M=L(\mu^\circ)^*\otimes L(\lambda^\circ)$ are in $X^+_{r-e_0-f_0-g+g}=X^+_{r-e_0-f_0}$. Let $L(\nu)$ be a composition factor of $M$. Then at least the last $e_0+f_0$ digits of $\nu$ are zero. Now, using Lemma \ref{coarseCPSK}, the weights of $M$ satisfy the hypotheses of Theorem \ref{CPSK}, and, thus, we have $\opH^n(G(q),M)\cong \opH^n(G,M^{[e]})$ for all $n\leq m$.  The same isomorphism holds if $q$ is replaced by any
larger power of $p$, so $\opH^n(G, M^{[e_0]})\cong\opH^n_\gen(G,M^{[e_0]})$.

Observe that $L(\lambda^{[e]_q})=L(\lambda^\circ)^{[e_0]}$, with a similar equation using $\mu$.
From the definition of $M$ above,
 $$\opH^n(G,M^{[e_0]})\cong \Ext_G^n(L(\lambda^\circ)^{[e_0]},L(\mu^\circ
)^{[e_0]})\cong\Ext^n_G(L(\lambda^{[e]_q}), L(\mu^{[e]_q})),\quad\forall n,  0\leq n\leq m,$$
and similar isomorphisms hold for $G(q)$-cohomology and $\Ext^n_{G(q)}$-groups.
We now have most of the isomorphisms needed in (b), with the remaining ones
obtained from group automorphisms on $G(q)$.

 This completes the proof.
 \end{proof}

\begin{rem}\label{needATwist}
\cite[Thm. 5.6]{BNP06} shows that when $m=1$,  $r\geq 3$, $p^r\geq h$, then, with  $e=[(r-1)/2]$, we have $\Ext^1_{G(q)}(L(\lambda),L(\mu))\cong \Ext^1_{G}(L(\lambda)^{[e]_q},L(\mu)^{[e]_q})$.

It is tempting to think, as suggested in \cite[Question 3.8]{SteSL3}, that one might have similar behavior for higher values of $m$ for some integer $e\geq 0$ under reasonable conditions. Unfortunately, for $p$ sufficiently large, this is never true:

In \cite[Thm. 1]{SteUnb} the third author gives an example of a module $L_{r-1}$ for $SL_2$ over $\bar F_p$ with $p>2$ with the property that the dimension of $\Ext^2_G(L_{r-1},L_{r-1})=r-1$. Specifically, $L_r$ is $L(1)\otimes L(1)^{[1]}\otimes\dots \otimes L(1)^{[r]}$. So $L_{r-1}$ is $q=p^r$-restricted; it is self-dual, since this is true for all simple $SL_2$-modules; it also has the property that $L_{r-1}^{[e]_q}=L_{r-1}$ for any $e\in\N$. Note that as $L(2)$ is isomorphic to the adjoint module for $p>2$, we have $\dim \opH^2(G(q),L(2)^{[i]})\geq 1$ for any $i\geq 0$.
Set $D=\dim\Ext^2_{G(q)}(L_{r-1},L_{r-1})$. We show $D>\dim\Ext^2_G(L_{r-1},L_{r-1})=r-1$.

We have \begin{align*}D&\cong \Ext^2_{G(q)}(k,L_{r-1}\otimes L_{r-1})\\
&\cong \opH^2(G(q), L_{r-1}\otimes L_{r-1})\\
&\cong \opH^2(G(q), (L(1)\otimes L(1))\otimes (L(1)\otimes L(1))^{[1]}\otimes\dots\otimes (L(1)\otimes L(1))^{[r-1]})\\
&\cong \opH^2(G(q), (L(2)\oplus L(0))\otimes (L(2)\oplus L(0))^{[1]}\otimes \dots\otimes(L(2)\oplus L(0))^{[r-1]})\\
&\geq \opH^2(G(q), L(2)\oplus L(2)^{[1]}\oplus\dots\oplus L(2)^{[r-1]})\\
&\geq r>r-1,\end{align*}
as required.

Indeed, \cite[Rem. 1.2]{SteUnb} gives a recipe for cooking up such examples for simple algebraic groups having any root system---one simply requires $p$ large compared to $h$.

Essentially the problem as found above can be described by saying that $\Ext^2_G(L_{r-1},L_{r-1})$ is not rationally stable. One does indeed have $\dim \Ext^2_G(L_{r-1}^{[1]},L_{r-1}^{[1]})=D$.\end{rem}

Motivated by the above example, we ask the following question, a modification of \cite[Question 3.8]{SteSL3}
\begin{ques}\label{conjecture} Let $e_0=e_0(\Phi,m):=ctm$. Does there exist a constant $r_0=r_0(\Phi,m)$, such that for all $r\geq r_0$, the following holds:

 For $q=p^r$, if $\lambda,\mu\in X^+_r$, then there exists a non-negative integer
 $e=e(\lambda,\mu)$ such that

 \begin{align*}
 \Ext^n_{G(q)}(L(\lambda),L(\mu)) &\cong \Ext^n_{G(q)}(L(\lambda)^{[e]},L(\mu)^{[e]})
  \cong \Ext^n_{G(q)}(L(\lambda^{[e]_q}),L(\mu^{[e]_q}))\\
  &\cong \Ext^n_{G,\text{\rm gen}}(L(\lambda^{[e]_q}),L(\mu^{[e]_q}))
 \cong  \Ext^n_{G}(L(\lambda^{[e]_q})^{[e_0]},L(\mu^{[e]_q})^{[e_0]})\\ \end{align*}
 for $n\leq m$?\end{ques}

\begin{rem}We make the simple observation that in the theorems above one needs the weights $\lambda$ and $\mu$ to be $p^r$-restricted, hence, these weights determine simple modules for $G(q)$. For instance, with the notation of the previous remark, again with $G=SL_2$ and $p>2$, $\opH^0(G,L_{2n-1})=0$, but \[\opH^0(G(p),L_{2n-1})\cong\opH^0(G(p),(L(2)+L(0))^{\otimes n})\not=k,\]
with a similar phenomenon occurring for larger values of $q$. So even the $0$-degree cohomology of $G$ will not agree with that of $G(q)$ on general simple $G$-modules.\end{rem}

Using results of \cite{BNP01}, we draw the following striking corollary from the main result of this section
(and paper). Let $W_p=W\ltimes p{\mathbb Z}\Phi$ be the affine Weyl group and
$\widetilde W_p=W\ltimes p{\mathbb Z}X$ be the extended affine Weyl group for $G$. Both groups
 act on
$X$ by the ``dot" action: $w\cdot\lambda= w(\lambda+\rho)-\rho$. 

\begin{thm}\label{summary} For a given non-negative integer $m$ and irreducible root system $\Phi$,  there is, for
all but finitely many prime powers $q=p^r$, an isomorphism
\begin{equation}\label{iso}\opH^m(G(q), L(\mu))\cong \opH^m(G,L(\mu')), \quad \mu\in X_r^+,\end{equation}
for some constructively given dominant weight $\mu'$. If $r$ is sufficiently large, we can take
$\mu'\in X^+_r$ to be a $q$-shift of $\mu$.  If $p$ is sufficiently large, and if $\mu$ is $\widetilde W_p$-conjugate to 0, then we can take $\mu'=\mu$.)

In particular, there is a bound $C=C(\Phi,m)$ such that $\dim \opH^m(G(q), L(\mu))\leq C$ for all values of $q$ and $q$-restricted weights $\mu$.\end{thm}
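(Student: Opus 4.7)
The plan is to patch together Theorem \ref{shiftedGeneric} (which handles all primes $p$ provided $r$ is large) with the large-prime results of \cite[Thm. 7.5]{BNP01} (which handle any $r$ provided $p$ is large relative to $r,m,\Phi$), so that the set of pairs $(p,r)$ not covered by either source is finite. A uniform dimension bound then follows by combining the resulting identification with rational cohomology, the uniform rational bound of \cite[Thm.~7.1]{PS11}, and the digit-bounding Theorem \ref{digitsForG(q)} to control the remaining exceptional values of $q$.

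More precisely, first I would set $r_0=r_0(\Phi,m,0)$ as in Theorem \ref{shiftedGeneric}(a). For every $r\geq r_0$, any prime $p$, and every $\mu\in X^+_r$, that theorem produces $e\geq 0$ with $\mu':=\mu^{[e]_q}\in X^+_r$ and
\[
\opH^m(G(q),L(\mu))\cong \opH^m(G,L(\mu')),
\]
giving the middle claim of (a) (the $q$-shift version) and covering all $q=p^r$ with $r\geq r_0$. Second, for each of the finitely many remaining values $r\in\{1,\dots,r_0-1\}$, I would invoke \cite[Thm.~7.5]{BNP01} to produce a threshold $p_0(\Phi,m,r)$ such that, for all primes $p\geq p_0$ and all $\mu\in X^+_r$, one has $\opH^m(G(p^r),L(\mu))\cong \opH^m(G,L(\mu))$ (or, equivalently, an explicit shift of $\mu$). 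This excludes only finitely many primes for each of the finitely many small $r$, hence only finitely many pairs $q=p^r$ overall. For the third assertion of (a), I would apply the same large-$p$ machinery specialized to weights $\tilde W_p$-conjugate to $0$: such weights lie in a dot-orbit on which alcove arguments (Kempf vanishing, $L(\mu)=\nabla(\mu)$ in the relevant alcove, and the BNP01 identification) allow one to take $\mu'=\mu$ itself, with no shift.

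For the uniform bound, observe that once the isomorphism $\opH^m(G(q),L(\mu))\cong\opH^m(G,L(\mu'))$ is in force, \cite[Thm.~7.1]{PS11} bounds the right-hand side by a constant depending only on $\Phi$ and $m$ (the bound being independent of $p$ and the weight). This takes care of all $q=p^r$ falling into the cases handled above. Finally, for each of the finitely many exceptional pairs $(p,r)$ not covered, Theorem \ref{digitsForG(q)} (applied with $\lambda=0$) forces every $\mu$ with $\opH^m(G(q),L(\mu))\neq 0$ to have at most $d=d(\Phi,m)$ nonzero digits; for fixed $p$ and $r$ this confines $\mu$ to a finite subset of $X^+_r$, so the maximum of $\dim\opH^m(G(q),L(\mu))$ over $\mu$ is finite. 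Taking the maximum over the finitely many exceptional $q$ and combining with the uniform rational bound yields the desired constant $C=C(\Phi,m)$.

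The main obstacle will be ensuring that \cite[Thm.~7.5]{BNP01} (or the closely related results in that paper) delivers cleanly, for each fixed small $r$, a bound $p_0(\Phi,m,r)$ above which the cohomology comparison (possibly with a shift) holds for \emph{all} $\mu\in X^+_r$, since one must track how the shift interacts with the $\tilde W_p$-orbit structure to recover the final bullet of (a). The remainder is a careful bookkeeping of which $(p,r)$ are handled by which theorem, and a routine verification that the digit-bounding result renders the exceptional contributions to the dimension bound finite.
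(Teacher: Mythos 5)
Your proof is essentially the paper's own argument: Theorem~\ref{shiftedGeneric} handles all $r\geq r_0$; the large-prime results of \cite{BNP01} (Cor.~7.4 for $p$-singular $\mu$, Thm.~7.5 for $p$-regular $\mu=u\cdot\nu$ with $\nu$ small, and the first paragraph of the proof of Thm.~7.5 for the remaining $p$-regular $\mu$) handle $p$ large (the paper uses a threshold $p\geq(4m+1)(h-1)$ that is in fact independent of $r$); the remaining finitely many $q=p^r$ are treated directly, with \cite[Thm.~7.1]{PS11} giving the uniform rational bound; and taking $\nu=0$ in the Thm.~7.5 case gives $\mu'=\mu$ when $\mu$ is $\widetilde W_p$-conjugate to $0$. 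The only superfluous step in your writeup is the final invocation of Theorem~\ref{digitsForG(q)}: for each fixed $q=p^r$ the set $X^+_r$ is already finite, so $\max_{\mu\in X^+_r}\dim\opH^m(G(q),L(\mu))$ is automatically finite without digit bounding, which is exactly how the paper closes.
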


\begin{proof} We first prove the assertions in the first paragraph. It suffices to treat the case $m>0$. If $r$ is sufficiently large, then (5.4) holds by Theorem \ref{shiftedGeneric} for some $q$-shift
$\mu'\in X^+_r$ of $\mu$. On the other hand, suppose that $p\geq (4m+1)(h-1)$.    We can assume that $\mu$ is $p$-regular, otherwise \cite[Cor. 7.4]{BNP01} tells us that
$\opH^m(G(q),L(\mu))\cong\opH^m(G,L(\mu))=0$, and there is nothing to prove. 

Suppose $\mu=u\cdot \nu$ for some $\nu\in X^+_r$ satisfying $(\nu,\alpha_0^\vee)\leq 2m(h-1)$, and
some $u\in \widetilde W_p$. Then
\cite[Thm. 7.5]{BNP01} states that
$$\opH^m(G(q),L(\mu))\cong\opH^m(G,L(u\cdot 0+p^r\nu)),$$
so that (\ref{iso}) holds in this case. If $\mu$ does not have the form $\mu=u\cdot \nu$ as above, set
$\mu'=\mu$. The first paragraph of the proof of \cite[Thm. 7.5]{BNP01} shows that $\opH^m(G(q),L(\mu))=0$.
Also, $\opH^m(G,L(\mu'))=\opH^m(G,L(\mu))=0$, by the Linkage Principle, since $\mu$ is not $W_p$-linked to 0. 

It remains to prove the statement in the second paragraph. We have just established that there is a number $q_0$ such that for all prime powers $q=p^r\geq q_0$,  for any $\mu\in X^+_r$, there exists a $\mu'\in X^+$
such that $\opH^m(G(q), L(\mu))\cong \opH^m(G,L(\mu'))$. By \cite[Thm. 7.1]{PS11}, the 
numbers $\dim \opH^m(G,L(\mu'))$ are bounded by a constant $c=c(\Phi,m)$ depending only on $m$ and $\Phi$. Let $c'=\max\{\dim \opH^m(G(q), L(\mu)\}$, the maximum taken over all prime powers $q=p^r<q_0$ and weights $\mu\in X_r^+$; clearly $c'$ is finite. Then  $\dim \opH^m(G(q), L(\mu))\leq \max\{c',c\}$. 
 \end{proof}

 The explicit bounds exist for $r$ to be sufficiently large, see Theorem \ref{shiftedGeneric}. The explicit bound on $p$ in the proof can be improved using
 Theorem \ref{thm6.2}(c). The constructive dependence of $\mu'$ on $\mu$ merely involves the combinatorics
 of weights and roots.
 
We can give the following corollary, addressed more thoroughly in \S6 below; see Theorem \ref{thm6.2}(c) and Theorem \ref{lastcor}.
 
 \begin{cor} If $p$ is sufficiently large, depending on $\Phi$ and the non-negative integer $m$, every weight
 of the form $p\tau$, $\tau\in X^+$, is $m$-generic at $q$, any power $q$ of $p$ for which $p\tau$ is $q$-restricted.
 In addition, if $\mu\in X^+$ is $q$-restricted, and has a zero digit in its $p$-adic expansion
 $\mu=\mu_0+p\mu_1+\cdots+p^{r-1}\mu_{r-1}$ ($p^r=q$), then $\mu$ is shifted $m$-generic at $q$. Moreover,
 in the first case, 
 $$\opH^m(G(q),L(p\tau))\cong\opH^m(G(q),L(\tau))\cong \opH^m_{\gen}(G),L(\tau))\cong \opH_\gen^m(G,L(p\tau)),$$ 
 and, in the second case,
 $$\opH^m(G(q),L(\mu'))\cong\opH^m(G(q),L(\mu))\cong \opH^m_{\gen}(G,L(\mu))\cong \opH_\gen^m(G,L(\mu')).$$ 
 \end{cor}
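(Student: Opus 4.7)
The plan is to derive both parts of the corollary as direct consequences of Theorem \ref{thm6.2}(c) together with the large-prime generic cohomology result \ref{lastcor} from \S6. The unifying observation is that both ``$p\tau$'' and ``$\mu$ with a zero digit'' are instances where the given weight shares a zero digit with the trivial weight $0$, which is exactly the hypothesis needed for the shifted-generic conclusion of Theorem \ref{thm6.2}(c) when one rewrites the cohomology as an $\Ext^m_{G(q)}$ against $L(0)$.

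I would handle the second statement first, since it is the more general one. Suppose $\mu \in X^+$ is $q$-restricted with $\mu_i = 0$ for some $i$. View $\opH^m(G(q),L(\mu)) \cong \Ext^m_{G(q)}(L(0),L(\mu))$; the weight $0$ trivially has a zero in every digit, so $\mu$ and $0$ share a zero digit. Apply Theorem \ref{thm6.2}(c), taking $\lambda = \mu$ and the other weight to be $0$. This produces the desired shifted-generic chain, with $\mu'$ identified as the distinguished $q$-shift of $\mu$ that places the zero digit at the optimal end position. The identification $\opH^m(G(q),L(\mu')) \cong \opH^m(G(q),L(\mu))$ is by the defining property of $q$-shifts: $\mu'|_{G(q)}$ and $\mu|_{G(q)}$ give the same $kG(q)$-module up to a Frobenius twist, and Frobenius is an automorphism of $G(q)$, yielding a semilinear isomorphism on cohomology.

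Next I would specialise to the first statement. If $\lambda = p\tau$ is $q$-restricted, then $\tau \in X^+_{r-1}$ and $p\tau$ has zero $0$th digit, so it is covered by the previous case, with the $q$-shift of choice being $\mu' = \tau$ (cyclic rotation moves the zero from position $0$ to the top). Independently, $L(p\tau) = L(\tau)^{[1]}$ yields a semilinear isomorphism $\opH^m(G(q),L(p\tau)) \cong \opH^m(G(q),L(\tau))$ via Frobenius, and likewise $\opH^m_\gen(G,L(\tau)) \cong \opH^m_\gen(G,L(p\tau))$ by twist-invariance of generic cohomology (Remarks \ref{adjustment}(b)). Between them sits the genuine $m$-genericity statement $\opH^m(G(q),L(\tau)) \cong \opH^m_\gen(G,L(\tau))$, which is exactly what Theorem \ref{lastcor} supplies for $p$ sufficiently large. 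Composing these three isomorphisms yields the displayed chain.

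The main obstacle is entirely upstream: it lies in the construction of Theorem \ref{thm6.2}(c) and Theorem \ref{lastcor} themselves. Those in turn combine the digit-bounding results of \S5 (Theorems \ref{digits} and \ref{digitsForG(q)}), the shifted generic Theorem \ref{shiftedGeneric}, and the large-prime vanishing of \cite[Thm. 7.5]{BNP01} in order to trade the ``$r \gg 0$'' hypothesis of Theorem \ref{shiftedGeneric} for a ``$p \gg 0$'' hypothesis that holds uniformly in $r$. Once those ingredients are available, the present corollary reduces to bookkeeping: identify the correct $q$-shift that brings the zero digit into canonical position and rewrite $L(p\tau)$ as $L(\tau)^{[1]}$ to pass between twisted and untwisted forms.
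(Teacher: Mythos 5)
Your proof is correct, but it takes a genuinely different route from the paper's.  The paper derives the corollary directly from Theorem~\ref{summary}, which it has just established: since $p\tau$ is $\widetilde W_p$-conjugate to $0$, the case ``$p$ large'' of that theorem gives $\opH^m(G(q),L(p\tau))\cong\opH^m(G,L(p\tau))$, and then Theorem~\ref{lastcor}(a) and twist-invariance of generic cohomology fill in the rest of the first displayed chain.  The second part is reduced to the first by observing that any $q$-restricted $\mu$ with a zero digit has a $q$-shift $\mu'=p\tau$.  You instead appeal to Theorem~\ref{thm6.2}(c), pairing the given weight against $L(0)$ and using the fact that $0$ has a zero digit in every position to satisfy the common-zero-digit hypothesis; this is a clean observation that the paper does not make explicit, and it is logically sound since neither Theorem~\ref{thm6.2}(c) nor Theorem~\ref{lastcor} depends on the corollary.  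The trade-off is essentially one of organization: the paper's route keeps the proof local to \S5 and its just-proved summary theorem (with \cite[Thm.~7.5]{BNP01} as the underlying input), whereas yours front-loads the \S6 machinery that the text points to as the ``more thorough'' treatment; both strategies bottom out in the same $\widetilde W_p$-conjugacy argument and large-$p$ bounds.  One small caveat, affecting both your write-up and the paper's terse proof alike: what Theorem~\ref{thm6.2}(c) actually furnishes is a chain terminating in $\opH^m_\gen(G,L(\mu'))\cong\opH^m(G,L(\mu'))$, whereas the corollary's second display interposes $\opH^m_\gen(G,L(\mu))$; since $\mu'$ is a cyclic $q$-shift rather than a $G$-module Frobenius twist of $\mu$, the identification $\opH^m_\gen(G,L(\mu))\cong\opH^m_\gen(G,L(\mu'))$ is not automatic and neither argument addresses it, so if you want the chain exactly as displayed you would need an extra word there.
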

 \begin{proof} For the first part, observe that $p\tau$ is $\widetilde W_p$-conjugate to 0. So the first part
 follows from Theorem \ref{summary}. For the second part, simply observe that if $\mu$ has a zero digit
(among its first $r$ digits), there is some $q$-shift $\mu'$ of $\mu$ with $\mu'=p\tau$ for some
dominant $\tau$. 
 \end{proof}
 
 \section{Large prime results}   In this section, we give some ``large prime" results. Much work has
 been done on this topic, see \cite{BNP01} and \cite{BNP02}, as well as earlier papers \cite{HHA84} and
 \cite{FP83}. 

The following result for $p\geq 3h-3$ is given in \cite[Cor. 2.4]{BNP02}.

\begin{thm}\label{Ext1digits} Assume $p\geq h$ and let $\lambda,\mu\in X^+_r$

(a) Suppose  $\Ext^1_G(L(\lambda),
L(\mu))\not=0$. Then $\lambda$ and $\mu$ differ in at most two digits, which must be adjacent.

(b) Let $q=p^r$. If $\Ext^1_{G(q)}(L(\lambda),L(\mu))\not=0$, then $\lambda$ and $\mu$ differ in at most 2 digits, which must be either adjacent, or the first and the last digits. \end{thm}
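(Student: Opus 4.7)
The plan for (a) is to invoke Remark~\ref{special} directly. That remark asserts that $\Ext^1_G(L(\lambda),L(\mu))\neq 0$ forces $\lambda$ and $\mu$ to differ in at most $2+2[\log_p(h-1)]$ digits, and moreover these differing digits fit inside a contiguous substring of the same length. Under the standing hypothesis $p\geq h$ we have $h-1<p$, so $[\log_p(h-1)]=0$; the bound collapses to two digits inside a substring of length $2$, which is precisely ``at most two, and they must be adjacent.'' No new work is needed here.

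For (b), the plan is to transport the $G(q)$-statement to a $G$-statement and then feed it to (a). First I would apply Theorem~\ref{existsAnF} with $m=1$ to obtain
\[
\Ext^1_{G(q)}(L(\lambda),L(\mu))\cong \Ext^1_G(L(\lambda),L(\mu)\otimes \sG_{r,b})
\]
for a suitable constant $b=b(\Phi,1)$. Using the $G$-filtration of $\sG_{r,b}$ from Lemma~\ref{filtrationlemma}, whose sections are $\nabla(\gamma)\otimes\nabla(\gamma^*)^{[r]}$ with $(\gamma,\alpha_0^\vee)\leq b$, a long exact sequence argument produces a particular section (and then, by passing to composition factors of $\nabla(\gamma)$ and $\nabla(\gamma^*)$, weights $\zeta,\zeta'$ that are $b$-small) such that
\[
\Ext^n_G\bigl(L(\lambda),\,L(\mu)\otimes L(\zeta)\otimes L(\zeta')^{[r]}\bigr)\neq 0
\quad\text{for some }n\in\{0,1\}.
\]

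The decisive point is that $L(\zeta')^{[r]}$ is the only source of ``digits at position $r$,'' which is exactly the mechanism producing the cyclic (first-and-last) case. I would now run a Lyndon--Hochschild--Serre spectral sequence for $G_r\triangleleft G$ applied to the right-hand tensor product, reducing the problem to two bite-size pieces: a $G_r$-factor involving $\lambda,\mu,\zeta$ (all digits in positions $0,\dots,r-1$) and a $G/G_r$-factor involving $\zeta'$ (supported after twisting at positions $\geq r$, i.e.\ cyclically at position $0$). Corollary~\ref{extForGr} and Lemma~\ref{tensors} take care of the $G_r$-piece: since $p\geq h$, all resulting auxiliary weights are $(h-1)$-small hence $(p-1)$-small, so the tensor identity of Lemma~\ref{tensors} trivializes their $G$-action, and the only obstruction to $L(\lambda)$ and $L(\mu)$ agreeing on a given digit comes from either a $\zeta$-contribution (at positions $0$ and possibly $1$) or a $\zeta'$-contribution after the $[r]$-twist (at ``position $r$,'' wrapping back to the complementary end). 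At this point, (a) (or the direct substring statement of Remark~\ref{special}) applied to the surviving pure $G$-extension pins the disagreement of $\lambda$ and $\mu$ to at most two digits, and the Steinberg decomposition shows these two digits must be either adjacent inside $\{0,\ldots,r-1\}$ or else split between positions $0$ and $r-1$.

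The main obstacle will be controlling how ``carries'' propagate under Steinberg tensor products when $h\leq p<2h-2$, since then a composition factor of $L(\mu_0)\otimes L(\zeta)$ need not be $p$-restricted and may perturb the digit at position $1$ as well as at position $0$. The delicate bookkeeping is to show that any such perturbation still lives in a length-two window of digits, and that a perturbation from the $L(\zeta')^{[r]}$ factor can only reach the other end of the cyclic string through digit $r-1$. Once this is verified, the two admissible patterns of differing digits---adjacent, or first-and-last---are the only ones compatible with $\lambda,\mu\in X_r^+$, completing (b).
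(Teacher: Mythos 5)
Part (a) of your plan coincides exactly with the paper's argument: apply Remark~\ref{special}, note that $p\geq h$ forces $[\log_p(h-1)]=0$, so the bound collapses to two digits confined to a substring of length two, i.e.\ adjacent. Nothing further is needed there.

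Part (b), however, is where the two arguments diverge, and your sketch has a genuine gap. The paper does not go back through Theorem~\ref{existsAnF} and a filtration-plus-LHS analysis; instead it simply cites the $\Ext^1$-shift isomorphism of \cite[Thm.~5.6]{BNP06} (recalled in Remark~\ref{needATwist}): for $r\geq 3$ and $p^r\geq h$ one has $\Ext^1_{G(q)}(L(\lambda),L(\mu))\cong\Ext^1_G(L(\lambda^{[e]_q}),L(\mu^{[e]_q}))$ with $e=[(r-1)/2]$. Since a $q$-shift is a cyclic rotation of the digit string, applying part (a) to $\lambda^{[e]_q},\mu^{[e]_q}$ and rotating back immediately yields the ``adjacent, or first-and-last'' conclusion. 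That one line replaces all the machinery in your outline.

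Your alternative route does not currently close. You invoke $\sG_{r,b}$ from Theorem~\ref{existsAnF} with $m=1$, which requires $b\geq 6h-2$ (or more), so the auxiliary weights $\zeta,\zeta'$ are only $b$-small. Under the hypothesis $p\geq h$ one does not get $\zeta$ to be $p$-restricted (that would need roughly $p>6h$), so $\zeta$ can contribute nontrivially to several low-order digit positions, not just ``position $0$ and possibly $1$'' as you assert. The $(h-1)$-smallness from Corollary~\ref{extForGr} controls the $G_r$-$\Ext$ weights, not the digits of $\zeta$ itself, so it does not rescue the bookkeeping. You correctly flag the ``carries'' problem as the main obstacle, but the sketch leaves it unresolved, and it is precisely the hard point: without pinning those perturbations to a length-two cyclic window, the conclusion of (b) does not follow. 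If you want to avoid the BNP06 citation, you would need to run the argument of Remark~\ref{special} from scratch in the $G(q)$-setting, which is closer in spirit to what BNP06 does; the filtration-based approach you propose is not obviously the right vehicle for a statement as sharp as ``at most two, cyclically adjacent.''
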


\begin{proof} For (a), we just need to apply Remark \ref{special} since $2+[\log_p(h-1)]=2$ in this case.
Part (b) follows from (a) and \cite[Thm. 5.6]{BNP06}. (Note that both (a) and (b) are trivial unless $r\geq 3$.)
 \end{proof}

The following result combines $q$-shifting with the themes of \cite[Thms. 7.5, 7.6]{BNP01}. There is some
overlap with the latter theorem, which is an analogue for $G(q)$ and all $m$ of Andersen's well-known
$m=1$ formula for $\Ext^1_G$ \cite[Thm. 5.6]{HHA84}. However,  we make no assumption, unlike \cite[Thm. 7.6]{BNP01}, that $\mu$ be sufficiently far from the walls of its
alcoves. For the $m=1$ case and $r\geq 3$, \cite[Thm. 5.6]{BNP02} gives a much sharper formula, as 
noted in Remark \ref{needATwist}.

\begin{thm}\label{thm6.2} (a) Assume that $p\geq 6m+7h-9$. Then for $q=p^r$ (any $r$), and $\lambda,\mu\in X^+_r$, 

\begin{equation}\label{thm6.2a}\Ext^m_{G(q)}(L(\lambda),L(\mu))\cong\bigoplus_{\nu}\Ext^m_G(L(\lambda)\otimes L(\nu),L(\mu)\otimes L(\nu)^{[r]}),
\end{equation}
where $\nu\in X^+$ runs over the dominant weights in the closure of the lowest $p$-alcove.  

(b)  Assume that $p> 12m +13h-16$, and $\lambda,\mu\in X^+_r$ with $\lambda$ having a zero digit.
Then  $\lambda, \mu$ can be replaced, maintaining the
dimension of the left-hand side of (\ref{thm6.2a}), by suitable simultaneous $q$-shifts $\lambda',\mu'$ so that the sum on the right-hand side of (\ref{thm6.2a}) collapses to a single summand,
\begin{equation}\label{collapse}
\Ext^m_{G(q)}(L(\lambda),L(\mu))\cong\Ext^m_{G(q)}(L(\lambda'),L(\mu'))\cong
\Ext^m_G(L(\lambda')\otimes L(\nu),L(\mu')\otimes L(\nu^*)^{[r]})\end{equation}
for some (constructively determined) $\nu$ in the lowest $p$-alcove. Also, $\lambda'$ can be chosen 
to be any $q$-shift whose first digit is zero (possibly with different weights $\nu$ for different choices
of $\lambda')$. In addition, $L(\lambda')\otimes L(\nu)$ and $L(\mu')\otimes L(\nu^*)^{[r]}$ are irreducible
in this case.

(c) Moreover, again with $p> 12m + 13h -16$ as in (b), assume that $\lambda,\mu\in X^+_r$ have a
common 0 digit (among the first $r$ digits, with $q=p^r$). Then simultaneous\footnote{See
the discussion above Theorem \ref{lastcor} for the precise definition of a simultaneous $q$-shift.} $q$-shifts $\lambda',\mu'$ may be chosen so that
\begin{equation}\label{collapse2}
\Ext^m_{G(q)}(L(\lambda),L(\mu))\cong\Ext^m_{G(q)}(L(\lambda'),L(\mu'))\cong\Ext^m_{G,\gen}(L(\lambda'),L(\mu'))
\cong \Ext^m_G(L(\lambda'),L(\mu')).\end{equation}
These isomorphisms all hold, for any simultaneous $q$-shifts $\lambda',\mu'$ of $\lambda,\mu$ 
for which $\lambda',\mu'$ both have a zero first digit.
  \end{thm}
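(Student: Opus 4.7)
My plan is to prove (a), (b), and (c) in order, each building on the previous under successively stronger hypotheses on $p$. The common starting point is Theorem \ref{existsAnF}, which with $b=6m+6h-8$ yields
\[\Ext^m_{G(q)}(L(\lambda),L(\mu))\cong\Ext^m_G(L(\lambda),L(\mu)\otimes\sG_{r,b}).\]
For (a), the hypothesis $p\geq 6m+7h-9=b+h-1$ is precisely the threshold forcing every $\gamma\in X^+$ with $(\gamma,\alpha_0^\vee)\leq b$ to lie in the closure of the lowest $p$-alcove, where $\nabla(\gamma)=L(\gamma)$. The filtration of $\sG_{r,b}$ from Lemma \ref{filtrationlemma} then has sections $L(\gamma)\otimes L(\gamma^*)^{[r]}$, each irreducible by Steinberg's tensor product theorem. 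I would analyze the resulting long exact sequences after tensoring with $L(\mu)$ and applying $\Ext^\bullet_G(L(\lambda),-)$. The vanishing statement of Theorem \ref{existsAnF} at degree $m+1$ for $\gamma$ outside the bound, combined with the linkage principle separating sections at distinct levels $(\gamma,\alpha_0^\vee)=b'$, should force these long exact sequences to degenerate into a direct sum; tensor-hom adjunction with the change of variable $\nu=\gamma^*$ then puts each summand into the stated form.

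For (b), first apply (a); the strengthened bound $p>12m+13h-16$ is chosen so that (a) remains applicable after Steinberg tensor products (which can roughly double the relevant weight bounds for the shifted modules). Use the zero digit hypothesis to choose a $q$-shift with $\lambda'_0=0$, so $\lambda'=p\widetilde{\lambda}$ and $L(\lambda')=L(\widetilde{\lambda})^{[1]}$. Since each $\nu$ in the lowest alcove is $p$-restricted, Steinberg's theorem gives $L(\lambda')\otimes L(\nu)=L(\nu+p\widetilde{\lambda})$, irreducible; similarly $L(\mu')\otimes L(\nu^*)^{[r]}=L(\mu'+p^r\nu^*)$ is irreducible. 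The strong linkage principle then forces the $\Ext^m_G$ between these two simples to vanish unless their highest weights are $W_p$-linked, and since $\nu$ is bounded by $p-h+1$ while $p$ is large, a direct linkage computation uniquely determines the contributing $\nu$ from $(\lambda',\mu')$, collapsing the sum to a single summand.

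For (c), a common zero digit lets us choose simultaneous $q$-shifts $\lambda',\mu'$ with $\lambda'_0=\mu'_0=0$, so $\lambda'=p\widetilde{\lambda}$, $\mu'=p\widetilde{\mu}$, and $L(\lambda')^*\otimes L(\mu')\cong V^{[1]}$ for $V=L(\widetilde{\lambda})^*\otimes L(\widetilde{\mu})$. I would then apply Theorem \ref{CPSK} directly to $V$: under $p>12m+13h-16$, the CPSK numerical conditions $e\geq e(ctm)$ and $f\geq f(c(\overline{\sigma}))$ for composition factors $L(\sigma)$ of $V$ (controlled via Lemma \ref{coarseCPSK}) are satisfied with $e=1$ and some finite $f$, yielding $\opH^m(G,V^{[1]})\cong\opH^m(G(p^{1+f}),V)$. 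Remark \ref{adjustment}(b) identifies this further with $\opH^m_\gen(G,V^{[1]})$ and with $\opH^m(G(q),V^{[1]})$; the remaining $\Ext^m_{G(q)}(L(\lambda),L(\mu))\cong\Ext^m_{G(q)}(L(\lambda'),L(\mu'))$ is immediate from shift-invariance of $G(q)$-restrictions.

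The main obstacle will be the direct-sum decomposition in (a). While the filtration of $\sG_{r,b}$ has explicit irreducible sections under the large-$p$ hypothesis, showing that $\Ext^m_G$ actually splits as a direct sum over the $\nu$'s rather than merely admitting a composition series requires a careful combined use of the full vanishing in Theorem \ref{existsAnF} and the linkage principle across the filtration. Once this is done, parts (b) and (c) reduce to largely bookkeeping under the Steinberg tensor product theorem and the CPSK generic-cohomology machinery.
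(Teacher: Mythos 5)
Your general plan for (a) and (b) is broadly consonant with the paper, but there are two points worth flagging, the second of which is a genuine gap.

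For (a), the clean fact that makes the sum on the right of (\ref{thm6.2a}) a genuine direct sum is not really a long-exact-sequence degeneration argument (and note that the vanishing in Theorem \ref{existsAnF} is for degrees $n\leq m$, not $m+1$, so one cannot invoke a degree-$(m+1)$ vanishing to splice sequences). The point the paper makes, and which you should use, is simpler: under $p\geq 6m+7h-9=b+h-1$, every $\gamma$ with $(\gamma,\alpha_0^\vee)\leq b$ lies in the closure of the lowest $p$-alcove, so all sections $\nabla(\gamma)\otimes\nabla(\gamma^*)^{[r]}$ are simple, and $\sG_{r,b}$ is in fact completely reducible as a $G$-module (pull back the corresponding complete reducibility of the $G\times G$-submodule of $k[G]$, which follows from linkage in each tensor factor). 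Once $\sG_{r,b}$ is a direct sum of the simples $L(\nu)\otimes L(\nu^*)^{[r]}$, the $\Ext^m$ decomposition is immediate; your approach ``should force'' the splitting but you never actually establish it. For (b), the ``direct linkage computation'' needs to be carried out: the precise statement is that two dominant weights in the closure of the lowest $p$-alcove that are $\widetilde W_p$-dot-conjugate must be equal, and proving this is exactly where the stronger bound $p>2b+h=12m+13h-16$ enters (write $\nu'=w\cdot\nu+p\tau$ and bound $|(p\tau,\alpha^\vee)|\leq 2b+h<p$ to force $\tau=0$). Your heuristic that the bound is chosen ``so that (a) remains applicable after Steinberg tensor products'' is not the actual reason.

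The real problem is (c). Your proposed route through Theorem \ref{CPSK} cannot deliver the stated result. CPSK produces an isomorphism $\opH^m(G,V^{[e]})\cong\opH^m(G(p^{e+f}),V)$ only once $f\geq f(c(\bar\sigma))$ for all weights $\sigma$ of $V$, and since $V=L(\lambda^\circ)^*\otimes L(\mu^\circ)$ can have $p^r$-restricted highest weights with $r$ arbitrary, Lemma \ref{coarseCPSK}(a) forces $f$ to be at least roughly $r+f_0$. Hence the power of $p$ that CPSK hands you is $p^{e+f}\gg p^r$, and no manipulation of Remark \ref{adjustment}(b) reduces it back to the actual $q=p^r$: that remark compares different twists at the same or larger group, it does not let you shrink the group. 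Theorem \ref{thm6.2}(c) holds for every $r\geq 1$ (only $p$ is assumed large), so the CPSK machinery — whose strength is in large $r$ — is the wrong tool. The paper instead continues the argument from (b): with $\lambda'$ and $\mu'$ both having first digit zero, the unique contributing $\nu$ is forced to be $\widetilde W_p$-conjugate to $0$, hence $\nu=0$; this collapses the right-hand side of (\ref{collapse}) to $\Ext^m_G(L(\lambda'),L(\mu'))$ at the given $q=p^r$, and the identification with $\Ext^m_{G,\gen}$ follows because the same $\nu=0$ conclusion persists when $r$ is replaced by $r+e_0$ for any $e_0\geq 0$. You should replace your CPSK argument in (c) with this one.
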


\begin{proof}We first prove (a).  Let $p\geq 6m+7h-9$. Put $b:=6m+6h-8$. Then if $\nabla(\nu)\otimes\nabla(\nu^*)^{[r]}$
is a section in $\sG_{r,b}$, we have 
\begin{equation}\label{smallpoint} (\nu+\rho,\alpha_0^\vee)\leq b + (\rho,\alpha_0^\vee)=
b+ h-1=6m+7h-9\leq p.\end{equation}
 Therefore, $\sG_{r,b}$ is completely reducible as a rational $G$-module
with summands $L(\nu)\otimes L(\nu^*)^{[r]}$,  in which $\nu\in X^+$ is in the closure of the lowest $p$-alcove.\footnote{For larger $p$, $\nu$ is actually in the interior of the lowest $p$-alcove. In particular, this applies
to parts (b) and (c) of the theorem.} 
(Of course, $\nabla(\nu)=L(\nu)$ for $\nu$ in the closure of the lowest $p$-alcove.) 
For $p>6m+7h-9$, there are dominant weights $\nu$ in the closure of the lowest $p$-alcove
which do not satisfy $(\nu,\alpha_0^\vee)\leq b.$ 
For such $\nu$, (\ref{vanishing}) gives that $\Ext^m_G(L(\lambda),L(\mu)\otimes L(\nu)\otimes L(\nu^*)^{[r]})=0$. Thus, (a) follows. 

To prove (b), assume that $p>12m + 13h-16= 2b + h$. Choose $e$ with $0\leq e<r$, so that $\lambda':=\lambda^{[e]_q}$ has its first digit equal to
zero. Put $\mu':=\mu^{[e]_q}$. Then the left-hand isomorphism in (\ref{collapse}) holds. In addition, 
the isomorphism (\ref{thm6.2a}) holds. 

There is an expression like (\ref{thm6.2a}) with $\lambda,\mu$ replaced by $\lambda',\mu'$.  If one of the terms indexed by $\nu$ on the right-hand side of (\ref{thm6.2a}) (for $\lambda',\mu'$) is nonzero,  then $\mu'$ is $\widetilde W_p$-conjugate to $\nu$. To see this,
first note that
$\lambda'=p\lambda^\dagger$ for some $\lambda^\dagger\in X^+$, so that $L(\lambda')\otimes L(\nu)$ is
irreducibile by the Steinberg tensor product theorem. Similarly, $L(\mu)\otimes L(\nu^*)^{[r]}$ is irreducible
since $\mu\in X^+_r$. If $\Ext^m_G(L(\lambda')\otimes L(\nu),L(\mu')\otimes L(\nu^*)^{[r]})\not=0$, then $\nu+p\lambda^\dagger$ is $W_p$-conjugate to $\mu'+p^r\nu^*$. Therefore, $\nu$ and
$\mu'$ are $\widetilde W_p$-conjugate.  Now it is only necessary to show that any two dominant
weights that are $b$-small and $\widetilde W_p$-conjugate are equal. Briefly, suppose that $\nu,\nu'$
are $\widetilde W_p$-dot conjugate dominant weights in the lowest $p$-alcove. Write
$\nu'=w.\cdot\nu+ p\tau$, for $w\in W$, $\tau\in X$. If $\tau=0$, then $\nu=\nu'$ because both weights are dominant. Hence, $\tau\not=0$. Thus, for $\alpha\in\Pi$, $|(\nu'-w\cdot\nu,\alpha^\vee)|
\leq |(\nu'+\rho,\alpha^\vee)| + |(w(\nu+\rho),\alpha^\vee)| \leq 2b+h$ by the first part of (\ref{smallpoint}). 
But $2b+ h< p$. 
  Choose $\alpha\in\Pi$ such that $(\tau,\alpha^\vee)\not=0$.
It follows then that $|(\tau,\alpha^\vee)|=1$, an evident contradiction. 
 This completes the proof of (b).\footnote{Apart from the use of $q$-shifts and different
numerical bounds, the argument in this paragraph parallels that of \cite[Thm. 7.5]{BNP01}.}$^,$
\footnote
{Conceptually, the lowest $p$-alcove $C:=\{\lambda\in {\mathbb R}\otimes X\,|\, 0\leq(\lambda+\rho,\alpha_0^\vee)<p\}$ is the union of closed simplices conjugate under the finite
subgroup $N$ of $\widetilde W_p$ stabilizing $C$. The group $N$ acts transitively and regularly on the interiors of these simplices. With our assumptions on the sizes of the various $(\nu,\alpha_0^\vee)$,
the relevant dominant $\nu$ all belong to the interior of the ``lowest" simplex---the one containing 0.}

Finally, to prove (c), we continue the proof of (b), taking $\mu'$ to also have a zero first digit. Put $\mu'=p\mu^\dagger$, for some $\mu^\dagger\in X^+$. So $\mu'$, and thus $\nu$ above, must be $\widetilde W_p$-conjugate to 0. Therefore $\nu=0$, giving all the isomorphisms in (\ref{collapse2}) except the ones involving $\Ext^m_{G,\gen}$. We observe that the $\widetilde W_p$-conjugacy of $\nu$ to $0$ still holds passing 
from $r$ to $r+e_0$ for any integer $e_0\geq 0$. Thus, $\Ext^m_{G(q)}(L(\lambda'),L(\mu'))
\cong \Ext^m_{G}(L(\lambda'),L(\mu'))$ and $\Ext^m_{G(p^{r+e_0})}(L(\lambda'),L(\mu'))
\cong\Ext^m_G(L(\lambda'), L(\mu'))$,
 so $$\Ext^m_G(L(\lambda'),L(\mu'))\cong\Ext^m_{G,\gen}(L(\lambda'),L(\mu')).$$
  This proves (c).
\end{proof}

\begin{rems}\label{rem6.3} (a) In the situation of (b), suppose it is $\lambda$ itself that has first digit 0, and put $\lambda'=\lambda$ and $\mu'=\mu$. Assume that $\Ext^m_G(L(\lambda),L(\mu))\not=0$. Then the conclusion of (c) holds. To see this, note that the non-vanishing implies $\lambda$ and $\mu$ are $W_p$-conjugate, and
thus $\widetilde W_p$-conjugate. Now, we can use the proofs of (b) and (c).

(b) The requirements in (b) and (c) on the existence of a simultaneous zero cannot be dropped. An example is provided in Remark \ref{needATwist}.

(c) The sum on the right-hand side of (\ref{thm6.2a}) always involves a bounded number of
summands, independent of $p$, namely, those in which $(\nu,\alpha_0^\vee)\leq 6m+6h-8$. 

\end{rems}

 Taking $\nu=0$ in  (\ref{thm6.2a}) yields the following result.
 
\begin{cor}\label{cor6.4} If $p\geq 6m+7h-9$, then restriction $\Ext^m_{G}(L(\lambda),L(\mu))\to \Ext^m_{G(q)}( L(\lambda),
L(\mu))$
is an injection for every $\lambda,\mu\in X^+_r$. \end{cor}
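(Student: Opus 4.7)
The plan is to extract the conclusion directly from Theorem~\ref{thm6.2}(a) by identifying the $\nu=0$ summand in the decomposition (\ref{thm6.2a}) with the image of the restriction map, and then to observe that the inclusion of this summand is obviously a split injection.

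First I would unpack the construction of the isomorphism (\ref{thm6.2a}). Recall from Theorem~\ref{existsAnF} that
\[
\Ext^m_{G(q)}(L(\lambda),L(\mu))\cong \Ext^m_G(L(\lambda),L(\mu)\otimes\sG_{r,b})
\]
for $b\geq 6m+6h-8$. The isomorphism arises from Frobenius reciprocity applied to the unit $k\hookrightarrow \sG_r(k)=\ind_{G(q)}^Gk$, together with the tensor identity $\ind_{G(q)}^G(V|_{G(q)})\cong V\otimes\sG_r(k)$, so that the restriction map
\[
\Ext^m_G(L(\lambda),L(\mu))\longrightarrow \Ext^m_{G(q)}(L(\lambda),L(\mu))
\]
is identified with the map on $\Ext^m_G(L(\lambda),L(\mu)\otimes -)$ induced by the inclusion $k\hookrightarrow\sG_{r,b}$ of the unit (which is exactly the $\gamma_0=0$ section at the bottom of the filtration $\mF_\bullet$).

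Next, as in the proof of Theorem~\ref{thm6.2}(a), the hypothesis $p\geq 6m+7h-9$ together with (\ref{smallpoint}) forces every section $\nabla(\nu)\otimes\nabla(\nu^*)^{[r]}$ appearing in $\sG_{r,b}$ to satisfy $(\nu+\rho,\alpha_0^\vee)\leq p$. Consequently $\sG_{r,b}$ is completely reducible as a rational $G$-module, splitting as
\[
\sG_{r,b}\;\cong\;\bigoplus_{\nu}L(\nu)\otimes L(\nu^*)^{[r]},
\]
with $\nu=0$ yielding the summand $L(0)\otimes L(0)^{[r]}\cong k$. Applying $\Ext^m_G(L(\lambda),L(\mu)\otimes-)$ turns the canonical inclusion $k\hookrightarrow\sG_{r,b}$ into the inclusion of the $\nu=0$ summand in (\ref{thm6.2a}); this inclusion is evidently split (since $k$ is a direct summand of $\sG_{r,b}$), hence injective.

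The only genuinely substantive step is the identification of the restriction map with the map induced by $k\hookrightarrow\sG_{r,b}$; but this is standard Frobenius reciprocity and is implicitly used throughout Section~4. Everything else is a direct reading of the $\nu=0$ term in Theorem~\ref{thm6.2}(a).
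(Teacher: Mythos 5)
Your argument is correct and is essentially the paper's proof, just spelled out more explicitly: the paper's one-line justification (``take $\nu=0$ in (\ref{thm6.2a})'') silently uses exactly the identification you make, namely that the restriction map corresponds under Frobenius reciprocity to the map induced by the split inclusion $k=\sG_{r,0}\hookrightarrow\sG_{r,b}$. Nothing is missing and no alternative route is being taken.
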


Suppose that $\lambda$ and $\mu$ are $q$-restricted weights. Let us call a pair $(\lambda',\mu')$ a $q$-shift of $(\lambda,\mu)$ if it is obtained by a 
simultaneous $q$-shift $\lambda'=\lambda^{[e]_q}$, $\mu'=\mu^{[e]_q}$.  Also, we say the pair $(\lambda,\mu)$ of $q$-restricted
weights is $m$-generic at $q$ if
 $$\Ext^m_{G(q)}(L(\lambda),L(\mu))\cong\Ext^m_G(L(\lambda),L(\mu)).$$
Similarly, we say the pair $(\lambda,\mu)$ is shifted $m$-generic if $(\lambda',\mu')$ is  $m$-generic
at $q$ for a $q$-shift $(\lambda',\mu')$ of $(\lambda,\mu)$. Theorem \ref{thm6.2}(c) asserts that for
$p$ large, pairs $(\lambda,\mu)$ are shifted $m$-generic at $q$. We improve this in the case that
a zero digit occurs as the last digit. At the same time, we give an improvement, in the large prime case, to
the limiting procedure of \cite{CPSK77} in the result below. Part (a) is already implicit in \cite{CPSK77}
with a different bound, but (b) and (c) are new and at least theoretically interesting,  since they give
the best possible value for the increase required in $q$ to obtain stability; see the examples in
Remark \ref{lastremark} which follows.

\begin{thm}\label{lastcor} 
Assume that $p>12m + 13h-16$, and let $\lambda,\mu\in X^+_r$. Let $q=p^r$.

(a) The semilinear map 
$$\Ext^m_G(L(\lambda)^{[1]},L(\mu)^{[1]})\to\Ext^m_G(L(\lambda)^{[e]},L(\mu)^{[e]})$$
is an isomorphism for every $e\geq 1$. In particular, 
$$\Ext^m_{G,\gen}(L(\lambda),L(\mu))\cong \Ext^m_G(L(\lambda)^{[1]},L(\mu)^{[1]}).$$

(b)
Put 
$q'=p^{r+1}$.  Then the $q'$-shifts $\lambda'=\lambda^{[1]_{q'}}$, and $\mu'=\mu^{[1]_{q'}}$ satisfy
$$\Ext^m_{G(q')}(L(\lambda),L(\mu))\cong\Ext^m_{G(q')}(L(\lambda'),L(\mu'))\cong\Ext^m_{G,\gen}(L(\lambda'),L(\mu'))\cong\Ext^m_G(L(\lambda'),L(\mu')).$$
In addition, for any $s\geq 1$, 
the map 
$$\Ext^m_{G(p^{r+s})}(L(\lambda'),L(\mu'))\to\Ext^m_{G(q')}(L(\lambda'),L(\mu'))$$
is an isomorphism as is 
$$\Ext^m_{G(p^{r+s})}(L(\lambda),L(\mu))\to\Ext^m_{G(q')}(L(\lambda),L(\mu)).$$
In particular, the pair $(\lambda,\mu)$ is always $m$-generic at $q'=p^{r+1}$.

(c)  Let $M,N$ be finite dimensional rational $G$-modules whose composition factors 
are all $p^r$-restricted. 
Then, if $s\geq 1$, the natural restriction map 
$$\Ext^n_{G,\gen}(M,N)\to \Ext^n_{G(p^{r+s})}(M,N)$$
is an isomorphism for $n\leq m$ and an injection for $n=m+1$.
\end{thm}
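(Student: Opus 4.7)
My plan is to prove (a), (b), and (c) in sequence, with each part building on Theorem \ref{thm6.2}(c).

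For (a), apply Theorem \ref{thm6.2}(c) to $(p^e\lambda, p^e\mu)$ viewed as $p^{r+e}$-restricted weights; for $e \geq 1$ these have zero first digit, so the theorem applies and gives $\Ext^m_G(L(\lambda)^{[e]}, L(\mu)^{[e]}) \cong \Ext^m_{G,\gen}(L(\lambda)^{[e]}, L(\mu)^{[e]})$. The semilinear invariance of generic cohomology under Frobenius twist, together with naturality, identifies the rational $\Ext^m_G$-groups at different values of $e$ through the asserted semilinear iso. The case $e = 1$ yields the second displayed identity.

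For (b), regard $\lambda, \mu \in X^+_r$ as $q'$-restricted with $q' = p^{r+1}$; their zero $r$-th digit provides the common zero digit required by Theorem \ref{thm6.2}(c). The $q'$-shift $\lambda' = p\lambda$, $\mu' = p\mu$ places this zero at the front, and Theorem \ref{thm6.2}(c) yields the displayed chain of isomorphisms (the first being the definition of the $q'$-shift: $L(\lambda)^{[1]}|_{G(q')} \cong L(\lambda')|_{G(q')}$). The $s \geq 1$ restriction isomorphisms are obtained by reapplying Theorem \ref{thm6.2}(c) at $q = p^{r+s}$; both sides identify with the common $\Ext^m_G(L(\lambda'), L(\mu'))$, and naturality of restriction then yields the iso.

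For (c), first reduce to the case $M = L(\lambda)$, $N = L(\mu)$ simple via the standard induction on composition length, using the Five Lemma applied to the long exact sequences of $\Ext^\bullet(-, N)$ (then $\Ext^\bullet(M,-)$) in both $\Ext_{G,\gen}$ and $\Ext_{G(p^{r+s})}$; the pair (iso at $n \leq m$, injection at $n = m+1$) propagates through each short exact sequence. In the simple case with $\lambda, \mu \in X^+_r$, the iso at $n \leq m$ follows from (b) applied at $q = p^{r+s}$, combined with the semilinear Frobenius-twist invariance of generic cohomology. For the injection at $n = m+1$, consider the short exact sequence $0 \to k \to \sG_{r+s} \to \sG_{r+s}/k \to 0$, tensor with $L(\mu')$ (where $(\lambda', \mu')$ is the $p^{r+s}$-shift placing zero digits at the front), and apply $\Ext^\bullet_G(L(\lambda'),-)$. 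The resulting long exact sequence reduces the injection to the vanishing of $\Ext^m_G(L(\lambda'), L(\mu') \otimes (\sG_{r+s}/k))$. Split $\sG_{r+s}/k$ via $0 \to \sG_{r+s,b}/k \to \sG_{r+s}/k \to \sG_{r+s}/\sG_{r+s,b} \to 0$ with $b = 6m + 6h - 8$: Theorem \ref{existsAnF} provides the vanishing on the right; the assumption $p > 12m + 13h - 16 > 2b + h$ makes $\sG_{r+s,b}$ completely reducible with summands $L(\nu) \otimes L(\nu^*)^{[r+s]}$ for $\nu$ in the closure of the lowest alcove, and the $\widetilde W_p$-linkage argument from the proof of Theorem \ref{thm6.2}(c) (which forces $\nu = 0$ for non-vanishing, since $\lambda', \mu' \in pX^+$) shows the left-end Ext vanishes in all degrees. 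Translating the resulting injection $\Ext^{m+1}_G(L(\lambda'), L(\mu')) \hookrightarrow \Ext^{m+1}_{G(p^{r+s})}(L(\lambda'), L(\mu'))$ back via the Frobenius-twist identifications delivers the desired injection.

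The main obstacle will be careful bookkeeping of the many semilinear isomorphisms, to verify that their composition really yields the natural restriction $\Ext^n_{G,\gen}(M, N) \to \Ext^n_{G(p^{r+s})}(M, N)$ rather than some unrelated abstract isomorphism.
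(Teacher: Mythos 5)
Your overall strategy matches the paper's: Theorem \ref{thm6.2}(c) is the engine for all three parts, with a Five Lemma reduction to simple modules in (c), and careful tracking of shifts and twists throughout. The differences are worth flagging.

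For (a), you apply Theorem \ref{thm6.2}(c) directly to $(p^e\lambda,p^e\mu)$ for each $e\geq1$, while the paper derives (a) \emph{from} part (b). Either route gives dimension agreement between $\Ext^m_G(L(\lambda)^{[e]},L(\mu)^{[e]})$ and the (twist-independent) $\dim\Ext^m_{G,\gen}(L(\lambda),L(\mu))$. But to promote this dimension equality to the assertion that the \emph{specific} Frobenius-twist map $\Ext^m_G(L(\lambda)^{[1]},L(\mu)^{[1]})\to\Ext^m_G(L(\lambda)^{[e]},L(\mu)^{[e]})$ is an isomorphism, you need the a priori \emph{injectivity} of that twist map. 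This is exactly the input the paper pulls in (the reference \cite{Dect}, also appearing as \cite{CPS83} in Remark \ref{adjustment}(b)); your phrase ``semilinear invariance of generic cohomology under Frobenius twist, together with naturality'' glides over it. Without the injectivity, one only gets that an abstract isomorphism exists, not that the named semilinear map realizes it.

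For (b), your argument is the paper's, down to the commutative-diagram transitivity bookkeeping implicit in your ``naturality of restriction.''

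For (c), you re-derive the key degree-$(m+1)$ injection from scratch via the short exact sequence $0\to k\to\sG_{r+s}\to\sG_{r+s}/k\to 0$, the complete reducibility of $\sG_{r+s,b}$ for these large $p$, the $\widetilde W_p$-linkage argument forcing $\nu=0$, and the vanishing in Theorem \ref{existsAnF}. This re-proves Corollary \ref{cor6.4} (in the form needed, for $(\lambda',\mu')$ with zero first digit), which the paper simply invokes with $m$ replaced by $m+1$. Your route is more self-contained but logically equivalent. In both proofs the remaining delicate step is the same: showing that the injection $\Ext^{m+1}_G(L(\lambda'),L(\mu'))\hookrightarrow\Ext^{m+1}_{G(p^{r+s})}(L(\lambda'),L(\mu'))$, after untwisting, really is the \emph{natural restriction} out of $\Ext^{m+1}_{G,\gen}(L(\lambda),L(\mu))$ and not merely an abstract injection of spaces with a common upper bound on dimension. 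You flag this as ``the main obstacle,'' and rightly so --- the paper relegates it to ``assuming (a)'' together with the injectivity of twist maps, and a fully explicit treatment would have to unwind the stable inverse limit defining $\Ext^{m+1}_{G,\gen}$ and check the relevant triangle commutes. So: same architecture, with (c) redeveloped from first principles, and one citation (twist-map injectivity) missing from (a) that is needed to make your naturality claim precise.
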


\begin{proof} We begin by remarking that (c) follows from (a) and (b): Note that Corollary \ref{cor6.4} applies with $m$ replaced by $m+1$,
checking the required condition on $p$. Applied to $(p\lambda,p\mu)$ and assuming (a), Corollary \ref{cor6.4}
gives an injection $$\Ext^{m+1}_{G,\gen}(L(p\lambda),L(p\mu))\to\Ext^{m+1}_{G(q')}(L(p\lambda),L(p\mu)).$$
It follows that there  is a corresponding injection with $(p\lambda,p\mu)$ are replaced by $(\lambda,\mu)$. Now (c)
follows from this latter injection and 
the last isomorphism in (b), valid also with $m$ replaced by
 any smaller integer. (This is a well-known 5-lemma argument needing only the
injectivity for the degree $m+1$-maps.) 

So it remains to prove (a) and (b).  The first display in (b) follows from Theorem 6.2(c). We get
a similar string of isomorphisms with $q'$ replaced by $p^{r+s}$. Note that $\lambda^{[1]_{p^{r+s}}}
=\lambda^{[1]_{q'}}=\lambda'$, with a similar equation for $\mu'$. (We use here the fact that $\lambda,\mu$
are $q$-restricted.) Now consider the the following commutative diagram, where the vertical maps
are restriction maps:
\[
\begin{CD}
\Ext^m_{G(p^{r+s})}(L(\lambda),L(\mu)) @>\sim>> \Ext^m_{G(p^{r+s})}(L(\lambda'),L(\mu'))@>\sim>>
\Ext^m_{G,\gen}(L(\lambda),L(\mu))\\
@VVV @VVV @|\\
\Ext^m_{G(q')}(L(\lambda),L(\mu)) @>\sim>>  \Ext^m_{G(q')}(L(\lambda'),L(\mu'))@>\sim>>\Ext^m_{G,\gen}(L(\lambda),L(\mu)) 
\end{CD}
\]
It follows easily that two vertical maps (on the left) are isomorphisms, as required. This proves (b). 

To prove (a), note that $\dim\Ext^m_G(L(\lambda)^{[1]},L(\mu)^{[1]})=\dim\Ext^m_{G,\gen}(L(\lambda'),L(\mu'))$ (by the first isomorphism in (b)) which equals $\dim\Ext^m_{G,\gen}(L(\lambda),L(\mu))$ by
definition. However, the map
$$\Ext^m_G(L(\lambda)^{[1]}, L(\mu)^{[1]})\to\Ext^m_{G,\gen}(L(\lambda),L(\mu))$$
is injective by \cite{Dect}. So,
by dimension considerations, it must be an isomorphism. Part (a) follows easily.
\end{proof}
\begin{rem}\label{lastremark} For an example of a pair $(\lambda,\mu)$ of $q$-restricted weights that is not $m$-generic or shifted $m$-generic at $q$ with
$m=2$, see Remark \ref{needATwist}. Even when $p$ is large, there are examples for fixed $\Phi$ (of
type $A_1$) and fixed $m$ ($=2$) for arbitrarily large $r$. Of course, these examples have no zero digits
in common.
\end{rem}
 
 \medskip\medskip
 \begin{center} {\bf Appendix} \end{center}
 
 \medskip
 
 In this brief appendix, we consider the large prime generic cohomology results of \cite[\S3]{FP86} from the
 point of view of Theorem \ref{lastcor} and the other results of this paper. 
  
  Suppose $q=p$ is prime and let $\mu\in X^+_1$. Assume $p> 12m + 13h- 16=2b+h$, as in Theorem \ref{thm6.2}(b). Taking 
$\lambda=0$, we thus get
\begin{equation}\label{appendixequal}
\opH^m(G(p), L(\mu))\cong\Ext^m_G(L(\nu),L(\mu)\otimes L(\nu^*)^{[1]}),\end{equation}
for some $b$-small $\nu\in X^+$. 

Necessarily $\mu+p\nu^*$ lies in the Jantzen region. If $\mu+p\nu^*\not\in W_p\cdot\mu$, then (\ref{appendixequal})
vanishes. Otherwise, take $p$ larger, if necessary, so 
  that the Lusztig character formula holds for $G$. Then the dimension of (\ref{appendixequal}) equals the coefficient of a  Kazhdan-Lusztig polynomial, since $L(\nu)\cong \Delta(\nu)$; 
see \cite[\S3]{CPS93}.\footnote{As noted in \cite[Thm. 7.5]{BNP01} and its proof, which also give
the above isomorphism for large $p$, the right-hand side can be converted to a cohomology group (of an
irreducible module)
via a translation taking $\nu$ to $0$.  Thus, the relevant Kazhdan-Lusztig polynomial has the form $P_{w_0,w_0w}$, where 
$l(w_0w)=l(w_0) + l(w)$, $w\in W_p$.}
The Lusztig
character formula is known to  hold when $p\gg h$ \cite{AJS94}, and is conjectured to be true
for $p\geq h$.  
 
 In \cite[Prop. 3.2]{FP86}, it was observed that $\opH^m(G(p), L(\mu))\cong H^m_\gen(G,L(\mu))$ 
 if $p$ is sufficiently large, depending on the highest weight $\mu$ and the integer $m$. More precisely, in
 this case, taking $\mu$ to lie in the closure of the  lowest $p$-alcove,
 \cite[Thm. 3.3, Cor. 3.4]{FP86} gives a dimension formula, valid when $p$ is large,
 \begin{equation}\label{formula}
 \dim\opH^m(G(p),L(\mu))=\begin{cases} 0\,\,\, m\,{\text{\rm odd},}\\
 \sum_{w\in W} \det(w){\mathfrak p}_{m/2}(w\cdot\mu)\,\,\, m\,\,{\text{\rm even}}.\end{cases}\end{equation}
 Here $\mathfrak p$ is the Kostant partition function. It is interesting to rederive this result in our
 present context, and compare it with (\ref{appendixequal}).
 
 In addition to assuming that $p> 2b+h$, also assume that $p\geq(\mu,\alpha_0^\vee)
 +h-1$.
 (The last condition just says $\mu$ lies in the closure
  of the lowest $p$-alcove, so that $L(\mu)\cong\nabla(\mu)$.) Theorem \ref{lastcor}(a) shows that
  $\opH^m_\gen(G,L(\mu))\cong\opH^m(G,L(\mu)^{[1]})\cong\opH^m(G,\nabla(\mu)^{[1]})$. A formula
  for the dimension of the latter module is derived in \cite[Prop. 4.2]{CPS09}, which
  is precisely that given in the right-hand side of (\ref{formula}).
  
  Under the same conditions on $p$ as in the previous paragraph, but perhaps enlarging $p$ further,  we claim there is an identification of $\opH^m_\gen(G,L(\mu))$ 
 with $\opH^m(G(p^r),L(\mu))$, for all positive integers $r$. For $r\geq 2$, this claim follows from Theorem \ref{lastcor}(c). 
 For the case $r=1$, we also require $\mu$ to be $b$-small, a condition on $p$ when $\mu$ is fixed.
 
Return now to (\ref{appendixequal}) in the case of a $b$-small $\mu\in X^+$.  By the argument for Theorem \ref{thm6.2}(b), there is no other $b$-small dominant weight $\widetilde W_p$-conjugate of $\mu$. Thus, we can assume $\nu=\mu$. Next, consider
 $\opH^m_\gen(G,L(\mu))\cong\opH^m(G,L(\mu)^{[1]})$ by Theorem \ref{lastcor}(a). If this generic cohomology
 is non-zero, then $\mu$ belongs to the root lattice. If translation to the principal 
 block is applied to
 $L(\mu)\otimes L(\mu^*)^{[1]}$, using \cite[Lemma 3.1]{CPS09}, we obtain an irreducible
  module $L(\tau)\otimes L(\mu^*)^{[1]}$ with $\tau$  in 
   lowest $p$-alcove, and with highest weight also in $W_p\cdot 0$. 
  Therefore, $\tau=0$.  Clearly, translation to the principal block takes $L(\mu)$ to $L(0)$. Thus,
 $$\begin{aligned} \opH^m(G(p),L(\mu))& \cong\Ext^m_G(L(\mu),L(\mu)\otimes L(\mu^*)^{[1]})\\
 &\cong\Ext^m_G(L(0),L(\mu^*)^{[1]})\\
& \cong \opH^m(G,L(\mu^*)^{[1]})\\
& \cong \opH^m(G, L(\mu)^{[1]})\\ &\cong \opH^m_\gen(G, L(\mu))\end{aligned}
$$ in this case. If $\opH^m_\gen(G,L(\mu))=0$, we claim that $H^m(G(p),L(\mu))=0$ also. Otherwise,
$$\Ext^m_G(L(\mu),L(\mu)\otimes L(\mu^*)^{[1]})\not=0.$$ Thus, $\mu$ and $\mu+p\mu^*$ belong to the
same $W_p$-orbit, forcing $\mu^*$ to belong to the root lattice, again giving (by translation arguments) $\opH^m(G(p),L(\mu))=\opH^m_\gen(G,L(\mu))=0$, a contradiction. This completes the proof of the claimed
identification.

Finally, observe the answer we  obtained for $\dim\opH^m(G(p),L(\mu))$, for our rederivation of 
(\ref{formula}),
is, when $\mu$  is $b$-small
and lies in
the root lattice,  a Kazhdan-Lusztig
polynomial coefficient. (The dimension is independent of $p$ and
has the form $\dim\Ext^m_G(\Delta(0),L(p\mu))$ with $p\mu$ in the Jantzen region.)  In this case,
the Kazhdan-Lusztig polynomial coefficient that gives the right-hand side of (\ref{appendixequal}) is associated
to $\Ext^m_G(\Delta(0),L(p\mu^*))$, which is the same coefficient. (Apply a graph automorphism.) When
$p\mu$ is not in the root lattice, $\mu+p\mu^*$ is not in $W_p\cdot\mu$, and so the right-hand side of (\ref{appendixequal}) is zero, as is $\dim\Ext^m_G(\Delta(0), L(p\mu))$. Consequently, the combinatorial
determination of (\ref{appendixequal}) is, for $b$-small $\mu$, the same  as that for (\ref{formula}), if Kazhdan-Lusztig polynomial coefficients are used in both cases.

But the determination of the dimension of (\ref{appendixequal}) as a Kazhdan-Lusztig polynomial coefficient or zero applies for all restricted
$\mu$, if $p\gg0$, not just for those that are $b$-small or lie in the closure of the bottom $p$-alcove. 
Thus, in some sense, the discussion above may be viewed as giving a generalization of the determination in \cite{FP86} of
$\opH^m(G(p),L(\mu))$ for $p$ sufficiently large, depending on $m$ and $\mu$.

{\footnotesize
\bibliographystyle{alpha}
\bibliography{PSbib.bib}}
\end{document}